\documentclass[11pt,a4paper, oneside]{amsart}
\usepackage[dvipsnames]{xcolor}
\usepackage{braket,comment,soul}
\usepackage{hyperref}
\usepackage{pdfpages,faktor}
\usepackage{graphicx,mathabx,bbm,url}
\usepackage{caption}
\usepackage{subcaption}
\usepackage{mathrsfs} 
\usepackage{tikz-cd} 
\usepackage{bm}
\usepackage{enumitem}
\usepackage{mathtools}

\usepackage{geometry}
\usepackage{amssymb,amsmath,amsfonts,amsthm}

\newtheorem{thm}{Theorem}[section]
\newtheorem{cor}[thm]{Corollary}
\newtheorem{lemma}[thm]{Lemma}
\newtheorem{prop}[thm]{Proposition}

\newtheorem*{mainthm}{Main Theorem}

\theoremstyle{definition}
\newtheorem{defn}[thm]{Definition}
\newtheorem{question}[thm]{Question}

\newtheorem{rem}[thm]{Remark}

\theoremstyle{remark}

\DeclareMathOperator{\Int}{Int}

\DeclareMathOperator{\diam}{diam}
\DeclareMathOperator{\Aut}{Aut}
\DeclareMathOperator{\Crit}{Crit}

\numberwithin{equation}{section}

\newcommand{\A}{\mathbb{A}}
\DeclareMathOperator{\Id}{Id}
\newcommand{\Mod}{\operatorname{mod}}
\def\B{{\mathcal{B}}}
\def\D{{\mathbb{D}}}
\def\N{{\mathbb{N}}}
\def\Z{{\mathbb{Z}}}

\def\cD{\mathcal {D}}
\def\cV{\mathcal {V}}
\def\cW{\mathcal {W}}
\def\cE{\mathcal {E}}
\def\cR{\mathcal {R}}

\def\tG{\widetilde{\Gamma}}

\def\Ss{\mathcal{S}}

\renewcommand{\geq}{\geqslant}
\renewcommand{\leq}{\leqslant}
\renewcommand{\ge}{\geqslant}
\renewcommand{\le}{\leqslant}

\newcommand{\frakC}{\mathfrak{C}}

\renewcommand{\hat}{\widehat}

\newcommand{\eps}{\varepsilon}
\renewcommand{\epsilon}{\varepsilon}

\makeatletter
\DeclareFontFamily{U}{tipa}{}
\DeclareFontShape{U}{tipa}{m}{n}{<->tipa10}{}
\newcommand{\arc@char}{{\usefont{U}{tipa}{m}{n}\symbol{62}}}%

\newcommand{\arc}[1]{\mathpalette\arc@arc{#1}}

\newcommand{\arc@arc}[2]{%
  \sbox0{$\m@th#1#2$}%
  \vbox{
    \hbox{\resizebox{\wd0}{\height}{\arc@char}}
    \nointerlineskip
    \box0
  }%
}
\makeatother

\newcommand{\Kstar}{{\mathscr K_\star}}
\newcommand{\Gstar}{\mathscr G_\star}

\newcommand{\parab}{{\text{par}}}
\newcommand{\attr}{{\text{attr}}}
\newcommand{\ovl}{\overline}

\newcommand{\disk}{\mathbb{D}}
\newcommand{\C}{\mathbb{C}}
\newcommand{\K}{\mathscr{K}}
\newcommand{\Cc}{\widehat{{\C}}}
\newcommand{\Circle}{{{\mathbb S}^1}}
\newcommand{\sm}{\setminus}
\newcommand{\tT}{\widetilde{T}}
\renewcommand{\tilde}{\widetilde}
\newcommand{\clo}{{\text{cl}}\,\,}

\renewcommand{\phi}{\varphi}

\DeclareFontFamily{U}{mathb}{\hyphenchar\font45}
\DeclareFontShape{U}{mathb}{m}{n}{ <5> <6> <7> <8> <9> <10> gen * mathb <10.95> mathb10 <12> <14.4> <17.28> <20.74> <24.88> mathb12 }{}
\DeclareSymbolFont{mathb}{U}{mathb}{m}{n}
\DeclareFontSubstitution{U}{mathb}{m}{n}
\DeclareMathSymbol{\righttoleftarrow}{3}{mathb}{"FD}
\DeclareMathSymbol{\selfmap}{3}{mathb}{"FD}

\title[Transcendental correspondences along basins]
{Transcendental correspondences: when Fuchsian groups take over basins of entire maps}

\begin{author}[K.~Drach]{Kostiantyn Drach}
\address{Departament de Matem{\`a}tiques i Inform{\`a}tica, Universitat de Barcelona, Gran Via 585, 08007, Barcelona, Spain, and Centre de Recerca Matem{\`a}tica, Edifici C, Carrer de l'Albareda, Bellaterra, Barcelona, Spain}
\email{kostiantyn.drach@ub.edu}
\end{author}

\begin{author}[S.~Mukherjee]{Sabyasachi Mukherjee}
\address{School of Mathematics, Tata Institute of Fundamental Research, 1 Homi Bhabha Road, Mumbai 400005, India}
\email{sabya@math.tifr.res.in}
\end{author}

\begin{author}[L.~Pardo-Sim\'on]{Leticia Pardo-Sim\'on}
\address{Departament de Matem{\`a}tiques i Inform{\`a}tica, Universitat de Barcelona, Gran Via 585, 08007, Barcelona, Spain, and Centre de Recerca Matem{\`a}tica, Edifici C, Carrer de l'Albareda, Bellaterra, Barcelona, Spain}
\email{lpardosimon@ub.edu}
\end{author}

\thanks{The first and the third authors are partially supported by Agencia Estatal de Investigaci\'on grants PID2023-147252NB-I00, CNS2025-166633, and by the Severo Ochoa and Mar\'ia de Maeztu Program for Centers and Units of Excellence in R\&D (CEX2020-001084-M). The third author is a Serra Húnter fellow. The second author was partially supported by the Department of Atomic Energy, Government of India, under Project Identification No. RTI 4014, an endowment of the Infosys Foundation, and ANRF research project grant ANRF/ARGM/2025/000089/MTR. \\\indent Part of this work was done during the second author's visit to the Universitat de Barcelona, whose support is gratefully acknowledged.}

\date{\today}

\begin{document}

\begin{abstract}
  In this paper, we initiate a systematic study of $(\infty : \infty)$ holomorphic correspondences that naturally arise as conformal combinations (matings) of transcendental entire maps with Fuchsian groups. This construction parallels the recent theory of finite-degree algebraic correspondences associated with rational maps. 
  
  Our correspondence combines the dynamics of a transcendental entire function outside a distinguished attracting/parabolic basin with the action of a compatible Fuchsian group within it.  

We show that the resulting correspondence is the composition of a M{\"o}bius involution and the deleted covering correspondence of a meromorphic function having exactly one simple pole. When the transcendental entire function has finitely many singular values, so does this meromorphic function, and its line complex can be described explicitly. 
\end{abstract}

\maketitle

\setcounter{tocdepth}{1}
\tableofcontents

\section{Introduction}\label{intro_sec}
The celebrated Sullivan dictionary from the 1980s links the dynamics of rational maps in $\Cc$ with hyperbolic geometry, in particular with the action of Kleinian groups (discrete subgroups of $\mathrm{PSL}_2(\mathbb C)$) and Fuchsian groups (discrete subgroups of $\mathrm{PSL}_2(\mathbb R)$) \cite{Sul85}. Much earlier, Fatou suggested that the dynamics of rational maps and the action of Kleinian/Fuchsian groups could be understood together through the dynamics of a single holomorphic object \cite{Fat29}. Such an object should naturally be a many-to-many mapping, namely, a \emph{correspondence}. Indeed, a rational map is generically many-to-one: several points may have the same image. By contrast, the action of a finitely generated group, through its generators, is naturally one-to-many: a single point can be mapped by each generator. Thus, if a common holomorphic object combining these two types of dynamics exists, it should be a correspondence.

Typically, combining rational dynamics with the dynamics of Kleinian groups leads to an \emph{algebraic} holomorphic correspondence. The study of algebraic correspondences is now a rapidly developing area of research. It was initiated by Bullett and Penrose in the mid-1990s \cite{BP}. They constructed a $2$-to-$2$ correspondence by mating the dynamics of quadratic polynomials with the action of the modular group $\mathrm{PSL}_2(\mathbb Z)$.
For a detailed account on more recent results and systematic studies in the theory of algebraic (holomorphic and anti-holomorphic) correspondences, we refer the reader to the survey articles \cite{LM26b,ICMSurvey}.

A natural question is whether the theory of algebraic correspondences can be extended to include the dynamics of entire, or more generally meromorphic, functions. The resulting holomorphic correspondences would no longer be algebraic. The only available result in this direction is due to Bullett and Freiberger \cite{BF}, where the authors use a limiting argument and the construction of \cite{BP} to build an $\infty$-to-$\infty$ correspondence combining the action of a certain Fuchsian group with (a part of the non-escaping) dynamics of the entire map $z \mapsto \mu \left(\sin \sqrt{z}\right)^2$, for a certain chosen parameter $\mu \in \mathbb C \setminus \{0\}$. 

In this paper, we initiate a systematic study of \emph{transcendental} correspondences that naturally combine compatible Fuchsian group actions with the dynamics of \emph{transcendental entire} maps. The correspondences we construct arise as compositions of M{\"o}bius involutions and covering correspondences of meromorphic maps having a unique (simple) pole. Our construction is inherently different from that of \cite{BF}. We use the strategies developed in \cite{LLM24,BLLM} (also see \cite{MV25}); however, because of the additional difficulties inherent in transcendental dynamics, such as infinite degree and the presence of the essential singularity at $\infty$, several steps require new arguments and additional care.

\subsection{Dynamics of correspondences}

Let $\C_* = \C \sm \{0\}$ be a punctured plane. For us, a holomorphic correspondence will be a curve $\mathfrak{C} \subset \C_* \times \C_*$ given by an equation $P(z,w)=0$, where $P$ is holomorphic in both variables. In this way, 
\[
(z,w) \in \mathfrak C \quad \Longleftrightarrow \quad P(z,w)=0.
\]
For such an object, we have natural projection maps
\[
\pi_1 \colon \mathfrak{C} \to \mathbb C_*, \quad \pi_1(z,w)=z,\qquad \pi_2 \colon \mathfrak{C} \to \mathbb C_*, \quad \pi_2(z,w)=w.
\]

Since the correspondence lives in the product of $\C_*$ with itself, we can study the dynamics of $\frakC$ by iterating the correspondence in the following way. We say that a bi-infinite sequence $\ldots \mapsto z_{-1} \mapsto z_0 \mapsto z_1 \mapsto z_2 \mapsto\ldots\mapsto z_n \mapsto z_{n+1} \mapsto \ldots$ in $\C_*$ is a \emph{(full) orbit of $z_0$ under $\frakC$} if $(z_n, z_{n+1}) \in \frakC$ for all $n \in \mathbb Z$. Note that for a given $z_n$, the point $z_{n+1}$ belongs to the set $\pi_2\left(\pi_1^{-1}(z_n)\right)$.

We will assume that, except for finitely many points $z_{n+1}$ (called \emph{singular values}), the mapping $\pi_2 \circ \pi_1^{-1} \colon z_n \mapsto z_{n+1}$ locally extends to a conformal map in some small neighborhood of $z_{n}$ in $\C_*$. This extension is called a \emph{forward branch of the correspondence}. Similarly, a local conformal extension of the mapping $\pi_1 \circ \pi_2^{-1} \colon z_{n+1} \mapsto z_n$ is called a \emph{backward branch of the correspondence}. In this way, the iteration of the correspondence should be understood as compositions of forward and backward local holomorphic branches. 

We say that a set $\mathcal K \subset \C_*$ is \emph{fully invariant} under $\frakC$ if for $(z,w) \in \frakC$,
\[
\quad z \in \mathcal K \, \Longleftrightarrow \,  w \in \mathcal K;
\]
if only the implication $\Rightarrow$ holds, then $\mathcal K$ is called \emph{forward invariant}, and if only the implication $\Leftarrow$ holds, it is called \emph{backward invariant}. Note that a forward (resp., backward) invariant set is preserved under the dynamics of forward (resp., backward) branches of the correspondence.

The number of forward branches of the correspondence around a generic point is the \emph{forward degree}; and the number of backward branches (again, around a generic point) is called the \emph{backward degree}. Both of these numbers (finite or infinite) are well-defined.

Note that one can compose correspondences to obtain a correspondence. Namely, if $\frakC_1$ and $\frakC_2$ are correspondences, then their composition $\frakC_1\circ\frakC_2$ is the correspondence such that
\[
(z,w) \in \frakC_1 \circ \frakC_2 \quad \Longleftrightarrow \quad \exists u \in \C_* \quad \text{such that } \quad (z,u) \in \frakC_1 \quad \text{and} \quad (u,w)\in \frakC_2.
\]
In this way, iterating a correspondence $\frakC$ can be viewed as repeatedly composing it with itself. Finally, since every holomorphic map $f$ can be viewed as a correspondence $w-f(z)=0$, we can compose correspondences and maps.

A special class of correspondences is \emph{covering correspondences}. We say that $\mathfrak C$ is a {covering correspondence} if 
\[
(z,w) \in \frakC \quad \Longleftrightarrow \quad h(z)=h(w),
\]
where $h \colon \C \to \Cc$ is a meromorphic map. For a covering correspondence, the forward and backward branches are given by local deck transformations $h^{-1} \circ h$ of the map $h$. Finally, a \emph{deleted} covering correspondence is a correspondence of the form
\[
(z,w) \in \frakC \quad \Longleftrightarrow \quad \frac{h(z)-h(w)}{z-w}=0.
\]
In this case, the correspondence contains all branches of local deck transformations except the identity branch (as it has been ``deleted"). 

The dynamics of a covering, or a deleted covering, correspondence is just a permutation of points in fibers, and hence it is quite trivial. On the contrary, the dynamics of covering correspondences composed with other correspondences or maps can be highly non-trivial, as we will see in our Main Theorem. 

\subsection{Statement of the main result}

To state the result, we need to introduce two classes of maps and groups that can be combined to produce a correspondence.

On the map side, we introduce classes $\Kstar(d)$, $d\ge2$, $\star\in\{\attr,\parab\}$, of transcendental entire maps with a marked fixed immediate $\star$-basin $U$, where $d=\deg(f\colon U\to U)$. In the parabolic case, these are strongly geometrically finite maps with bounded Fatou components that are Jordan disks; in the attracting case, they are hyperbolic maps with bounded Fatou components that are quasidisks. See Definitions~\ref{Def:Kp} and~\ref{Def:Kattr}.

As an example, the map $z \mapsto \sin(z)$ belongs to $\K_\parab(2)$, while the map $z \mapsto \frac{\pi}{2}\sin(z)$ belongs to $\K_\attr(2)$ (see Figure~\ref{Fig:Sins}). 

\begin{figure}[t]
\captionsetup{width=0.98\linewidth}
    \captionsetup{width=0.98\linewidth}
    \centering
    \includegraphics[width=0.7\linewidth]{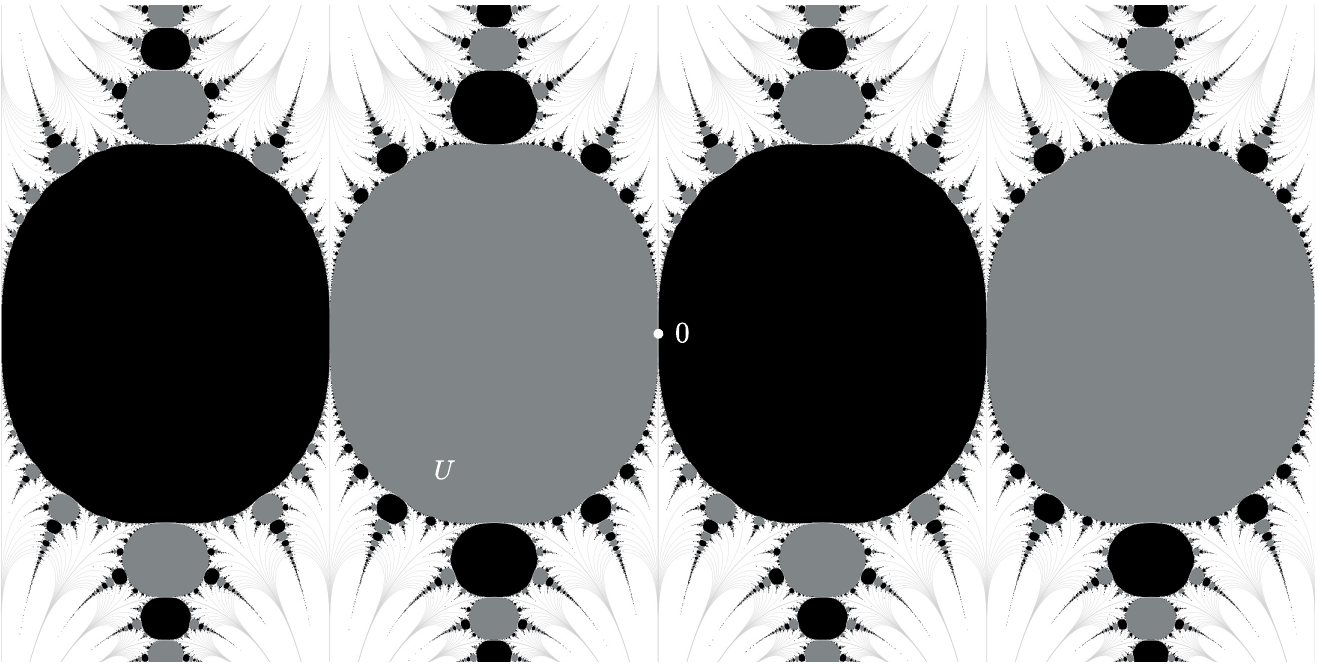}
\vspace{-3mm}
    
    \rule{0.85\linewidth}{0.1pt}

\vspace{1mm}    

\hspace{-2.5mm}
    \includegraphics[width=0.7\linewidth]{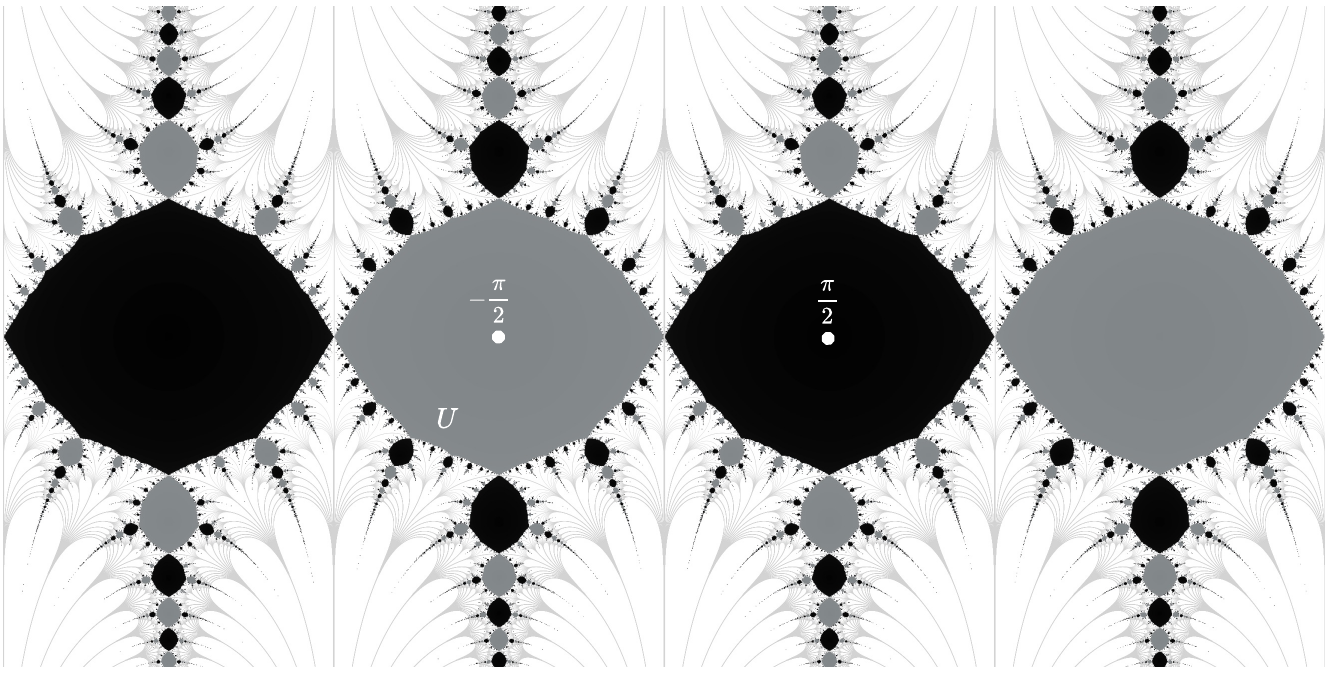}
    
    \caption{The degree-two examples on the entire-map side. Top: $f(z)=\sin z\in\mathscr K_{\parab}(2)$. The unique fixed point $0$ is parabolic and has two immediate parabolic basins, shown in black and gray. Bottom: $f(z)=\frac{\pi}{2}\sin z\in\mathscr K_{\attr}(2)$; the unique fixed points $\pm\pi/2$ are superattracting, and their immediate basins are shown in black and gray. In both examples, the Fatou set consists precisely of the grand orbits of the two indicated immediate basins. Either component may be chosen as the marked basin $U$. Our construction of the correspondence keeps the dynamics of $f$ outside the grand orbit of $U$ and replaces the compatible group dynamics from $\mathscr G_{\star}(2)$ (see Figure~\ref{Fig:Groups}), with $\star=\parab$ or $\attr$ respectively.} 
    \label{Fig:Sins} 
\end{figure}

For each $f \in \Kstar(d)$ with the marked $\star$-basin $U$, we can consider the \emph{dynamics of $f$ outside the grand orbit of $U$}, i.e., the restriction $f \colon \C \sm \bigcup_{n \ge 0} f^{-n}(U) \to \C \sm \bigcup_{n \ge 0} f^{-n}(U)$. This is the part of the dynamics of $f$ that we want to keep in our correspondence. In particular, we keep the dynamics of $f$ on its Julia set. On the other hand, we want to replace the dynamics of $f$ on the grand orbit of $U$ with a compatible group dynamics.

For this, on the group side, we introduce a class of \emph{compatible} Fuchsian groups $\Gstar(d)$, $d\ge 2$, $\star \in \{\attr, \parab\}$ consisting of groups $G$ acting on $\disk$ that are associated to finite-area zero-genus orbifolds with finitely many punctures and with at most two orbifold points so that at most one of them is of order greater than $2$. For each $G \in \Gstar(d)$, we can construct the corresponding factor Bowen--Series map $F_G\colon \ovl\disk \sm T \to \ovl\disk$ defined on a subset $\ovl\disk \sm T$ of the unit disk. The restriction $F_G\vert_{\Circle}$ is an expansive covering map of degree $d \ge 2$, which makes this action compatible with the action of $f\in\K_*(d)$ on $\partial U$. The precise conditions on the groups in $\Gstar(d)$ will be given in Definitions~\ref{Def:Gattr} and \ref{Def:Gparab}. 

As examples, the classical \emph{modular group} generated by $z \mapsto z+1$ and $z \mapsto -\frac{1}{z}$ in the upper half-plane $\mathbb H$ belongs to $\Gstar(2)$ for both choices of $\star$. On the other hand, the groups associated to punctured spheres possibly with an order two orbifold point belong to $\mathscr{G}_{\attr}$ (see Figure~\ref{Fig:Groups}).

The main result of the paper is the following theorem (see Theorem~\ref{corr_mating_thm} for the more precise version).

\begin{mainthm}\label{main_thm}
For every $d \ge 2$, $\star \in \{\attr, \parab\}$, and for every choice of a map $f \in \Kstar(d)$ with a marked $\star$-basin $U$ and a Fuchsian group $G \in \Gstar(d)$, there exists a holomorphic correspondence $\frakC \subset \C_* \times \C_*$ and a pair of sets $\pmb{\cR}_0 \subset \C_*$ and $\pmb{\cE} \subset \C_*$ \footnote{Here, ``R'' stands for \emph{r}esident set, and ``E" stands for the \emph{e}xpulsion set; this notation will be further explained in Section~\ref{corr_sec}.} such that:

\begin{enumerate}[leftmargin=8mm]
    \item 
    $\pmb{\cR}_0$ is closed and forward invariant under $\frakC$, while $\pmb{\cE}$ is open and fully invariant under $\frakC$.
    \item 
    If $\pmb{\cR} := \pmb{\cR}_0 \cup \eta(\pmb{\cR}_0)$, where $\eta(z) = 1/z$ is the involution, then $\pmb{\cR}$ is fully invariant under $\frakC$ and the sets $\pmb{\cR}$ and $\pmb{\cE}$ provide a partition of $\C_*$, i.e., $\C_*= \pmb{\cR} \sqcup \pmb{\cE}$.
    \item 
    There exists a forward branch of the correspondence $\frakC$ that preserves $\pmb{\cR}_0$, and the restriction of this branch $\pmb{\cR}_0$ is conformally conjugate to the dynamics of $f$ outside the grand orbit of $U$.
    \item 
    \label{Main:Item:4}
     There exists a connected component $\pmb{\cW}_0$ of $\pmb{\cE}$ such that $\pmb{\cW}_0$ is a topological disk and all the forward branches of the correspondence $\frakC$ preserving $\pmb{\cW}_0$ are univalent and generate a group that is conformally conjugate to the action of $G$ on $\disk$.
\end{enumerate}

Furthermore, $\frakC$ is a correspondence of bi-degree $(\infty:\infty)$ given as a composition of a deleted covering correspondence and a M{\"o}bius involution. Explicitly, there exists a meromorphic map $\mathfrak{h} \colon \C \to \Cc$ with a unique simple pole at $0$ so that the correspondence $\frakC$ can be written as 
\begin{equation}
(z, w) \in \mathfrak{C} \iff \frac{\mathfrak{h}(w)-\mathfrak{h}(\eta(z))}{w- \eta(z)}=0.
\label{corr_eqn}
\end{equation}
\end{mainthm}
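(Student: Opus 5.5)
We follow the surgery strategy of \cite{LLM24,BLLM}, adapted to the transcendental setting. First, we replace the dynamics of $f$ on the marked basin $U$ by the factor Bowen--Series map $F_G$. Since $f\in\Kstar(d)$, the restriction $f\colon\partial U\to\partial U$ is a degree $d$ covering. In the attracting case $\partial U$ is a quasicircle; in the parabolic case it is a Jordan curve with a single parabolic fixed point. The map $F_G\vert_{\Circle}$ is an expansive degree $d$ covering of the same type: hyperbolic in the attracting case, and with exactly one parabolic fixed point in the parabolic case. This matching is what the definition of $\Gstar(d)$ should guarantee. We then build a topological conjugacy $\psi\colon\Circle\to\partial U$ between $F_G\vert_{\Circle}$ and $f\vert_{\partial U}$, using symbolic codings by Markov partitions. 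We upgrade it to a quasisymmetric map in the attracting case, and to a David homeomorphism in the parabolic case. In the parabolic case the David extension theorems for conjugacies between parabolic circle coverings, used in \cite{LLM24}, apply.

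Next, we glue. Take a Riemann map $\phi\colon\disk\to U$ and extend $\psi$ to the disk. Define a quasiregular (resp.\ David-regular) map $\tilde F$ on the sphere: it equals $f$ outside $U$, off the grand orbit of $U$ and on the preimage pieces, and it equals the pushforward of $F_G$ inside $U$ via the extension of $\psi$. The natural domain of definition of $\tilde F$ is $\C\sm$ (image of the fundamental tile $T$). Pull back the standard structure along the $G$-orbits and the backward $f$-orbits to get an invariant Beltrami coefficient $\mu$ supported on the grand orbit of $U$. Then integrate $\mu$: this uses the measurable Riemann mapping theorem in the attracting case, and David's integrability theorem in the parabolic case, where the David bound must be checked using the fact that the pullbacks under $f$ are conformal off the postsingular set. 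This produces an antiholomorphic-free holomorphic object $F$ that is conformal on the image of the marked disk. By construction, $F$ is conformally conjugate to $f$ on $\C\sm\bigcup_n f^{-n}(U)$ and to $F_G$ on a disk $\pmb{\cW}_0$.

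We expect the main obstacle to be realizing $F$ algebraically, i.e.\ producing $\mathfrak h$ and $\eta$. Following \cite{BLLM}, we show that $F$ near the boundary of the tile region is a restriction of $\eta\circ$ (a deck-type branch). Concretely, we cover the image of $T$ by a Möbius-reflected copy of the disk and define $\mathfrak h$ by gluing two pieces. The first is $F$ on the complement of the tile region. The second is $F_G$-compatible uniformization on the reflected disk, using that $F_G$ on each fundamental piece is given by group elements which extend as Möbius maps with matching boundary identifications. The resulting map is holomorphic on $\C$ except at one point, where the uniformizing disk's center goes to $\infty$ as a simple pole. Normalize so that the pole is at $0$ and the reflection is $\eta(z)=1/z$. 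The delicate point is showing that the glued function is meromorphic on all of $\C$, with an essential singularity only at $\infty$. This requires that the gluing curves are removable (quasicircles, resp.\ David-removable Jordan curves in the parabolic case), and that $F$ has no extra poles, which follows because $f$ is entire.

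Finally, we read off the dynamics. The forward branches of \eqref{corr_eqn} at $z$ are the nonidentity local deck transformations of $\mathfrak h$ applied to $\eta(z)$. Set $\pmb{\cR}_0$ to be the image of the non-escaping set of $F$, and $\pmb{\cE}$ to be the grand orbit of $\pmb{\cW}_0$ under the branches. On $\pmb{\cR}_0$, the unique branch not leaving it is $F$, which gives item (3). On $\pmb{\cW}_0$, the branches are exactly the Möbius side pairings, and these generate $G$, which gives item (4). Items (1) and (2) follow from the partition of $\C$ into the escaping and non-escaping sets of $F$, symmetrized by $\eta$. Since $f$ is transcendental, $\mathfrak h$ has infinite degree, so the bi-degree is $(\infty:\infty)$.
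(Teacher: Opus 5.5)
Your surgery step has the two cases reversed, and the attracting case fails as you set it up. For every $G\in\mathscr G_{\attr}(d)$ the factor tile has $p(G)\ge 1$ ideal vertices on $\Circle$. By Theorem~\ref{Thm:FBSProperties}(4) these are parabolic points of period one or two of $F_G|_{\Circle}$, so $F_G|_{\Circle}$ is expansive but never expanding. By contrast, $B=\phi\circ f\circ\phi^{-1}$ is expanding on $\Circle$ because $U$ is an attracting basin. A conjugacy sending a repelling periodic point of $B$ to a parabolic one of $F_G$ cannot be quasisymmetric. Indeed, along a backward orbit converging to the fixed point, the ratio of consecutive gaps to the distance from the fixed point stays bounded below at a repelling point but tends to $0$ at a parabolic one, and quasisymmetric maps preserve this dichotomy. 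So neither quasiconformal surgery nor the measurable Riemann mapping theorem is available in the attracting case. The paper instead uses the David extension of Theorem~\ref{Thm:AttractingCompatibility}. The real work is then to show that the pulled-back coefficient on the infinitely many components of the grand orbit of $U$ still satisfies \eqref{Eq:DavidCond}. Your justification, that pullbacks are conformal off the postsingular set, is not enough, since preimage components of $U$ can contain critical points. The paper needs the uniform degree bound of Lemma~\ref{Lem:UnivComp}, uniform bounded geometry of the components, a bounded-degree pullback estimate for David coefficients, Koebe distortion for the univalent tail, and finiteness of total spherical area. It then uses the David uniqueness theorem and removability of $\Psi(\partial U)$ to get analyticity of $g$. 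Conversely, in the parabolic case $B$ and the Farey map have matching parabolic fixed points, and the surgery is quasiconformal. The actual difficulty there is that $\partial U$ is only known to be a Jordan curve, which need not be removable. The paper handles this with a conjugacy that is equivariant on one-sided pinched collars (Theorem~\ref{Thm:ParabolicCompatibility}), so the glued map agrees with $f$ near $\partial U$ off finitely many points. Your gluing has no substitute for this.

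Your construction of $\mathfrak h$ is also not a working argument. It rests on $F_G$ being given piecewise by M{\"o}bius group elements. This is false whenever $n(G)\ge 3$, in particular for every Farey map $F_d$ (where $n=d+1$), which has critical points of local degree $n$ (Theorem~\ref{Thm:FBSProperties}(6)). The mechanism that works uses only that $g|_{\partial\tilde T}$ is an orientation-reversing involution. Weld two copies of $\widehat{\C}\sm\overline{\tilde T}$ along $\partial\tilde T$ minus its cusps via $g$. Set $\mathfrak h$ equal to the uniformizing map on one copy and to $g$ composed with the uniformizing map on the other; these agree on the seam exactly because the welding is by $g$. The delicate point is not removability of quasicircles or Jordan curves, since the seam is piecewise analytic. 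It is showing that each end of the welded surface at a cusp $p\in\mathfrak s$ is a puncture: near $p$, $\mathfrak h$ is a double cover of an annulus onto the punctured disk $B_\alpha\sm\{p\}$, which forces infinite modulus. Only then does the surface compactify to $\widehat{\C}$. The copy-swapping map then becomes a M{\"o}bius involution conjugate to $\eta$. It exchanges the simple pole, lying over $\infty$ in the copy where $\mathfrak h$ is the uniformizing map, with the essential singularity, lying over $\infty$ in the other copy. Item (4) likewise needs the factor structure, which ``the branches are exactly the M{\"o}bius side pairings'' skips. Concretely, $\mathfrak h\colon\pmb{\cW}_i\to\tilde U$ is a degree-$n$ branched cover, and the deck group of $\mathfrak h|_{\pmb{\cW}}$ is cyclic of order $np$ generated by some $\tau$. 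The rotation $M_\omega$ appears as $\tau^p$, and the side pairings are elements of $\langle\eta,\tau\rangle$ stabilizing $\pmb{\cW}_0$.
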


\begin{figure}[th]
\captionsetup{width=0.98\linewidth}
    \centering
    \includegraphics[width=0.37\linewidth]{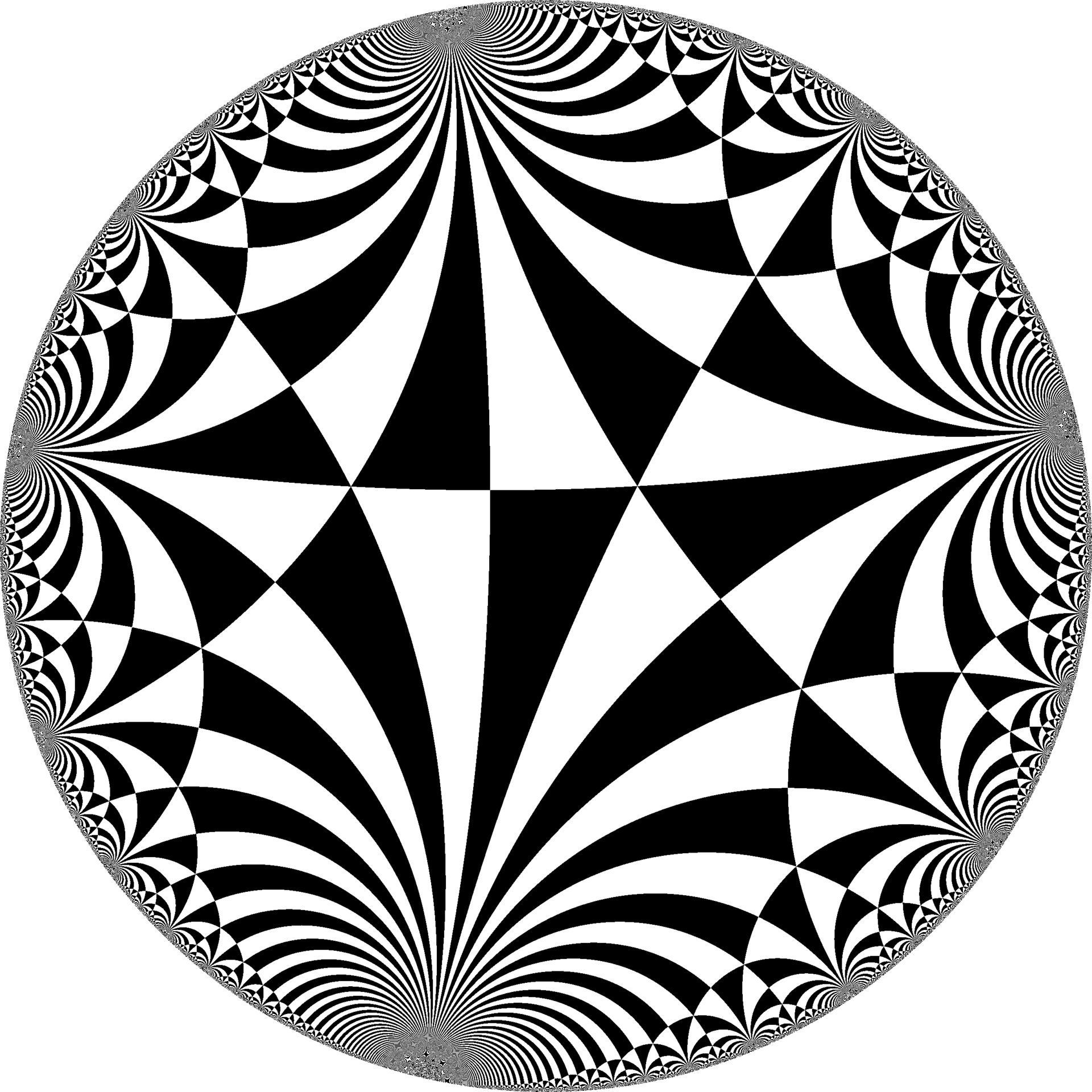}\quad \includegraphics[width=0.37\linewidth]{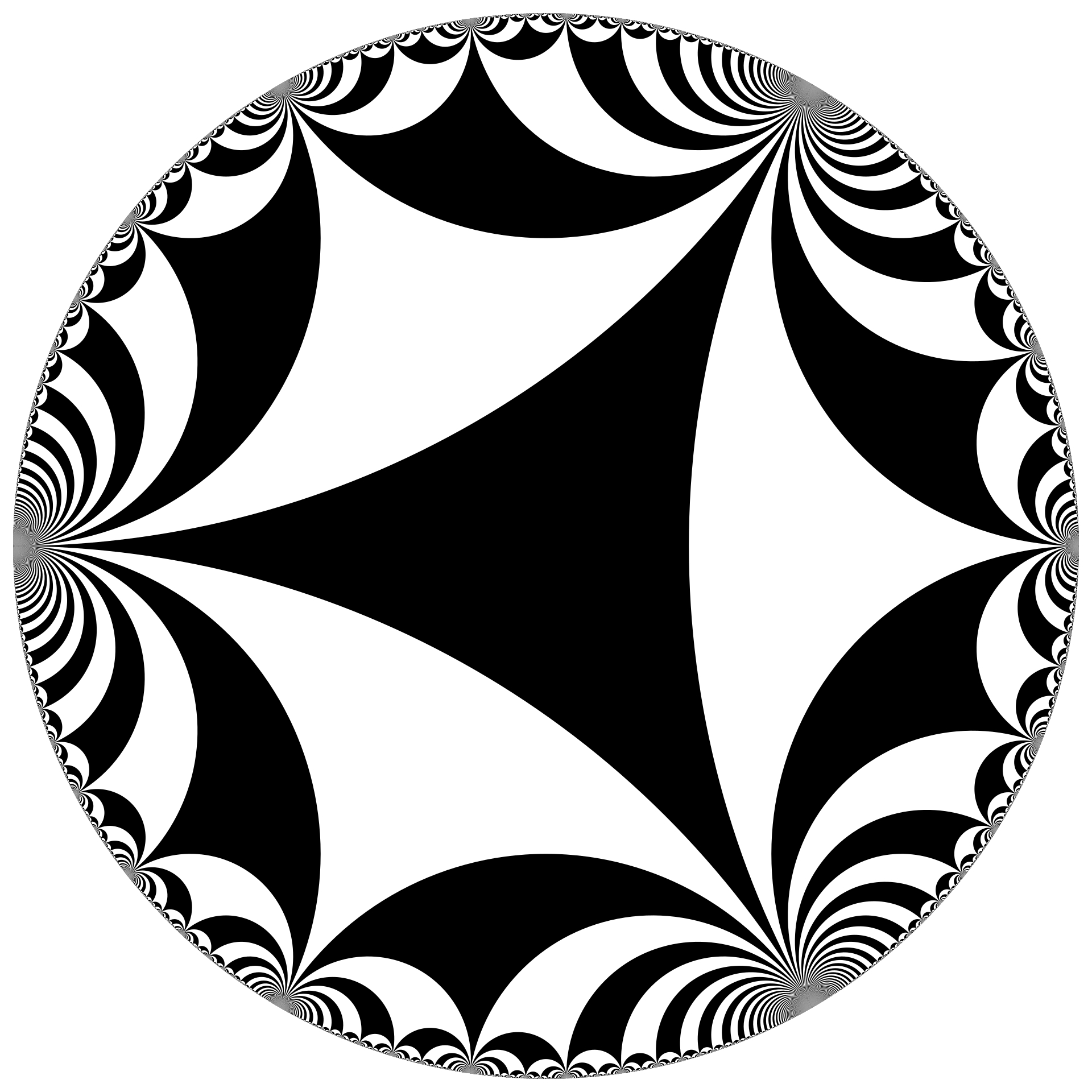}
    \caption{Depicted are tessellations of the unit disk under the actions of the modular group (left) and the Fuchsian group uniformizing a sphere with two punctures and one orbifold point of order $2$ (right). These are precisely the two groups in $\mathscr{G}_\attr(2)$. On the other hand, $\mathscr{G}_{\parab}(2)$ consists only of the modular group.}
    \label{Fig:Groups}
\end{figure}

\begin{rem}
We note that in the case when $f$ has finitely many singular values (i.e., $f$ is in the Speiser class), one can explicitly construct $\mathfrak{h}$ (up to pre- and post-composition with quasiconformal homeomorphisms so that the resulting map is holomorphic) based on the line complex (Speiser graph) of $f$ and the properties of $G$. We will explain this in Section~\ref{speiser_subsec}.
\end{rem}

\subsection{Connections with matings between rational maps and Fuchsian groups}

In the literature, most mating constructions between polynomial maps (respectively, parabolic rational maps) and Fuchsian groups replace the action of the polynomial on the basin of infinity (or that of the parabolic rational map on its marked fully invariant immediate parabolic basin) with the action of the group (see \cite{LLM24,BLLM}). The situation for transcendental entire functions is rather different, as there is no genuine analog to the basin of infinity (or a fully invariant immediate parabolic basin) for such maps.

In this paper, we take a different approach. Namely, we replace the action of a transcendental entire map $f$ on an invariant (attracting/parabolic) Fatou component $U$. Note that $U$ cannot be fully invariant. This creates an interesting distinction between the polynomial case and our current setup. Because the map $f$ may have critical points within the iterated preimages of $U$, we can only expect a group structure on the specific `copy' of $U$ where a group is inserted, but not necessarily on its iterated preimages. Specifically, this copy of $U$ corresponds exactly to the simply connected domain $\pmb{\cW}_0$ appearing in Item~\eqref{Main:Item:4} of the Main Theorem.

It is also worth remarking that the construction of conformal combinations carried out in this paper is quite general. Instead of replacing the dynamics of $f$ only on a single invariant Fatou component, one could replace its action on several such components with compatible Fuchsian groups. This would result in a transcendental correspondence that combines the dynamics of a transcendental entire map (at least on its Julia set) with the actions of a finite collection of Fuchsian groups (cf. \cite{MV25}). This should give rise to embeddings of products of Teichm{\"u}ller spaces of several genus zero orbifolds in the space of transcendental correspondences (or to the family of meromorphic maps that define these correspondences via Equation~\eqref{corr_eqn}). It would be interesting to know pre--compactness / boundary behavior of such embeddings in the spirit of the degeneration results for algebraic correspondences established in \cite{LMM26}.

\subsection{Organization of the paper} 

The paper is organized as follows. In Section~\ref{Sec:mateable_maps}, we describe the class of transcendental entire maps $\Kstar(d)$ used in the construction of our correspondences. In Section~\ref{Sec:MateableGroups}, we introduce the compatible Fuchsian groups $\Gstar(d)$ and their associated factor Bowen--Series maps. In Section~\ref{Sec:Combination}, we prove the combination theorem for maps in $\Kstar(d)$ and the factor Bowen--Series maps of groups in $\Gstar(d)$. The resulting object is a partially defined map $g \colon \C \sm \tilde T \to \C$. The construction in that section relies on quasiconformal or David surgery, depending on $\star$. In Section~\ref{analytic_description_sec}, we give an analytic description of the conformal combination map $g$ in terms of a meromorphic map $\mathfrak{h} \colon \C \to \Cc$. In particular, under certain assumptions on $f$, we explain how to construct the line complex of $\mathfrak{h}$ from the line complex of $f$ and the properties of $G$. Finally, in Section~\ref{corr_sec}, we use the map $\mathfrak{h}$ to define the required correspondence explicitly; see \eqref{corr_eqn}. The remainder of that section is devoted to verifying the properties of the correspondence asserted in the Main Theorem.

\section{Mateable entire maps}\label{Sec:mateable_maps}
The purpose of this section is to describe the entire maps that will be used on the holomorphic side of the combination construction.  The hypotheses below ensure that the Fatou components in which we perform the surgery have controlled boundary geometry, and that pullbacks of these components do not develop uncontrolled branching.  We shall use two closely related classes.  The first is a parabolic class, denoted   $\mathscr K_\parab$, including a subclass of strongly geometrically finite entire maps.  The second is an attracting class, $\mathscr K_\attr$,  consisting of hyperbolic entire maps whose Fatou components are bounded quasidisks.

Recall that for a transcendental entire map $f$, the singular set, $S(f)$, is defined as the closure of the set of finite asymptotic and critical values of $f$. Equivalently, it consists of the set of singularities of the inverse function $f^{-1}$.  We say that $f$ is in the \textit{Eremenko--Lyubich class $\B$} if $S(f)$ is bounded. We denote its postsingular set by  $P(f):=\overline{\bigcup_{n\ge 0} f^n(S(f))}$. The maps we consider are subclasses of functions in $\B$, that we describe in what follows.

\subsection{Strongly geometrically finite maps with parabolic basins}

Geometrically finite polynomials are those for which each critical point
in the Julia set has a finite orbit. The natural extension of this definition to transcendental entire functions is the following, first given in \cite{MB10_landing_geomfinite}, further studied in \cite{ARS22_geomfinite}.
\begin{defn} A transcendental entire function $f$ is \emph{geometrically finite} if $F(f)\cap S(f)$ is compact and $J(f)\cap P(f)$ is finite. 
	We say that $f$ is \emph{strongly geometrically finite} if it is geometrically finite with \textit{bounded criticality} on the Julia set, the latter meaning that $J(f)$ contains no finite asymptotic values of $f$ and the local degree of $f$ at points of $J(f)$ is uniformly bounded.
\end{defn}
If $f$ is geometrically finite, then $F(f)$ is either empty or consists of the basins of attraction of finitely many attracting or parabolic cycles, \cite[Proposition 2.6]{MB10_landing_geomfinite}. The class of strongly geometrically finite entire functions provides a natural transcendental analogue of geometrically finite rational maps. Results of \cite{ARS22_geomfinite} show that these maps admit an expanding structure near the Julia set and a well-developed theory of dynamic rays, closely paralleling the polynomial setting. See \cite[Section 11]{ARS22_geomfinite} for several examples of strongly geometrically finite entire functions, including suitable sine maps, like $z\mapsto \sin(z)$.

For our parabolic construction, the point of the following theorem is that a criticality condition is equivalent to a topological statement about all Fatou components.  In particular, the relevant parabolic basins are bounded Jordan domains, which is the amount of boundary regularity we need for our combination result.

\begin{thm}[{\cite[Theorem 1.8]{ARS22_geomfinite}}]\label{thm:bounded_geomfinite} Suppose that $f$ is strongly geometrically finite. Then the following are equivalent:
	\begin{enumerate}[leftmargin=8mm]
		\item every component of $F(f)$ is a bounded Jordan domain;
		\item the map $f$ has no asymptotic values, and every component of $F(f)$ contains at most finitely many critical points.
	\end{enumerate}	
\end{thm}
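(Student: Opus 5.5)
We prove the two implications separately, using throughout the structure of strongly geometrically finite maps. By \cite[Proposition 2.6]{MB10_landing_geomfinite}, $F(f)$ consists of the basins of finitely many attracting or parabolic cycles. Moreover, $J(f)\cap P(f)$ is finite and there is bounded criticality on $J(f)$, so $f$ admits an expanding (orbifold) metric near $J(f)$ away from the parabolic points.

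\emph{(1)$\Rightarrow$(2).} Suppose first that $f$ has a finite asymptotic value $a$. By bounded criticality, $a\notin J(f)$, so $a$ lies in some Fatou component $V$. Take an asymptotic tract over a small disk $D\subset V$ around $a$. This is an unbounded simply connected domain on which $f$ acts as a universal covering of $D\setminus\{a\}$ (in the logarithmic-singularity case). The tract is contained in a single component of $f^{-1}(V)$, which is a Fatou component, and that component is therefore unbounded. This contradicts (1). In general, when the singularity over $a$ is not logarithmic, we instead use a curve $\gamma\to\infty$ with $f(\gamma)\to a$. After shrinking, $f(\gamma)\subset V$, so $\gamma$ eventually lies in one Fatou component, which again is unbounded.

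For the criticality condition, let $W$ be a Fatou component, which is bounded by (1). Since $f$ is entire, $f(W)$ is contained in a Fatou component, and $f\colon W\to f(W)$ is proper: $\overline W$ is compact and $\partial W\subset J(f)$ maps into $J(f)$. A proper holomorphic map between simply connected domains has finite degree, so by Riemann--Hurwitz $W$ contains finitely many critical points.

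\emph{(2)$\Rightarrow$(1).} This direction is the real content. If there are no asymptotic values, then every Fatou component $W$ is mapped properly onto its image component, and so does not contain a tract. The key step is to show that each periodic Fatou component $U$ is bounded. I would first try a Böttcher/Fatou-coordinate argument. Take an absorbing domain $A\Subset U$ for the cycle, i.e.\ a disk around the attracting point or a petal for the parabolic one, chosen to contain the finitely many postsingular points of $U$ outside a compact set. Pulling back $A$ under the branches of $f^{-p}|_U$ gives an exhaustion $A_n$ of $U$ by compact sets. Since no asymptotic values exist and only finitely many critical points lie in $U$, each pullback is proper of uniformly bounded degree. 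Boundedness and the Jordan property of $U$ then come from a shrinking argument. The annuli $A_{n+1}\setminus A_n$ are pulled back near $J(f)$ by maps of bounded degree, and these are uniformly contracting in the expanding metric of \cite{ARS22_geomfinite}. Hence the boundary curves $\partial A_n$ form a Cauchy sequence of parametrized curves, which converges uniformly to a parametrization of $\partial U$. The parabolic points require a separate local analysis in repelling petals to retain contraction. This yields a continuous parametrization of $\partial U$, and in particular $U$ is bounded.

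We then pass to preimage components. A component $W$ of $f^{-n}(U)$ maps properly with finite degree onto a bounded Jordan domain. Preimages of Jordan domains under proper maps of finite degree whose critical values lie inside are bounded. The Jordan property follows because all critical values in $U$ lie in the interior, together with the finiteness of $J(f)\cap P(f)$, which lets us lift the boundary curve locally.

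Injectivity of the boundary parametrization, that is, that $\partial U$ is Jordan rather than merely locally connected, is the main obstacle. I would handle it by showing that no two dynamic rays in $U$ land at the same point. Such a coincidence would bound a region containing Julia points that is mapped into itself in a way incompatible with the expansion on $J(f)$, following the polynomial argument for geometrically finite maps.
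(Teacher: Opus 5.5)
The paper does not prove this statement; it quotes it from \cite{ARS22_geomfinite}, so your proposal has to stand on its own. Your implication (1)$\Rightarrow$(2) does. The asymptotic-curve argument suffices, and finiteness of critical points is immediate because $f'\not\equiv 0$ has finitely many zeros in the compact set $\overline{W}$. One side remark: the claim that ``no asymptotic values implies $f\colon W\to f(W)$ is proper'' is false without the critical-point hypothesis. For example, $\tfrac12\sin z$ has no asymptotic values, but its immediate basin of $0$ contains $\mathbb{R}$ and maps to itself with infinite degree.

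The converse has a genuine gap at its core. You reduce boundedness and local connectivity of a periodic $U$ to the claim that the lifted homotopies are ``uniformly contracting in the expanding metric''. That claim is exactly what has to be proved, and as stated it does not even give boundedness. Bounded total length of the lifted paths keeps all $\partial A_n$ in a bounded set only if two things hold:
\begin{itemize}
\item the metric puts $\infty$ at infinite distance, as a hyperbolic metric on the complement of a compact set does (density $\asymp 1/(|z|\log|z|)$);
\item the contraction constant is uniform on all pullbacks, including near $\infty$.
\end{itemize}
Near $\infty$ is precisely where an unbounded $U$ would live, and it is where class $\mathcal{B}$ and bounded criticality must actually be used. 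Near parabolic points there is no uniform contraction at all, so ``a separate local analysis'' leaves the delicate case open.

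The step you single out as the main obstacle, injectivity, is in fact easy, but your mechanism is not the right one. The region cut off by two rays is not in general mapped into itself, and expansion on $J(f)$ produces no contradiction. The correct tool is the maximum principle. Suppose the cycle of $U$ is bounded and a Riemann map $\mathbb{D}\to U$ extends continuously. If the images of two radii land at a common point, they form a Jordan curve $\Gamma$ bounding a domain $V$. Since $f^n(\Gamma)$ stays in the bounded set $\overline{U}\cup\dots\cup\overline{f^{p-1}(U)}$, the family $\{f^n\}$ is uniformly bounded, hence normal, on $V$. Yet $V$ meets $\partial U\subset J(f)$ by the F.~and M.~Riesz theorem, a contradiction.

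The same argument is what you need for preimage components $W$. Critical points of $f$ may lie in $J(f)$, hence on $\partial W$. ``Lifting the boundary locally'' does not exclude two local sectors of $f^{-k}(U)$ at such a point both belonging to $W$, which would be a pinch; the maximum principle does exclude it. For these $W$, local connectivity of $\partial W$ is automatic: cluster sets of a Riemann map of $W$ are continua on which $f^k$ is constant, hence points. Boundedness of $W$ does need the absence of asymptotic values explicitly. An unbounded $W$ would carry a curve to $\infty$ along which $f^k$ tends to a point of $\partial U$, and this forces an asymptotic value of $f$.
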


\begin{defn}[Class $\mathscr K_\parab$]
	\label{Def:Kp}
	We say that $f\in \mathscr K_\parab$ if $f$ is strongly geometrically finite, satisfies one of the conditions in Theorem~\ref{thm:bounded_geomfinite}, and has at least one parabolic fixed point. For $d\ge2$, define $\mathscr K_{\parab}(d):=\{f\in\mathscr K_{\parab}:\deg(f\colon U\to U)=d\}$, where $U$ is the marked invariant immediate parabolic basin.
\end{defn}

\subsection{Hyperbolic maps}

Although strongly geometrically finite maps form a broader class, the attracting construction requires uniform quasidisk geometry of Fatou components. For this reason, we restrict attention to hyperbolic entire maps, the natural transcendental analogue of hyperbolic rational maps; see \cite{RS_hyperbolicity} for different characterizations of these maps.

\begin{defn}[Hyperbolicity]
	\label{Def:Ka}
	A transcendental entire function $f$ is \emph{hyperbolic} if $f\in \B$ and every element of $S(f)$ belongs to the basin of some attracting periodic cycle of $f$.
\end{defn}
If $f$ is hyperbolic, then $F(f)$ is the union of the attracting basins of finitely many attracting cycles, and every connected component of $F(f)$ is simply connected. Moreover, there exists a compact set $K\subset F(f)$ such that
\begin{equation}\label{eq:compact_hyperbolic}
f(K)\subset \operatorname{int}(K)
\qquad\text{and}\qquad
S(f)\subset \operatorname{int}(K),
\end{equation}
see \cite[Proposition~2.1]{BFR15_hyperbolic}. The following theorem characterizes those hyperbolic maps whose Fatou components are bounded quasidisks.

\begin{thm}[{\cite[Theorems 1.2 and 1.10]{BFR15_hyperbolic}}]\label{thm:bounded_hyperbolic} Let $f\in \B$ be hyperbolic. Then the following are equivalent:
	\begin{enumerate}[leftmargin=8mm]
		\item every component of $F(f)$ is a bounded quasidisk;
		\item the map $f$ has no asymptotic values, and every component of $F(f)$ contains at most finitely many critical points.
	\end{enumerate}	
\end{thm}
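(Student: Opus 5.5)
The plan is to prove the two implications separately, using throughout the absorbing compact set $K\subset F(f)$ from \eqref{eq:compact_hyperbolic} and the resulting uniform expansion of $f$ near $J(f)$. That expansion is measured in the hyperbolic metric of $\C\sm K$ (more precisely, of $\C\sm P(f)$, which for hyperbolic maps has a neighbourhood of $J(f)$ on which $f^{-1}$ contracts uniformly).

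\textbf{(1)$\Rightarrow$(2).} Let $V$ be a bounded component of $F(f)$ and $V'$ the component containing $f(V)$. Since $\partial V\subset J(f)$ is compact and $f(J(f))\subset J(f)$, the map $f\colon V\to V'$ is proper, hence a branched covering of finite degree. By Riemann--Hurwitz it has finitely many critical points. For asymptotic values, hyperbolicity places any finite asymptotic value $a$ in the Fatou set, say in a component $W$. Take an asymptotic curve $\gamma\to\infty$ with $f(\gamma)\to a$. Its tail cannot stay in a single component of $f^{-1}(W)$, since such components are bounded. So $\gamma$ meets $J(f)$ at points $z_n\to\infty$ with $f(z_n)\to a$. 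Because $f(J)\subset J$ and $J$ is closed, this forces $a\in J(f)$, a contradiction.

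\textbf{(2)$\Rightarrow$(1).} The first step is to show that for every Fatou component $V$ the map $f\colon V\to V'$ is proper of finite degree. Since there are no asymptotic values, $f\colon V\sm f^{-1}(S(f))\to V'\sm S(f)$ is an unbranched covering. It is branched over only finitely many critical values in $V$, so if the degree were infinite, a standard argument (lifting a loop, or comparing with the universal cover of a punctured disk, which behaves like $\exp$) would produce an asymptotic path, and hence an asymptotic value in $V'$.

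The second step handles the periodic components, say an immediate basin $U$ of period $p$. Here $f^p\colon U\to U$ is proper of finite degree $d$, so it is conformally conjugate to a Blaschke product. I would use the expansion near $J(f)$ to show that $\partial U$ is a quasicircle, arguing as for hyperbolic polynomials. Equipotential-type annuli $A_n=f^{-pn}(A_0)\cap U$, where $A_0$ is a fundamental annulus outside $K$, are pulled back with degree at most $d$. Koebe distortion then gives them bounded geometry and exponentially shrinking hyperbolic diameter, which yields boundedness of $U$, local connectivity, and finally a quasisymmetric conjugacy of $\partial U$ with $z\mapsto z^d$ on $\Circle$.

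The third step handles the preperiodic components, which are reached by finitely many proper pullbacks of finite degree. Here I would transport the quasicircle property via the local quasiconformal extension of the branch $f^{-k}$ near $\partial V$. There are two cases: if $\overline V$ contains no critical point on its boundary the branch is univalent; otherwise the criticality on $\partial V$ is bounded because $P(f)\cap J=\emptyset$. Boundedness of $V$ follows since $V$ is a proper preimage of a bounded component under a map with no asymptotic values.

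\textbf{Main obstacle.} I expect the hardest step to be the quasidisk estimate, which needs uniform control of the degrees of pullbacks. The pieces of $\partial U$ under pullback must have uniformly bounded distortion. This requires a uniform bound on the number of critical points met along pullback chains, which I would obtain from the finiteness of critical points in each of the finitely many components meeting $K$.
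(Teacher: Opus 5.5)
The paper gives no proof of this statement; it quotes it from \cite{BFR15_hyperbolic}, so there is no internal argument to compare with. Your direction (1)$\Rightarrow$(2) is correct. For (2)$\Rightarrow$(1) the architecture is the standard one. It has three steps: properness of $f\colon V\to V'$ (this is \cite[Proposition~2.8]{BFR15_hyperbolic}, used in Lemma~\ref{Lem:UnivComp}); boundedness of periodic components from expansion in the hyperbolic metric of $\C\sm K$; and transfer to preperiodic components. The boundedness step works because $\infty$ lies at infinite distance in that metric, so summable bounds on the pulled-back pieces confine $U$ to a compact set.

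In Step~1 you should state that $V'$ is simply connected. Then $\pi_1$ of $V'$ minus the finitely many critical values of $f|_V$ is generated by small loops around those values. Each such loop acts on the fibre with finite support: its cycles are finite because there are no asymptotic values, and almost all are trivial because $V$ has finitely many critical points. Transitivity of the monodromy then forces a finite fibre.

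\textbf{The genuine gap is in Step~2.} You pass from local connectivity directly to ``a quasisymmetric conjugacy of $\partial U$ with $z\mapsto z^d$''. Local connectivity only yields a continuous semiconjugacy $\Circle\to\partial U$, and expansion by itself does not exclude identifications. You need a separate argument that $\partial U$ is a Jordan curve, for instance the following.
\begin{itemize}
\item If two internal rays land at a common point, they bound a Jordan domain $\Delta$.
\item The boundary extension of the Riemann map cannot be constant on an arc, so $\Delta$ contains points of $\partial U\subset J(f)$.
\item But $f^{pn}(\partial\Delta)\subset\ovl U$, which is bounded. By the maximum principle the iterates $f^{pn}$ are uniformly bounded on $\Delta$, so $\Delta\subset F(f)$, a contradiction.
\end{itemize}
Only after this do Koebe estimates for univalent pullbacks of small pieces give bounded turning, i.e.\ a quasicircle. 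These pullbacks are univalent because there are no critical points near $J(f)$.

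\textbf{Step~3 and your ``main obstacle'' are aimed at the wrong thing.} For hyperbolic $f$ there are no critical points on $J(f)$ at all, since their images would be singular values in $J(f)$. So your second case is vacuous. In the first case, $f^k$ near $\partial V$ is not univalent. It is a covering of degree $D_V=\deg(f^k\colon V\to U)$ from an annular neighbourhood of $\partial V$ onto one of $\partial U$; you must also check that this component of the preimage of an annular neighbourhood of $\partial U$ is an annulus. The quasicircle property is transferred by lifting a quasiconformal parametrization of $\partial U$ through $z\mapsto z^{D_V}$, exactly as in the proof of Proposition~\ref{Prop:4David}. Boundedness of $V$ also needs an argument, e.g.\ pulling the chains of pieces from Step~2 back into $V$.

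Since each component is treated separately, this statement needs no uniform degree bound. Your proposed source of uniformity would not work anyway. A critical point lies in a component $V$ with $f(V)\cap S(f)\neq\emptyset$, not necessarily in a component meeting $K$. There can be infinitely many such $V$, and (2) bounds their critical points only individually. Uniformity is what Proposition~\ref{Prop:4David} needs, which is why Definition~\ref{Def:Kattr} imposes the extra bound $N$.
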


Motivated by this result, we introduce the class of hyperbolic maps that will be used in the attracting construction.
\begin{defn}[Class $\mathscr K_\attr$]
	\label{Def:Kattr}
	We say that $f\in\mathscr K_\attr$ if $f$ is hyperbolic, has an attracting fixed point, satisfies one of the equivalent conditions of Theorem~\ref{thm:bounded_hyperbolic}, and there exists $N\in\mathbb N$ such that every component of $F(f)$ contains at most $N$ critical points, counted with multiplicity. For $d\ge2$, define $\mathscr K_{\attr}(d):=\{f\in\mathscr K_{\attr}:\deg(f\colon U\to U)=d\}$, where $U$ is the marked invariant immediate attracting basin.
\end{defn}

\begin{lemma}[Uniform degree bound]
	\label{Lem:UnivComp}
	Let $f\in\mathscr K_\attr$. Then there exists a constant $L\geq1$ such that
	the following holds. If $U$ is a periodic Fatou component and $V$ is an
	iterated preimage component of $U$, and if $k\geq0$ is minimal with
	$f^k(V)=U$, then
	\[
	\deg(f^k\colon V\to U)\leq L.
	\]
\end{lemma}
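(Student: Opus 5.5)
The plan is to factor $f^k\colon V\to U$ through the intermediate components $V_j:=f^j(V)$, $j=0,\dots,k$, and to bound how many of the maps $f\colon V_j\to V_{j+1}$ can have degree greater than one. Since $f$ has no asymptotic values (Theorem~\ref{thm:bounded_hyperbolic}) and every Fatou component is bounded and simply connected, each restriction $f\colon V_j\to V_{j+1}$ is a proper map between bounded simply connected domains. By Riemann--Hurwitz its degree is $1+c_j$, where $c_j$ is the number of critical points of $f$ in $V_j$ counted with multiplicity. Definition~\ref{Def:Kattr} gives $c_j\le N$. Hence
\[
\deg(f^k\colon V\to U)=\prod_{j=0}^{k-1}(1+c_j)\le (N+1)^{\#\{j:\,c_j>0\}}.
\]
So it suffices to show that the number of indices $j<k$ for which $V_j$ contains a critical point is bounded independently of $V$ and $U$.

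To do this, I use the hyperbolicity structure. Take the compact set $K\subset F(f)$ from \eqref{eq:compact_hyperbolic}, which satisfies $S(f)\subset\operatorname{int}(K)$. Being compact, $K$ meets only finitely many Fatou components. Each of these lies in the basin of one of the finitely many attracting cycles, so each is eventually periodic. Let $\mathcal F$ be the finite collection of all components meeting $K$ together with all their forward images, and let $M$ be the maximal preperiod among them.

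Now suppose $c_j>0$, and let $c\in V_j$ be a critical point. Then $f(c)\in S(f)\subset K$, so $V_{j+1}\in\mathcal F$. I claim that $V_{j+1}$ must be non-periodic. Indeed, if $V_{j+1}$ were periodic, it would lie in a periodic cycle, and by forward invariance $U=V_k$ would be in the same cycle. But $k$ is minimal with $f^k(V)=U$, so no $V_i$ with $0<i\le k$ can equal $U$ other than $V_k$ itself. Following the cycle from $V_{j+1}$ to $U$ then gives a shorter path, which contradicts minimality unless $j+1=k$. The exception $j+1=k$ occurs at most once.

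Otherwise, $V_{j+1}$ is strictly preperiodic in $\mathcal F$, with preperiod at most $M$. Since the $V_i$ reach periodicity only at $U$, this forces $k-(j+1)\le M$. Therefore all indices $j$ with $c_j>0$ lie in the window $j\ge k-M-1$, so there are at most $M+1$ of them. This gives $L=(N+1)^{M+1}$.

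The main obstacle I expect is making the periodicity/minimality argument airtight. I need to check that the components $V_{j+1},\dots,V_k$ are all distinct and that preperiods measured inside $\mathcal F$ really control $k-j$. In particular, I must handle the case where $U$ is periodic of period $p>1$, in which the cycle members other than $U$ are periodic but are not $U$. The fix is to measure distance to $U$ along forward orbits instead. From a component $W\in\mathcal F$, the number of steps to first reach $U$ is at most $M+p$, where $p$ is the maximal cycle length. By minimality this number equals $k-(j+1)$. This gives the uniform bound $L=(N+1)^{M+p+1}$, which depends only on $f$.
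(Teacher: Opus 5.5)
Your proof is correct once you adopt the correction in your last paragraph, but you count the non-univalent steps differently from the paper. The paper needs no preperiods or cycle lengths. Minimality of $k$ forces the components $V,f(V),\ldots,f^k(V)=U$ to be pairwise distinct: if $f^a(V)=f^b(V)$ with $a<b\le k$, then $f^{k-b+a}(V)=U$ with $k-b+a<k$. A step can fail to be univalent only if its target is one of the finitely many components $\mathcal S$ meeting $S(f)$. Hence there are at most $\#\mathcal S$ branched steps, and $L=(N+1)^{\#\mathcal S}$.

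You instead bound the first hitting time of $U$ from any component meeting $K$, which confines all branching to the last $M+p+1$ steps of the chain. This needs an extra input, namely that those components are eventually periodic with bounded preperiod. That holds here, since $F(f)$ is a union of finitely many attracting basins. In return your argument gives more: every step of the chain except the last $M+p+1$ is univalent. This is exactly what guarantees that the integer $N$ assumed at the start of Lemma~\ref{Lem:SphEuc} exists. The paper's distinctness argument does not give this directly.

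One caution for the write-up. Your intermediate claim that $V_{j+1}$ must be non-periodic unless $j+1=k$ is false when $U$ has period $p>1$. Following the cycle from $V_{j+1}$ to $U$ is precisely the remaining part of the chain, not a shorter path. Drop that claim and go straight to the hitting-time formulation, where the sharp bound is $k-(j+1)\le M+p-1$.
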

\begin{proof}
Since $f$ is hyperbolic, by \eqref{eq:compact_hyperbolic}, $S(f)$ is compactly contained in $F(f)$ and so intersects only finitely many Fatou components. Let $\mathcal S$ be the finite collection of Fatou components that meet $S(f)$. Let $U$ be a periodic Fatou component, and let 
	\[
	V=V_k \xrightarrow{f} V_{k-1}\xrightarrow{f}\cdots
	\xrightarrow{f}V_1\xrightarrow{f}V_0=U
	\]
	be the chain of Fatou components, where $k$ is minimal. For each $j$, the map $f\colon V_j\to V_{j-1}$ is proper of finite degree by \cite[Proposition~2.8]{BFR15_hyperbolic}. If
	$V_{j-1}\cap S(f)=\emptyset$, then this map has no singular values over $V_{j-1}$; since $V_{j-1}$ is simply connected, it is univalent. Thus non-univalent steps can occur only when $V_{j-1}\in\mathcal S$. Since $k$ is minimal, the Fatou components
	$V_0,\ldots,V_k$ are pairwise distinct. Hence each component of
	$\mathcal S$ occurs at most once in the chain. Moreover, by the definition of $\mathscr K_\attr$ and the
	Riemann--Hurwitz formula, $\deg(f\colon V_j\to V_{j-1})\leq N+1$
	for every $j$. Hence $\deg(f^k\colon V\to U) \leq (N+1)^{\#\mathcal S}.$ Taking $L=(N+1)^{\#\mathcal S}$ proves the claim.
\end{proof}

The previous lemma provides uniform control on the degrees of pullbacks of periodic Fatou components. Combined with the quasidisk characterization of Theorem~\ref{thm:bounded_hyperbolic}, this allows us to obtain uniform geometric control on all Fatou components.

\begin{prop}[Uniform quasidisks] 
	\label{Prop:4David}
	Let $f\in \mathscr K_\attr$. Then, the following hold:
	\begin{enumerate}[leftmargin=8mm]
		\item \label{item:bounded_diam} For every positive $\epsilon>0$ the number of Fatou components whose diameter with respect to the spherical distance exceeds $\epsilon$ is finite.
		\item \label{item:uniform_qdisks} All Fatou components of $f$ are \textit{uniform quasidisks}, that is, there exists a constant $K\geq 1$ such that the boundary of each Fatou component of $f$ is a $K-$quasicircle.
	\end{enumerate}		
\end{prop}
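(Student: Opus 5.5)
The plan is to derive both items from hyperbolic expansion together with the uniform degree bound of Lemma~\ref{Lem:UnivComp}. Since $f$ is hyperbolic, \eqref{eq:compact_hyperbolic} provides a compact set $K\subset F(f)$ with $f(K)\subset\operatorname{int}(K)$ and $S(f)\subset\operatorname{int}(K)$. Set $\Omega:=\C\setminus K$. Then $f^{-1}(\Omega)\subset\Omega$, and $f\colon f^{-1}(\Omega)\to\Omega$ is a covering map. Hence $f$ expands the hyperbolic metric of $\Omega$ by a definite factor on compact subsets; more precisely, it uniformly expands the hyperbolic metric near $J(f)$, as in \cite{BFR15_hyperbolic,RS_hyperbolicity}. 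Since $F(f)$ consists of finitely many attracting basins, every Fatou component is an iterated preimage of one of the finitely many periodic components $U_1,\dots,U_m$, and each of these is a bounded quasidisk by Theorem~\ref{thm:bounded_hyperbolic}.

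For item~\eqref{item:uniform_qdisks}, let $V$ be a Fatou component and let $k\ge0$ be minimal with $f^k(V)=U_i$. By Lemma~\ref{Lem:UnivComp}, the map $f^k\colon V\to U_i$ is a proper map of degree at most $L$. I would choose a bounded Jordan domain $D_i\supset\overline{U_i}$ whose boundary avoids $P(f)$; this is possible because $P(f)$ is a compact subset of $F(f)$ by hyperbolicity. Next I would pull $D_i$ back along the chain $V_k\to\dots\to V_0=U_i$. Each step is branched only at critical points lying in components of the finite collection $\mathcal S$, and the total degree is at most $L$. A standard Koebe-type argument for proper maps of bounded degree then applies: either the quasiconformal bounded-turning or annulus-modulus argument, or the result of \cite[Theorem 1.10]{BFR15_hyperbolic}, which I expect is exactly of this form. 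It gives that $\partial V$ is a $K$-quasicircle with $K$ depending only on $L$, on the moduli of $D_i\setminus\overline{U_i}$ and on the quasicircle constants of $\partial U_i$. Concretely, I would check the bounded-turning (Ahlfors three-point) condition. For two points $a,b\in\partial V$, push forward by $f^k$. On the annulus around $\overline V$ obtained as a component of $f^{-k}(D_i\setminus \overline{U_i})$, the modulus is at least $\operatorname{mod}(D_i\setminus\overline{U_i})/L$, and distortion of degree-$\le L$ branched coverings on such annuli is controlled. Therefore the arcs of $\partial V$ between $a$ and $b$ have diameter comparable to $|a-b|$ with uniform constants.

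For item~\eqref{item:bounded_diam}, the plan is to use the same pullbacks. Each $V$ is contained in a component of $f^{-k}(D_i)$. Since $D_i\cap J(f)\ne\emptyset$, the metric of $\Omega$ is expanded by $f$, and the sets $f^{-k}(D_i)$ meet $J(f)$ away from $K$. Hence the spherical diameters of these components tend to $0$ as $k\to\infty$, uniformly in the branch, by the bounded degree and uniform expansion. A neighborhood of $\infty$ needs separate treatment, which I would handle by using the spherical metric together with the fact that the components are bounded. For each fixed $k$, only finitely many components of $f^{-k}(U_i)$ have spherical diameter larger than $\epsilon$, because preimage components accumulate only at $\infty$, where the spherical diameter shrinks. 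Combining the two facts gives finiteness.

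The main obstacle is the uniform distortion estimate for branched pullbacks. Here one must control the geometry uniformly even though infinitely many components are involved and $\infty$ is an essential singularity. My first attempt would be to reduce to the univalent case: the branching occurs at most $\#\mathcal S$ times, at components in the finite family $\mathcal S$. I would therefore handle those finitely many degree-$\le N+1$ steps by compactness, and treat all other steps as univalent maps with a Koebe space given by the annulus $D_i\setminus\overline{U_i}$.
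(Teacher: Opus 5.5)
\textbf{Verdict: there is a genuine gap in item~\eqref{item:uniform_qdisks}.} Your outline has the same skeleton as the paper's proof: the degree bound $L$ from Lemma~\ref{Lem:UnivComp}, a postsingular-free annulus around $\partial U_i$, and its pullback to an annulus around $\partial V$ on which $f^k$ is an unbranched covering of degree $d\le L$. However, the step that produces a \emph{uniform} constant $K$ is only asserted, and the mechanism you sketch for it does not work as stated.

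\emph{Pushing forward.} Pushing $a,b\in\partial V$ forward by $f^k$ gives no information at scales comparable to $\diam V$. The map $f^k|_{\partial V}$ is a $d$-fold covering of $\partial U_i$, so far-apart points of $\partial V$ can have the same image, and an arc of $\partial V$ can wrap around $\partial U_i$. Koebe distortion is also unavailable on discs of that size centred on $\partial V$, since they may contain critical points of $f^k$ in $V$.

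\emph{The annulus.} The collar $D_i\setminus\overline{U_i}$ lies on one side only, so it gives no control on the side of $\partial V$ facing $V$. You need an annulus straddling $\partial U_i$ whose \emph{closure} avoids $P(f)$. Requiring only $\partial D_i\cap P(f)=\emptyset$ does not make the pulled-back set an annulus covered without branching.

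\emph{The fallback.} Treating the branched steps ``by compactness'' does not apply as written. The targets of branched steps lie in the finite family $\mathcal S$, but their sources are infinitely many. For $\frac{\pi}{2}\sin z$, each critical point $-\frac{\pi}{2}+2k\pi$, $k\neq0$, lies in its own component $U+2k\pi$ mapped $2\colon1$ onto $U$. So what is needed is a uniform one-step pullback lemma, not a finiteness argument. Nor can you defer to \cite[Theorem~1.10]{BFR15_hyperbolic}: the paper uses it only for the non-uniform statement of Theorem~\ref{thm:bounded_hyperbolic}.

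\textbf{The missing idea} is the device the paper uses to avoid distortion estimates altogether. Write $U_i=\phi(\disk)$ with $\phi$ a global quasiconformal map, and take $A=\phi(\A(r,R))$ with $r<1<R$ and $\overline A\cap P(f)=\emptyset$. Then $f^k\colon\tilde A_V\to A$ is an unbranched degree-$d$ covering of annuli, hence isomorphic as a covering to $\phi\circ P_d$ on $\A(r^{1/d},R^{1/d})$. So $\phi$ lifts to $h_V$ with $f^k\circ h_V=\phi\circ P_d$; this lift has the same dilatation as $\phi$ and satisfies $h_V(\Circle)=\partial V$. An extension lemma for quasiconformal maps of round annuli \cite{EFGP24} then extends $h_V$ to a quasiconformal self-map of $\C$. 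Its dilatation depends only on that of $\phi$, on $r,R$, and on $d\le L$, and is therefore uniform because there are finitely many $U_i$.

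Your idea of handling the unbranched stretches by Koebe distortion is sound and combines well with this. It suffices to apply the one-step lemma at the last branched step, after which the pullback is univalent with definite Koebe space. That route would make Lemma~\ref{Lem:UnivComp} unnecessary here.

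\textbf{Item~\eqref{item:bounded_diam}.} The paper simply cites the proof of \cite[Theorem~2.5]{BFR15_hyperbolic}. Your expansion argument is a reasonable reconstruction, with two repairs. First, $D_i\supset U_i$ meets $K$, so estimate the $\Omega$-length of the boundary of the pulled-back domain, which does lie in $\Omega$. This uses expansion along its orbit in $f^{-1}(\Omega)$, not only on $J(f)$. Second, justify the per-level finiteness, via local connectivity of $\overline{U_i}$ and local injectivity of $f^k$ near $J(f)$.
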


\begin{proof}
Item~\eqref{item:bounded_diam} follows from the proof of \cite[Theorem~2.5]{BFR15_hyperbolic}, which verifies condition~(b) of \cite[Lemma~2.3]{BFR15_hyperbolic}; compare \cite{Bergweiler_morosawa} for similar results for \textit{semihyperbolic} maps.
		
To prove \eqref{item:uniform_qdisks}, note that since $f$ is hyperbolic, there are only finitely many periodic
Fatou components. Let $\mathcal U_0$ denote this finite collection. By
Theorem~\ref{thm:bounded_hyperbolic}, each $U \in \mathcal U_0$ is a quasidisk, and so there is a quasiconformal map $\phi_U\colon\C\to\C$ such that $\phi_U(\D)=U$. Since $S(f)\Subset F(f)$ and $\partial U\subset J(f)$, we may choose
numbers $0<r_U<1<R_U$ such that the annulus
$A_U:=\phi_U(\A(r_U,R_U))$
contains $\partial U$ and satisfies $\overline{A_U}\cap S(f)=\emptyset.$

Let $V$ be a component of some iterated preimage of $U\in\mathcal U_0$, and let $k\geq0$ be minimal such that $f^k(V)=U$. By
Lemma~\ref{Lem:UnivComp}, the degree $d:=\deg(f^k\colon V\to U)$
is bounded above by a constant $L$ independent of $V$, $U$, and $k$. Let $\widetilde A_V$ be the component of $f^{-k}(A_U)$ that contains
$\partial V$. Since $A_U\cap S(f)=\emptyset$, the restriction $f^k:\widetilde A_V\to A_U$ is a covering map. Since $A_U$ is doubly connected and $\deg(f^k|_{\widetilde A_V})=d<\infty$, Lemma~2.7 of
\cite{BFR15_hyperbolic} implies that $\widetilde A_V$ is also doubly
connected. Hence $f^k:\widetilde A_V\to A_U$ is a covering map of degree $d$ between annuli. Comparing this covering with
\[
P_d(z)=z^d\colon \A(r_U^{1/d},R_U^{1/d})\to \A(r_U,R_U),
\]
we can lift $\phi_U$ to a quasiconformal map $h_V\colon \A(r_U^{1/d},R_U^{1/d})\to \widetilde A_V$
satisfying $f^k\circ h_V=\phi_U\circ P_d.$ By \cite[Lemma~3.8]{EFGP24}, the map $h_V$ extends to a quasiconformal
self-map of $\C$ whose dilatation depends only on the quasiconformal constant
of $\phi_U$, on $r_U,R_U$, and on $d$.

Since $U$ ranges over the finite family $\mathcal U_0$ and $d\leq L$, the extension constants are uniformly bounded. Hence
there exists $K\geq1$ such that every Fatou component which eventually maps to
a periodic Fatou component is a $K-$quasidisk. Since every Fatou component of
a hyperbolic entire map eventually maps to a periodic Fatou component, all
Fatou components are $K-$quasidisks.
\end{proof}

\section{Mateable Fuchsian groups and factor Bowen--Series maps}
\label{Sec:MateableGroups}

The purpose of this section is to collect the group--side results that will be
used in the construction of the correspondences.  We do not prove any new
results here.  Rather, we recall the relevant notions and state the known
theorems in a form adapted to our setting.  

The point is to encode the boundary
dynamics of the Fuchsian group by a continuous expansive circle map. These circle maps play the role of external maps on the group side: they are the analogues of the map $z\mapsto z^d$ for a polynomial with the connected Julia set. The difference is that
the group--side external maps are usually only piecewise analytic and have
finitely many singular points where the local dynamics is parabolic. The circle maps we need come from the factor Bowen--Series construction of \cite{MM, MM_survey}.  

Throughout this section, the integer $d$ denotes the degree of the circle map.

\subsection{Mateable circle maps}
\label{SSec:MateableCircleMaps}

The circle maps that enter our mating constructions belong to a class
introduced in \cite[\S2]{MM_survey}.  For $A:\Circle\to\Circle$ a continuous
map, the \emph{grand orbit} of a point $x\in\Circle$ is
\[
\operatorname{GO}_A(x)
=
\{y\in\Circle:\ A^m(x)=A^n(y)\text{ for some }m,n\ge0\}.
\]
If $\Gamma<\Aut(\D)$ is a Fuchsian group with limit set
$\Lambda(\Gamma)=\Circle$, we say that $A$ is \emph{orbit equivalent} to
$\Gamma$ on $\Lambda(\Gamma)$ if
\[
\operatorname{GO}_A(x)=\Gamma\cdot x,
\qquad
x\in\Lambda(\Gamma).
\]
That is, the dynamics of $A$ records the same boundary orbit relation as the
action of $\Gamma$. The following definition is taken from \cite[Definition~2.1]{MM_survey}, \cite[Definition~2.10]{MM24}.

\begin{defn}[Mateable and virtually mateable maps]
	\label{Def:MateableCircleMap}
\leavevmode\par
\begin{itemize}	[leftmargin=8mm]
\item	A continuous map $A:\Circle\to\Circle$ is called a \emph{mateable map
		associated with a Fuchsian group $\Gamma$} if:
	\begin{enumerate}[leftmargin=6mm]
		\item $A$ is orbit equivalent to $\Gamma$;
		\item $A$ is piecewise analytic on $\Circle$;
		\item $A$ is an expansive covering map of degree greater than one;
		\item $A$ is Markov;
		\item no periodic break point of $A$ is asymmetrically hyperbolic.
	\end{enumerate}
    \smallskip
    
\item A continuous map $A:\Circle\to\Circle$ is said to be a \emph{virtually mateable map associated with a Fuchsian group $\Gamma$} if the following hold:
	\begin{enumerate}[leftmargin=6mm]
		\item $A$ is \emph{virtually orbit equivalent} to $\Gamma$; i.e., $A$ is a factor of a possibly discontinuous circle map $\widetilde{A}$ such that $\widetilde{A}$ is orbit equivalent to a finite index subgroup of $\Gamma$;
        \item $A$ is piecewise analytic on $\mathbb{S}^1$;
		\item $A$ is an expansive covering map of degree greater than one.
		\item $A$ is virtually Markov; 
		\item No periodic break point of $A$ is asymmetrically hyperbolic.
\end{enumerate}
\end{itemize}
\end{defn}

We briefly explain the terminology appearing in the definition.  A map is
\emph{piecewise analytic} if $\Circle$ can be decomposed into finitely many
closed arcs such that the restriction of the map to each arc extends
analytically to a neighbourhood of that arc.  The endpoints of the maximal such
arcs are called the \emph{break points} of the map.  The \emph{Markov}
condition requires that the break points belong to a finite forward--invariant
set, yielding a Markov partition of the circle. On the other hand, being virtually Markov means that there exists $n \in \N$ such that the  $n-$fold preimages of the maximal connected subsets of $\mathbb{S}^1$ on which $A$ is genuinely analytic give a Markov partition of $\mathbb{S}^1$ for $A$. The final condition excludes a
certain pathology at periodic break points, namely that the two one-sided
analytic germs have incompatible hyperbolic behavior. We refer to
\cite[\S2]{MM_survey} for the precise formulation.

A map $A:\Circle\to\Circle$ is called \emph{piecewise M\"obius}
if there are closed arcs $I_1,\ldots,I_k\subset\Circle$, with pairwise disjoint
interiors and union equal to $\Circle$, and elements
$g_1,\ldots,g_k\in\Aut(\D)$ such that
\[
A|_{I_j}=g_j|_{I_j},
\qquad
j=1,\ldots,k.
\]
If the group generated by these pieces,
$\Gamma_A:=\langle g_1,\ldots,g_k\rangle<\Aut(\D)$, is discrete, then $A$ is
called \emph{piecewise Fuchsian}.

\subsection{Factor Bowen--Series maps}
\label{SSec:FactorBowenSeriesMaps}

The main source of examples of mateable maps is provided by the factor
Bowen--Series construction of \cite{MM_survey,MM,MV25}; see also \cite[\S 13]{LLM24} for the formulation most relevant to the present paper.  We first recall the
ordinary Bowen--Series construction, and then explain how the factor
construction produces the group--side systems used later in
Sections~\ref{Sec:Combination} and~\ref{corr_sec}.

Let $\Gamma<\Aut(\D)$ be a finitely generated Fuchsian group with $\Lambda(\Gamma)=\mathbb{S}^1$,
and let $R\subset\D$ be a possibly ideal polygonal fundamental domain for
$\Gamma$. Denote the sides of $R$,
in cyclic order, by $s_1,\ldots,s_k$. Each side $s_i$ is paired with some side $s_j$ (where $i$ and $j$ may be the same) of $R$ by a unique element $h(s_i)\in\Gamma$. The set
of side-pairing transformations $h(s_1),\ldots,h(s_k)$ generate
$\Gamma$. For each side $s_i$, let $C(s_i)$ be the complete hyperbolic geodesic containing $s_i$. Let $P_i,Q_{i+1}\in\mathbb S^1$ be the endpoints of $C(s_i)$, with $P_i$ preceding $Q_{i+1}$ in the counter--clockwise order. Thus the points occur on $\mathbb S^1$ in the cyclic order $P_1,Q_1,P_2,Q_2,\ldots,P_k,Q_k$. Here, the indices are taken
modulo $k$.

\begin{defn}[Bowen--Series map]
	\label{Def:BowenSeriesMap}
	The \emph{Bowen--Series map} associated with the Fuchsian group
	$\Gamma$ and the chosen fundamental domain $R$ is the piecewise M\"obius
	map $A_{\Gamma,R}^{\mathrm{BS}}:\mathbb S^1\to\mathbb S^1$ defined by
	$A_{\Gamma,R}^{\mathrm{BS}}(x)=h(s_i)(x)$ for
	$x\in [P_i,P_{i+1})$, where $[P_i,P_{i+1})\subset\mathbb S^1$ denotes
	the counter-clockwise arc from $P_i$ to $P_{i+1}$. When the fundamental
	domain is fixed, we write simply $A_{\Gamma}^{\mathrm{BS}}$, or
	$A^{\mathrm{BS}}$.
\end{defn}

\noindent According to \cite{BowenSeries79}, the map $A_{\Gamma}^{\mathrm{BS}}$ is orbit equivalent to the Fuchsian group $\Gamma$, except possibly at finitely many points	modulo the action of $\Gamma$. We refer the reader to \cite{BowenSeries79} for more details.

Note that $A_{\Gamma,R}^{\mathrm{BS}}$ admits a canonical extension to a subset of the closed unit disk. Namely, if $D_i$ denotes the closed region bounded by the arc $[P_i,P_{i+1}]\subset\mathbb S^1$ and the hyperbolic geodesic connecting $P_i, P_{i+1}$, then the extension is defined as 
$$
\widehat A_{\Gamma,R}^{\mathrm{BS}}|_{D_i\setminus\{P_{i+1}\}}:=h(s_i)|_{D_i\setminus\{P_{i+1}\}}.
$$ 
This yields a piecewise M\"obius extension of $A_{\Gamma,R}^{\mathrm{BS}}$ to the domain $\mathcal D_{\Gamma,R}:=\bigcup_i D_i$.

Ordinary Bowen--Series maps are generally not continuous at the endpoints of
the partition arcs. However, it is continuous for suitable choices of fundamental domains for Fuchsian groups that uniformize punctured spheres with up to two order $2$ orbifold points. In order to allow for an orbifold point of arbitrary order (that puts Hecke groups in the mating framework), one designs a factoring construction that
removes such jump discontinuities. The idea is to pass first to a
cyclic cover on which the Bowen--Series polygon has a rotational symmetry, and
then to quotient the Bowen--Series map by this symmetry.  In the symmetric
model, the possible jumps occur at the endpoints of the rotational sectors; on
the quotient these endpoints are identified, and the descended boundary map is
continuous.

We now describe the orbifolds for which this construction
will be used.

\begin{defn}[Class of orbifolds for factor Bowen--Series maps]
	\label{Def:OrbifoldClassForFBS}
	Let $\mathscr S$ denote the collection of finite area hyperbolic surfaces/orbifolds $\Sigma$
	satisfying all of the following properties:
	\begin{enumerate}
		\item $\Sigma$ has genus zero;
		\item $\Sigma$ has at least one puncture;
		\item $\Sigma$ has at most two orbifold points of order $2$;
		\item $\Sigma$ has at most one orbifold point of order $\nu\ge3$.
	\end{enumerate}
Note that $\mathscr{S}$ contains all punctured spheres.
	For $\Sigma\in\mathscr S$, set
	\[
	n(\Sigma)=
	\begin{cases}
		\nu, & \text{if $\Sigma$ has an orbifold point of order $\nu\ge3$,}\\
		1, & \text{otherwise.}
	\end{cases}
	\]
	Let $\delta_1(\Sigma)$ be the number of punctures of $\Sigma$, and let
	$\delta_2(\Sigma)\in\{0,1,2\}$ be the number of order--two orbifold points.  Set
	\[
	p(\Sigma)=
	\begin{cases}
		2(\delta_1(\Sigma)-1), & \text{if } \delta_2(\Sigma)=0,\\
		2\delta_1(\Sigma)-1, & \text{if } \delta_2(\Sigma)=1,\\
        2\delta_1(\Sigma), & \text{if } \delta_2(\Sigma)=2.
	\end{cases}
	\]
	Finally, set
	\[
	m(\Sigma):=n(\Sigma)p(\Sigma),
	\qquad
	d(\Sigma):=m(\Sigma)-1.
	\]
\end{defn}

The meaning of these numbers is as follows.  The integer $n(\Sigma)$ is the
order of the cyclic symmetry used in the construction.  If $n(\Sigma)=1$, no
covering is needed and we set $\widetilde\Sigma=\Sigma$.  If $n(\Sigma)\ge3$,
the construction of \cite[\S13]{LLM24} first replaces $\Sigma$ by an
$n(\Sigma)-$fold cyclic cover
$\widetilde\Sigma\to\Sigma$.  Topologically, this cover is obtained by cutting
$\Sigma$ along a geodesic joining the order$-n(\Sigma)$ orbifold point to a cusp
and gluing $n(\Sigma)$ copies cyclically.

In either case, the covering orbifold $\widetilde\Sigma$ is uniformized by a
Fuchsian group admitting a preferred ideal polygonal fundamental domain with
$m(\Sigma)=n(\Sigma)p(\Sigma)$ sides.  The polygon is divided by the cyclic
symmetry into $n(\Sigma)$ congruent sectors, and each sector contains
$p(\Sigma)$ sides.  Thus $p(\Sigma)$ is the number of sides, equivalently ideal
vertices, that remain after passing to the quotient by the cyclic symmetry.  The
boundary degree of the resulting factor Bowen--Series map is
$d(\Sigma)=n(\Sigma)p(\Sigma)-1$ (see below). The hyperbolicity of $\Sigma$ implies that the preferred polygon on the cyclic cover has at least three sides,
so $n(\Sigma)p(\Sigma)\ge3$.

We now pass from the orbifold notation to the group notation that will be used
later.  Fix $\Sigma\in\mathscr S$, and let $\Gamma<\Aut(\D)$ be a Fuchsian
group uniformizing $\widetilde\Sigma$, so that
$\widetilde\Sigma\simeq\D/\Gamma$.  The symmetric Bowen--Series construction
can be carried out so that $\Gamma$ admits a closed ideal polygon
$\Pi\subset\overline\D$ as a fundamental domain, where $\Pi$ has
$m(\Sigma)=n(\Sigma)p(\Sigma)$ sides and is invariant under the rotation
\[
M_\omega(z)=\omega z,
\qquad
\omega=e^{2\pi i/n(\Sigma)}.
\]
Here \emph{symmetric} means that both the polygon and its side-pairing pattern
are preserved by the cyclic group $\langle M_\omega\rangle$.  Choose one
rotational sector and label the $p(\Sigma)$ sides of $\Pi$ in that sector by
$s_1,\ldots,s_{p(\Sigma)}$.  If $h_\ell\in\Gamma$ pairs the side $s_\ell$, then
the rotated side $M_\omega^j(s_\ell)$ is paired by
$M_\omega^j\circ h_\ell\circ M_\omega^{-j}$, for
$j=0,\ldots,n(\Sigma)-1$.  Equivalently, the Bowen--Series map
$A^{\mathrm{BS}}_\Gamma:
\overline{\D}\setminus \operatorname{Int}\Pi
\longrightarrow
\overline{\D}$
commutes with $M_\omega$, and $M_\omega$ normalizes $\Gamma$, i.e.
$M_\omega\Gamma M_\omega^{-1}=\Gamma$.  Hence
\[
G:=\langle \Gamma,M_\omega\rangle
\]
is a Fuchsian group containing $\Gamma$ as a normal subgroup of index
$n(\Sigma)$.  The group $\Gamma$ uniformizes the cyclic cover
$\widetilde\Sigma$, while $G$ uniformizes the original orbifold $\Sigma$.  A
closed sector
\[
\Pi_G:=\Pi\cap \{0\leq \arg z\leq 2\pi/n(\Sigma)\}
\]
is a fundamental domain for the action of $G$.  When $n(\Sigma)=1$, no
extension is needed; in this case $G=\Gamma$ and $\Pi_G=\Pi$. Identifying points in the same $\langle M_\omega\rangle-$orbit gives a quotient
map
$
q_\omega:\overline{\D}\longrightarrow\overline{\D},
$
which in standard coordinates is $q_\omega(z)=z^{n(\Sigma)}$.  If
$n(\Sigma)=1$, then $q_\omega=\Id$.

\begin{figure}[ht]
	\captionsetup{width=0.98\linewidth}
	\begin{tikzpicture}
		\node[anchor=south west,inner sep=0] at (0.5,0) {\includegraphics[width=0.4\linewidth]{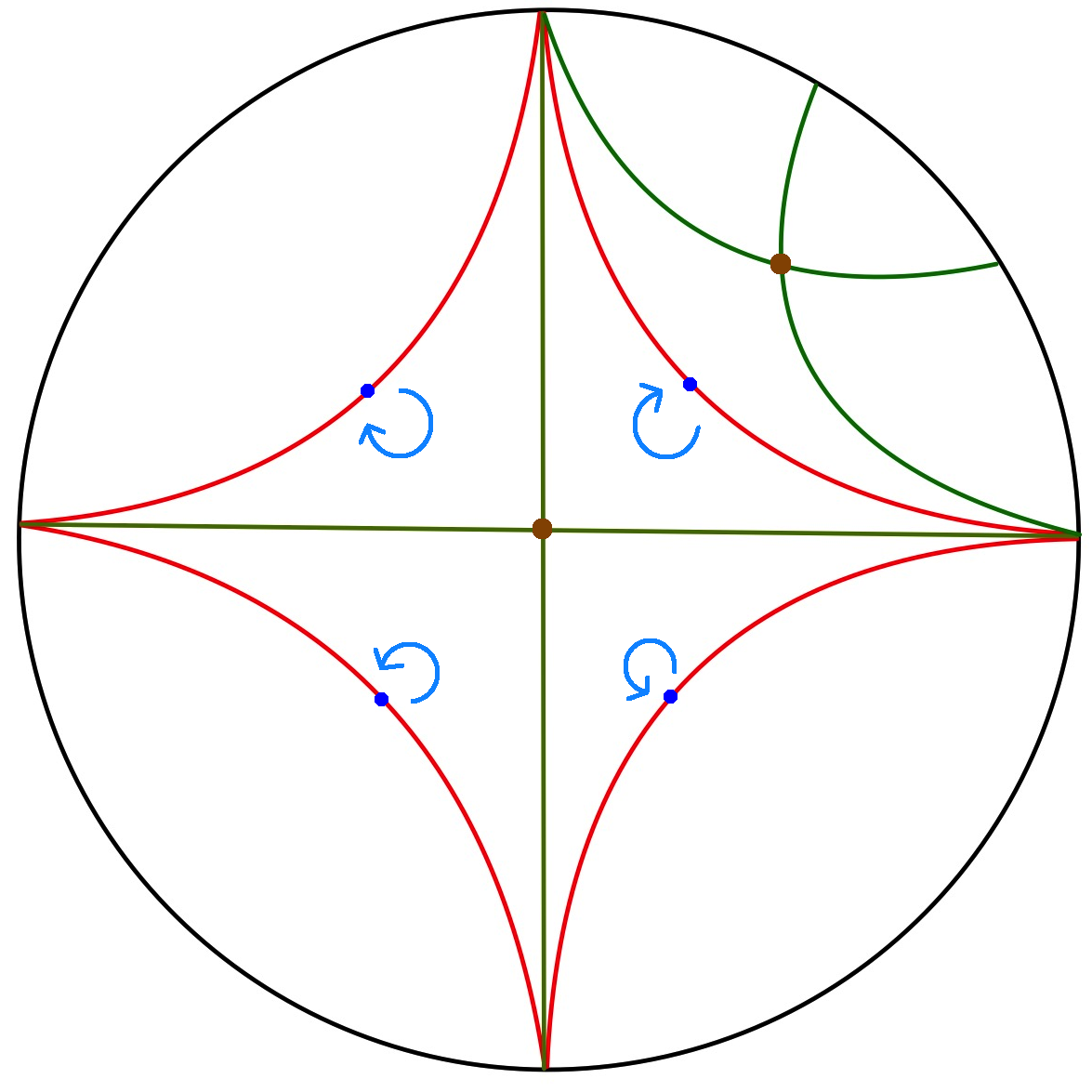}};
		\node[anchor=south west,inner sep=0] at (7,0) {\includegraphics[width=0.4\linewidth]{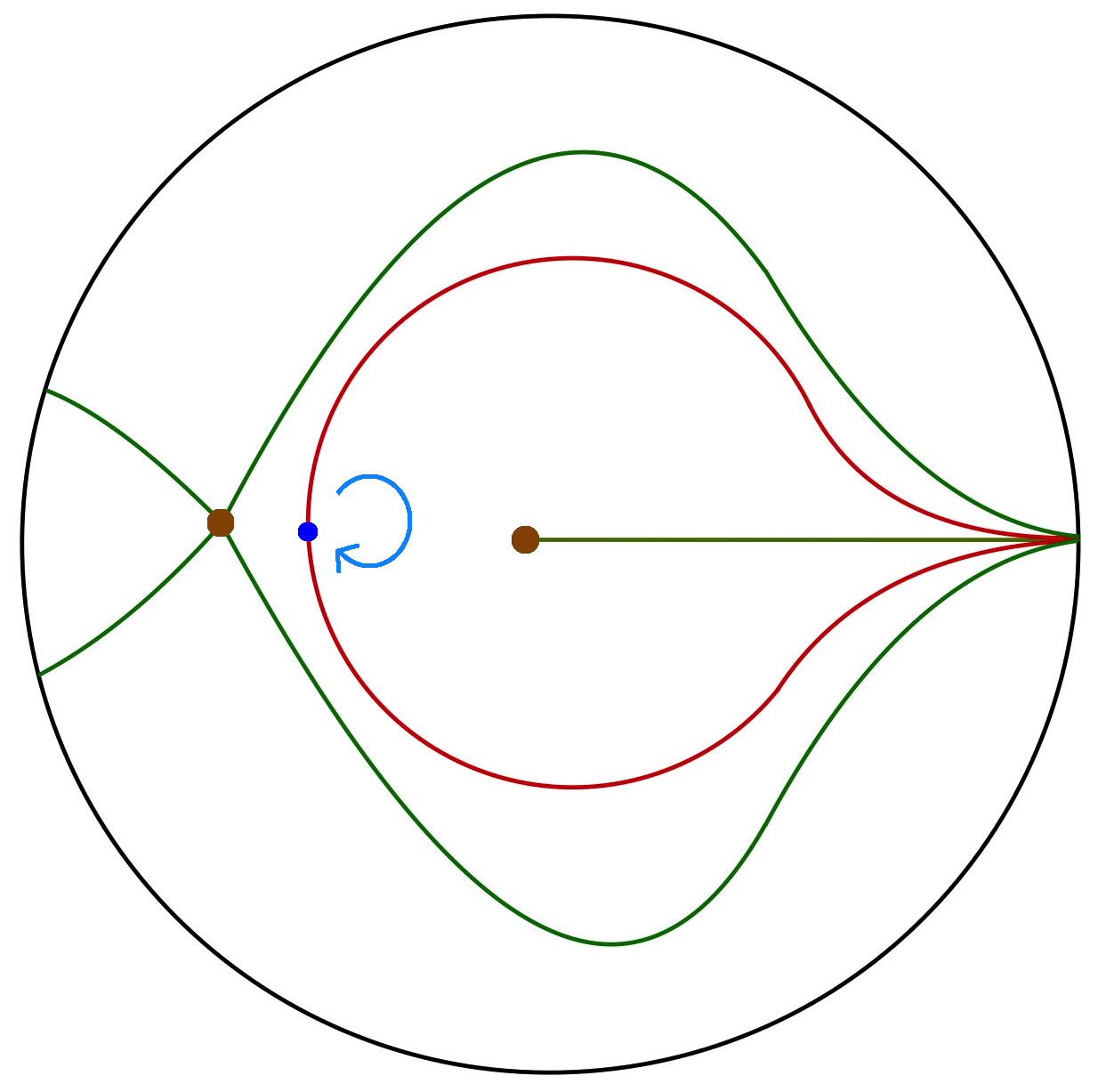}};
		\node at (4.2,3.25) {\begin{small}$h_{1}$\end{small}};
		\node at (2.7,3.25) {\begin{small}$h_{2}$\end{small}};
		\node at (2.7,2.6) {\begin{small}$h_{3}$\end{small}};
		\node at (4.2,2.6) {\begin{small}$h_{4}$\end{small}};
		\node at (3.7,4) {$\Pi$};
		\node at (10,2.2) {$T_G$};
		\node[fill=white,inner sep=1pt] at (4.55,4.2) {$\widetilde c$};
		\node[fill=white,inner sep=1pt] at (3.22,2.72) {$0$};
		\node[fill=white,inner sep=1pt] at (8.15,3.35) {$c$};
		\node[fill=white,inner sep=1pt] at (9.8,3.2) {$v$};
	\end{tikzpicture}
	\caption{Construction of a factor Bowen--Series map in the fully ramified
		Hecke example with $n=4$ and $p=1$.  Left: the preferred fundamental domain
		$\Pi$ for the covering orbifold $\widetilde\Sigma$ and the associated
		Bowen--Series map $A^{\mathrm{BS}}_\Gamma$.  The four sides of $\Pi$ are
		paired with themselves by the transformations $h_1,\ldots,h_4$, obtained from one another by
		rotation through powers of $M_i(z)=iz$.  The blue arrows indicate the
		corresponding Bowen--Series branches.  The straight green cross is the union
		$P=\bigcup_{j=0}^3\{r i^j:0\le r\le1\}$ of radial lines.  In the branch region where the Bowen--Series
		map is $h_1$, the curved green arcs shown form the preimage
		$h_1^{-1}(P)$; these arcs meet at
		$\widetilde c=h_1^{-1}(0)$.  Right: after quotienting by
		$\langle M_i\rangle$, the four rotational sectors are identified and the
		factor tile $T_G=q_i(\operatorname{Int}\Pi)$, 
		where $q_i(z)=z^4$, is an ideal monogon.  The
		factor Bowen--Series map
		$F_G:\overline{\D}\setminus T_G\to\overline{\D}$ satisfies
		$F_G\circ q_i=q_i\circ A^{\mathrm{BS}}_\Gamma$.  The point
		$c=q_i(\widetilde c)$ is the unique critical point of $F_G$; its
		multiplicity is three.  The point $v=q_i(0)=0$ is the corresponding
		critical value.}
	\label{hecke_factor_bs_fig}
\end{figure}

\begin{defn}[Factor Bowen--Series map]
	\label{Def:FactorBowenSeries}
	Let $G=\langle \Gamma,M_\omega\rangle$ be obtained from an orbifold
	$\Sigma\in\mathscr S$ by the construction above. Let
	$
	T_G:=q_\omega(\operatorname{Int}\Pi).
	$
	The \emph{factor Bowen--Series map} associated with $G$ is the unique map
	$
	F_G:\overline{\D}\setminus T_G\longrightarrow\overline{\D}
	$
	such that
	\[
	F_G\circ q_\omega
	=
	q_\omega\circ A^{\mathrm{BS}}_\Gamma
	\quad
	\text{on }
	\overline{\D}\setminus \operatorname{Int}\Pi.
	\]
	We call $T_G$ the \emph{factor tile}.  (See Figure~\ref{hecke_factor_bs_fig}.)
\end{defn}

The following theorem collects the properties of factor Bowen--Series maps that
will be used later.

\begin{thm}[Factor Bowen--Series maps;
	{\cite[Proposition~2.5]{MM},
		\cite[Theorem~2.6]{MM_survey},
		\cite[\S13]{LLM24}}]
	\label{Thm:FBSProperties}
	Let $G=\langle \Gamma,M_\omega\rangle$ be obtained from an orbifold
	$\Sigma\in\mathscr S$ by the construction above, and let
	$F_G:\overline{\D}\setminus T_G\to\overline{\D}$ be the associated factor
	Bowen--Series map.  Set $n=n(\Sigma)$, $p=p(\Sigma)$, and $d=np-1$.
	Then the following properties hold.
	\begin{enumerate}
		\item The map $F_G$ is holomorphic on the interior of its domain and
		extends continuously to each boundary arc of $\partial T_G$.  Its boundary
		map $F_G|_{\Circle}:\Circle\to\Circle$ is a virtually mateable circle map (in fact, a mateable circle map if $n=1$).  In
		particular, it is a continuous piecewise analytic, orientation--preserving, expansive covering map of degree $d$.  Hence, it is topologically
		conjugate to $z\mapsto z^d$ on $\Circle$.
		
		\item The boundary map $F_G|_{\Circle}$ is virtually orbit equivalent to the
		Fuchsian group $G$. 
		
		\item The factor tile $T_G\subset\D$ is simply connected, and
		the closure $\overline{T_G}=q_\omega(\Pi)$ in $\ovl{\disk}$ is a closed ideal
		polygonal disk.  Its singular boundary points are precisely
		$\Ss_G:=\partial T_G\cap\Circle$, and $\#\Ss_G=p$.
		
		\item The restriction $F_G \colon \partial T_G\to\partial T_G$ is an
		orientation-reversing involution.  In particular, $F_G$
		preserves $\Ss_G$, and each point of $\Ss_G$ is a parabolic point of
		period one or two for the boundary dynamics.
		
		\item The quotient map $q_\omega$ satisfies
		$q_\omega^{-1}(T_G)=\operatorname{Int}\Pi$, and the semiconjugacy relation
		$F_G\circ q_\omega=q_\omega\circ A^{\mathrm{BS}}_\Gamma$ holds on
		$\overline{\D}\setminus\operatorname{Int}\Pi$.
 		\item If $n=1$, no critical points are created by the quotient.  If
		$n\ge3$, then $F_G$ has $p$ critical points in the interior of its domain,
		each of local degree $n$, equivalently of multiplicity $n-1$. All these critical points
		are mapped to the same critical value $q_\omega(0)=0$. 
	\end{enumerate}
\end{thm}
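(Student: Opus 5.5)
The plan is to derive each item from the symmetric Bowen--Series map $A^{\mathrm{BS}}_\Gamma$ on the cyclic cover. The facts about $A^{\mathrm{BS}}_\Gamma$ itself are taken from \cite{BowenSeries79,MM}, and they descend to $F_G$ through the semiconjugacy $F_G\circ q_\omega=q_\omega\circ A^{\mathrm{BS}}_\Gamma$. When $n=1$ there is nothing to descend: $q_\omega=\Id$, and the statements reduce to the known properties of Bowen--Series maps for punctured spheres with at most two order-two points, where continuity holds for the preferred polygon.

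\textbf{Step 1: $F_G$ is well defined.} Since $\Pi$ and its side pairings are $M_\omega$-invariant, $A^{\mathrm{BS}}_\Gamma$ commutes with $M_\omega$. Hence $q_\omega\circ A^{\mathrm{BS}}_\Gamma$ is constant on fibres of $q_\omega(z)=z^n$, and $F_G$ is well defined on $\overline{\D}\setminus q_\omega(\operatorname{Int}\Pi)$. The identity $q_\omega^{-1}(T_G)=\operatorname{Int}\Pi$ follows from the invariance of $\Pi$ under $M_\omega$.

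\textbf{Step 2: regularity and degree.} Holomorphy in the interior follows because $q_\omega$ is a local biholomorphism away from $0$, and $0\in\operatorname{Int}\Pi$ is removed. The main obstacle is continuity at the sector endpoints. Adjacent Bowen--Series pieces there differ by a jump, and I would check explicitly that the jump is exactly a rotation $M_\omega^j$. This uses the structure of the preferred polygon, where the sides adjacent to a sector boundary are paired by conjugates under $M_\omega$. After applying $q_\omega$, that rotation disappears.

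\textbf{Step 3: boundary degree.} Expansivity, piecewise analyticity and virtual Markov/orbit equivalence pass to the quotient. On $\Circle$, the map $A^{\mathrm{BS}}_\Gamma$ has the covering degree $m-1=np-1$ of a Bowen--Series map with $m$ sides. Because $q_\omega$ has degree $n$ on both sides of the semiconjugacy, the quotient map has degree $np-1$ as well. An expansive orientation-preserving covering of degree $d$ is conjugate to $z^d$.

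\textbf{Step 4: tile and critical points.} The tile $\overline{T_G}$ is the image of one sector of $\Pi$ with its two radial boundary segments glued. It is therefore an ideal polygon with $p$ ideal vertices. On $\partial\Pi$ the side pairings act as orientation-reversing involutions of the sides, giving the involution on $\partial T_G$ and the permutation of cusps; each cusp is parabolic of period one or two. For critical points, $F_G$ can only be critical at $q_\omega(z)$ where $A^{\mathrm{BS}}_\Gamma(z)=0$, since $A^{\mathrm{BS}}_\Gamma$ is locally Möbius. There is one such preimage of $0$ under the branch on each of the $np$ sides, and these form $p$ orbits under $M_\omega$. At each of them the local degree is $n$, and the critical value is $q_\omega(0)=0$.
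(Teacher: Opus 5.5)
The paper gives no argument for this theorem: it is stated as a compilation of \cite[Proposition~2.5]{MM}, \cite[Theorem~2.6]{MM_survey} and \cite[\S13]{LLM24}. Your plan pushes everything down from the symmetric Bowen--Series map through $F_G\circ q_\omega=q_\omega\circ A^{\mathrm{BS}}_\Gamma$. This is essentially how those sources proceed, and the parts you actually carry out are correct:
\begin{itemize}
\item well-definedness and $q_\omega^{-1}(T_G)=\operatorname{Int}\Pi$, from $M_\omega$-equivariance;
\item the degree count, read as a preimage count, since $A^{\mathrm{BS}}_\Gamma$ is not a covering when $n\ge3$;
\item the description of $\overline{T_G}$;
\item the boundary involution, from $h(s_{\sigma(i)})=h(s_i)^{-1}$;
\item the $p$ critical points $q_\omega(h(s_i)^{-1}(0))$. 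Add why $h(s_i)^{-1}(0)\in\operatorname{Int}D_i$: $h(s_i)$ carries $D_i$ onto the closed half-disk on the $\Pi$-side of the paired side, which contains $0$.
\end{itemize}
What the citation buys the paper is the full mateability package at no cost. What your reconstruction buys is a transparent proof of exactly the items used later: continuity, degree, tile, boundary involution, and the unicritical structure with critical value $0$.

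There are two soft spots.

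First, the continuity check you postpone is a short index computation. Your stated reason (adjacent sides being paired by $M_\omega$-conjugates) is too vague to carry it. Label the vertices $P_1,\dots,P_{np}$ counterclockwise with $P_{i+p}=M_\omega(P_i)$, put $s_i=[P_i,P_{i+1}]$, and write $h(s_i)(s_i)=s_{\sigma(i)}$.
\begin{itemize}
\item Orientation reversal gives $h(s_i)(P_i)=P_{\sigma(i)+1}$ and $h(s_i)(P_{i+1})=P_{\sigma(i)}$. So the two one-sided values at $P_i$ are $P_{\sigma(i-1)}$ and $P_{\sigma(i)+1}$.
\item Granting, as you implicitly do, that the preferred polygon makes $A^{\mathrm{BS}}_\Gamma$ continuous at vertices interior to a sector, we have $\sigma(i-1)=\sigma(i)+1$ for $i\not\equiv1 \pmod p$.
\item Equivariance gives $\sigma(i+p)=\sigma(i)+p$. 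Hence $\sigma(kp+1)=\sigma(kp+p)+p-1=\sigma(kp)+2p-1$.
\item So at the sector vertex $P_{kp+1}$ the jump is from $P_{\sigma(kp)}$ to $P_{\sigma(kp)+2p}=M_\omega^2(P_{\sigma(kp)})$, which $q_\omega$ kills.
\end{itemize}

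Second, ``parabolic of period one or two'' needs a reason. The period bound comes from the continuous involution of $\partial T_G$, which permutes the $p$ ideal vertices. Parabolicity holds because each one-sided return germ at $q_\omega(v)$ has the form $q_\omega\circ\gamma\circ q_\omega^{-1}$ with $\gamma\in G\setminus\{\Id\}$ fixing the cusp $v$. In a Fuchsian group, the stabilizer of a cusp contains only parabolic elements and the identity. This also settles the asymmetric-hyperbolicity axiom, which you never mention: all break points of $F_G|_{\Circle}$ lie in $\Ss_G$, where both one-sided germs are parabolic.

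By contrast, ``expansivity passes to the quotient'' is not automatic, because $A^{\mathrm{BS}}_\Gamma$ is discontinuous for $n\ge3$. It needs its own argument for $F_G$, and this is the point where the paper genuinely leans on the references.
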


\begin{rem}
	The notation for these maps varies in the literature.  In \cite{MM}, the
	factor Bowen--Series map is denoted $A^{\mathrm{fBS}}_\Gamma$, while in
	\cite[\S13]{LLM24} it is denoted $A^f_\Sigma$.  We write $F_G$ because in
	this paper it plays the role of the external map on the group side.
\end{rem}

We collect the group--side data into notation that will be used throughout the
paper.

\begin{defn}[The class $\mathscr G_{\attr}$]
	\label{Def:Gattr}
	Let $\mathscr G_{\attr}$ be the set of Fuchsian groups associated to the orbifolds in $\mathscr S$ and hence admitting the factor Bowen--Series construction described above. Thus an element $G\in\mathscr G_{\attr}$ consists of a Fuchsian group
	$G=\langle\Gamma,M_\omega\rangle$, together with the quotient data and factor
	tile $T_G$ used to construct its factor Bowen--Series map
	$F_G:\overline{\D}\setminus T_G\to\overline{\D}$.
	
	If $G$ is obtained from the orbifold $\Sigma_G\in\mathscr S$, we set
	\[
	n(G):=n(\Sigma_G),
	\qquad
	p(G):=p(\Sigma_G),
	\qquad
	d(G):=\deg(F_G|_{\Circle})=n(G)p(G)-1.
	\]
	For $d\ge2$, let
	\[
	\mathscr G_{\attr}(d)
	=
	\{G\in\mathscr G_{\attr}:d(G)=d\}.
	\]
\end{defn}

\subsection{Hecke groups and Farey maps}
\label{SSec:HeckeFarey}

The parabolic construction uses a distinguished one-parameter subfamily of
factor Bowen--Series maps, namely the Farey maps associated with Hecke groups.
The relevant feature of this subfamily is that the factor tile has a single
ideal vertex, and hence the boundary map has a unique parabolic fixed point.

Among the orbifolds in $\mathscr S$, the distinguished family is given by the genus zero orbifolds with one puncture, one order 2 orbifold point, and one orbifold point of
order $q\ge3$.  Their uniformizing groups are the classical Hecke groups,
generated in the upper half-plane by
\[
z \mapsto -\frac1z,
\qquad
z \mapsto z+2\cos\!\left(\frac{\pi}{q}\right).
\]
To match the degree of the external map, we write $H_d$ for the Hecke group with $q = d+1$. In our notation, $n(H_d)=d+1$, $p(H_d)=1$, and $d(H_d)=d$.

We denote the associated factor Bowen--Series map, called the \emph{Farey map},
by
\[
F_d:=F_{H_d}:\overline{\D}\setminus T_d\to\overline{\D},
\]
where $T_d = T_{H_d}$, and we write $\Ss_d:=\partial T_d\cap\Circle$.  With the normalization above, $\Ss_d=\{1\}$.

The properties of Farey maps that will be used later are summarized below.

\begin{prop}[Farey maps; {\cite[\S2.4]{MM}, \cite[\S4.2.3, \S5.1.2]{BLLM}}]
	\label{Prop:FareyProperties}
	The degree-$d$ Farey map $F_d$ satisfies the following properties.
	\begin{enumerate}
		\item The boundary map $F_d|_{\Circle}:\Circle\to\Circle$ is a virtually mateable
		circle map of degree $d$ in the sense of Definition~\ref{Def:MateableCircleMap}.  
        
		\item The point $1\in\Circle$ is the unique neutral periodic point of
		$F_d|_{\Circle}$.  It is fixed, and each of the two local branches of $F_d$ at $1$ extends analytically in a neighborhood of $1$.

		\item All periodic points of $F_d|_{\Circle}$ other than $1$ are repelling.
	\end{enumerate}
\end{prop}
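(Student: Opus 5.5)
The statement is Proposition~\ref{Prop:FareyProperties}, which the paper attributes to \cite{MM,BLLM}. I would prove it by reducing to the general properties of factor Bowen--Series maps in Theorem~\ref{Thm:FBSProperties}, plus an explicit look at the Hecke fundamental domain near the unique cusp. For $H_d$ we have $n=d+1$ and $p=1$.

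\textbf{Item (1).} This is a direct specialization of Theorem~\ref{Thm:FBSProperties}(1) with $d=np-1=d$. Since $n\ge3$, the map is only virtually mateable rather than mateable, which matches the statement.

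\textbf{Item (2).} Here $p=1$, so $\Ss_d$ is a single point, normalized to $1$. By Theorem~\ref{Thm:FBSProperties}(4), $F_d$ restricts to an orientation-reversing involution of $\partial T_d$ that preserves $\Ss_d$. Hence $1$ is fixed, and it is parabolic for the boundary dynamics.

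For the analytic extension of the two local branches at $1$, I would lift via $q_\omega(z)=z^{d+1}$. The preimages of $1$ are the ideal vertices of the symmetric polygon $\Pi$. Near an ideal vertex $\widetilde c$, the two adjacent arcs are mapped by the side-pairings of the two sides of $\Pi$ meeting at $\widetilde c$. These are M\"obius maps, so each branch is analytic near $\widetilde c$. They fix or permute the vertices within the $\langle M_\omega\rangle$-orbit, so that after descending one gets a parabolic germ. Descending through $q_\omega$, which is a local biholomorphism away from $0$ and the vertices lie on $\Circle$, gives two analytic germs at $1$.

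\textbf{Item (3): the other periodic points are repelling.} This is where I expect the main difficulty. The plan is to use expansivity and the piecewise Möbius structure. On $\Circle$ away from the lifted vertices, each Bowen--Series branch $h(s_i)$ restricted to its arc $[P_i,P_{i+1}]$ is strictly expanding, except at the endpoints of the isometric circle. Since $\Pi$ is an ideal polygon, the points of tangency occur only at ideal vertices. So $|(A^{\mathrm{BS}})'|>1$ on the interiors of the arcs, and $|(A^{\mathrm{BS}})'|=1$ only at the vertices.

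For a periodic orbit, the multiplier of $F_d^k$ equals that of $(A^{\mathrm{BS}})^k$ at a lift, because $q_\omega$ is a local diffeomorphism on $\Circle$. If the orbit avoids $1$, every factor exceeds $1$, so the orbit is repelling. An orbit that contains $1$ is just $\{1\}$, since $1$ is fixed.

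The delicate point is justifying strict expansion on closed arcs minus vertices. The Bowen--Series branch on $[P_i,P_{i+1})$ uses the side-pairing $h(s_i)$, whose isometric circle should be the geodesic $C(s_i)$ itself (for ideal polygons symmetric with respect to $0$). In that case the arc lies inside the isometric disk and $|h'|\ge1$, with equality only on $C(s_i)\cap\Circle$, that is, at $P_i$ and $Q_{i+1}$. I would verify this using the rotational symmetry of $\Pi$ about $0$. If that fails, I would instead cite the expansivity and Markov structure from \cite{MM} to pass to an iterate that is uniformly expanding away from the cusp.
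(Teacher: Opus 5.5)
\textbf{How your route relates to the paper.} The paper does not prove this proposition. It is imported from \cite{MM} and \cite{BLLM}, where these properties are read off from the explicit Hecke/Farey geometry. Your reduction is a legitimate self-contained route:
\begin{itemize}
\item item (1) and the fixed-and-parabolic part of (2) come from Theorem~\ref{Thm:FBSProperties} with $n=d+1$, $p=1$;
\item the two germs at $1$ are M\"obius germs at an ideal vertex, pushed down by the local biholomorphism $q_\omega$;
\item item (3) is a derivative count along lifted orbits.
\end{itemize}
The uniqueness assertion in (2) follows from (3), and you should say so explicitly. Also, a lift $\tilde x$ of a period-$k$ point $x$ is in general sent by $(A^{\mathrm{BS}}_\Gamma)^k$ to some $M_\omega^j\tilde x$, not to itself. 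What you actually use is $|(F_d^k)'(x)|=|((A^{\mathrm{BS}}_\Gamma)^k)'(\tilde x)|$. This holds because $|q_\omega'|\equiv n$ on $\Circle$, not merely because $q_\omega$ is a local diffeomorphism.

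\textbf{The step that fails as proposed.} The problem is $I(h_i)=C(s_i)$. Rotational symmetry makes $\Pi$ a regular ideal $n$-gon, and it makes the order-two side pairings $h_i$ conjugate to one another under $M_\omega$. It does not, however, locate the fixed point $y_i\in s_i$ of $h_i$. The identity $I(h_i)=C(s_i)$ says precisely that $y_i$ is the point of $s_i$ nearest $0$, since $\{z:|h'(z)|=1\}$ is the perpendicular bisector of $0$ and $h^{-1}(0)$. As far as rotational symmetry is concerned, the displacement of $y_i$ along $s_i$ is a free parameter. What forces it to vanish is the cusp (completeness) condition.

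\textbf{How to fix it.} The cleanest fix uses what you already have.
\begin{itemize}
\item Since $|q_\omega'|$ is constant on $\Circle$, the two one-sided multipliers of $F_d$ at $1$ have moduli $|h_i'(P_{i+1})|$ and $|h_i'(P_i)|$. These come from lifting the two branches near $P_{i+1}$ and near $P_i$, where $A^{\mathrm{BS}}_\Gamma=h_i$ on $[P_i,P_{i+1})$.
\item Parabolicity of $1$ therefore gives $|h_i'(P_i)|=|h_i'(P_{i+1})|=1$.
\item Write $h_i(z)=(az+\bar c)/(cz+\bar a)$ with $c\neq0$ and $|a|^2=1+|c|^2$. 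Then $|h_i'(z)|=|cz+\bar a|^{-2}$, so on $\Circle$ the equality $|h_i'|=1$ holds exactly at the two endpoints of the isometric circle. Hence $I(h_i)=C(s_i)$.
\item Since $|\bar a|>1$, the isometric disc omits $0\in\Int\Pi$. So it is the side of $C(s_i)$ containing the open arc $(P_i,P_{i+1})$, and therefore $|h_i'|>1$ on that arc.
\end{itemize}

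\textbf{Your fallback does not work.} Citing expansivity and the Markov structure would not replace this argument. Those properties do not rule out neutral, topologically repelling periodic points, so item (3) genuinely needs the pointwise derivative bound.
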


We collect the parabolic group--side data in notation parallel to
Definition~\ref{Def:Gattr}.

\begin{defn}[The class $\mathscr G_{\parab}$]
	\label{Def:Gparab}
	Let $\mathscr G_{\parab}$ be the set of the Hecke groups
	$H_d$, $d\ge2$; to each of these groups we associate its standard Farey map.  For
	$G=H_d\in\mathscr G_{\parab}$, we set
	\[
	F_G:=F_{H_d}:=F_d,
	\qquad
	T_G:=T_{H_d}:=T_d,
	\qquad
	\Ss_G:=\Ss_{H_d}:=\Ss_d=\{1\}.
	\]
	For $d\ge2$, define
	\[
	\mathscr G_{\parab}(d)
	=
	\{G\in\mathscr G_{\parab}:d(G)=d\}
	=
	\{H_d\}.
	\]
\end{defn}

\subsection{Compatibility with basin boundaries' maps}
\label{SSec:BoundaryCompatibility}

The combination construction in Section~\ref{Sec:Combination} requires a
boundary conjugacy between the dynamics on the distinguished Fatou component of
the entire map and the corresponding group--side external map.

On the entire map side, the dynamics in the immediate basin of degree $d$ is
represented, after choosing a Riemann map, by a degree-$d$ Blaschke product.  On
the group side, the dynamics is represented by a factor Bowen--Series map: in
the attracting case by a map $F_G$ with $G\in\mathscr G_{\attr}(d)$, and in the
parabolic case by the Farey map $F_d$ with $H_d\in\mathscr G_{\parab}(d)$.

The required compatibility statements are different in the two settings.  In the
attracting case, the boundary conjugacy has a David extension to the disk (see \cite{David} for background on David homeomorphisms).  In
the parabolic case, it has a quasiconformal extension which is equivariant only
in a pinched neighbourhood of the circle.

\begin{thm}[Attracting compatibility;
	{\cite[Theorem~4.9]{LMMN}, \cite[Lemma~3.4]{MM}}]
	\label{Thm:AttractingCompatibility}
	Let $G\in\mathscr G_{\attr}(d)$, and let
	$F_G:\overline{\D}\setminus T_G\to\overline{\D}$ be the associated factor
	Bowen--Series map.  Let $B:\D\to\D$ be a Blaschke product whose
	restriction to $\Circle$ is an expanding degree$-d$ covering map.  Then there
	exists an orientation-preserving homeomorphism $\psi:\Circle\to\Circle$ such
	that $F_G\circ\psi=\psi\circ B$ on $\Circle$.  Moreover, $\psi$ extends continuously to $\D$ as a David homeomorphism.
\end{thm}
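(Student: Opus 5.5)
The plan is to construct the conjugacy $\psi$ first as a boundary map, via symbolic dynamics on both sides, and then to get the David extension from the Markov structure of $F_G|_{\Circle}$.

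\textbf{Existence of $\psi$.} Both $B|_{\Circle}$ and $F_G|_{\Circle}$ are orientation-preserving expansive coverings of degree $d$; for $F_G$ this is Theorem~\ref{Thm:FBSProperties}(1). By the classical rigidity of expansive circle coverings, each is topologically conjugate to $z\mapsto z^d$. Composing these two conjugacies gives $\psi$ with $F_G\circ\psi=\psi\circ B$. Concretely, I would fix a fixed point on each side, pull back the corresponding fixed-point partitions, and match the nested partition intervals. Expansivity makes the intervals shrink to points, so $\psi$ is a well-defined homeomorphism.

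\textbf{David extension.} I would use the Markov (respectively virtually Markov) structure of $F_G|_{\Circle}$ from Theorem~\ref{Thm:FBSProperties}(1). After passing to a suitable iterate, refine the partition so that both $B$ and $F_G$ have Markov partitions that $\psi$ respects. The two maps then differ only in their local geometry:
\begin{itemize}
\item $B$ is uniformly expanding and real-analytic, so its partition pieces have bounded geometry, i.e.\ they are comparable to geometric sequences.
\item $F_G$ is uniformly expanding away from the finitely many parabolic points in $\Ss_G$ and their preimages (Theorem~\ref{Thm:FBSProperties}(4)).
\end{itemize}

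\textbf{Local model near the parabolic points.} Near each such point, the two one-sided analytic germs are parabolic. This is where the condition that no periodic break point is asymmetrically hyperbolic is used. The $n$-th partition interval adjacent to a parabolic point has length of order $n^{-1/k}$ on the $F_G$ side, versus $\lambda^{-n}$ on the $B$ side. The local model is therefore the map $x\mapsto 1/\log(1/x)$-type correspondence between a hyperbolic and a parabolic fixed point.

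\textbf{Building the extension.} I would construct the extension tile by tile. Take Whitney-type squares over the Markov intervals in a collar of $\Circle$, and map each by a piecewise affine (Douady--Earle-type) map to the corresponding square on the $F_G$ side. Dilatation is bounded except in the cusp regions over the parabolic points. There the dilatation on the $n$-th generation layer grows at most linearly in $n$, while the area of that layer decays exponentially. This yields the David condition
\[
\mathrm{area}\{K>t\}\le Ce^{-\alpha t}.
\]
Away from the collar, the extension is a quasiconformal interpolation.

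\textbf{Virtually Markov case ($n\ge3$).} To handle this case, I would work with the lifted Bowen--Series map on the $n$-fold cover, which is genuinely Markov. I would construct an equivariant extension there and descend via $q_\omega$. Equivariance under $M_\omega$ is inherited if the lifted $\psi$ is chosen to commute with rotation.

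\textbf{Main obstacle.} The step I expect to be hardest is the uniform estimate on the dilatation in the cusp layers. It requires precise asymptotics of the parabolic germs at $\Ss_G$ and of all their preimages, with bounded distortion along the inverse branches near the cusps. I would handle this via Koebe distortion applied to branches of $F_G^{-1}$, which are univalent in slit neighborhoods, together with the Fatou coordinate for each parabolic germ.
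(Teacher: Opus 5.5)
Your outline matches the argument of \cite{LMMN}, which is all the paper relies on: it gives no proof and simply cites \cite[Theorem~4.9]{LMMN} and \cite[Lemma~3.4]{MM}. That outline is the topological conjugacy from expansivity, followed by a piecewise extension over a Whitney-type grid adapted to the Markov partitions, with dilatation growing linearly in the time spent near the cusps while the corresponding area decays exponentially. The step that fails is your treatment of the case $n=n(G)\ge3$. The Bowen--Series map $A^{\mathrm{BS}}_\Gamma$ on the cover is \emph{not} a circle covering. It commutes with $M_\omega$, and if it were continuous on $\Circle$, the relation $F_G\circ q_\omega=q_\omega\circ A^{\mathrm{BS}}_\Gamma$ would force its degree to be $d=np-1$. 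But a continuous circle map commuting with rotation by $2\pi/n$ has degree $\equiv 1 \pmod n$, while $np-1\equiv -1\pmod n$. So for $n\ge3$ it has jump discontinuities, which is exactly why the factor construction is needed. Hence there is no circle homeomorphism conjugating a lift of $B$ to it, and the extension machinery cannot be run upstairs. The same degree count shows that any continuous lift $\tilde B$ of $B|_{\Circle}$ under $z\mapsto z^n$ satisfies $\tilde B\circ M_\omega=M_\omega^{-1}\circ\tilde B$. So it cannot be conjugated to an $M_\omega$-commuting map by an $M_\omega$-commuting homeomorphism, and your claim that equivariance is inherited fails as stated.

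The repair is not to lift at all. Being virtually Markov means exactly that if you pull back the partition of $\Circle$ by the forward-invariant break set finitely many times, you get a genuine Markov partition for $F_G|_{\Circle}$ whose closed pieces lie in arcs of analyticity. Transporting it by $\psi^{-1}$ gives a Markov partition for $B$, and your construction then runs directly on the base. This is how \cite[Lemma~3.4]{MM} reduces to \cite[Theorem~4.9]{LMMN}, and it is the refinement you already perform in your second step, so the lift is superfluous.

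There are three smaller corrections. First, at the cusps you are not using the exclusion of asymmetrically hyperbolic break points. That hypothesis is vacuous here, since the periodic break points of $F_G|_{\Circle}$ lie in $\Ss_G$ and are parabolic. What you use is that both one-sided germs are parabolic M\"obius-type germs. Second, for such germs the \emph{distance} from the $j$-th backward iterate to the cusp is $\asymp j^{-1}$, while the $j$-th piece has length $\asymp j^{-2}$. Third, large dilatation occurs over cusps at \emph{all} iterated preimages of $\Ss_G$, which form a dense set. The area estimate must therefore be organized by the length of orbit segments spent near $\Ss_G$, summed over all levels, not by layers at the finitely many points of $\Ss_G$ alone.
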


We shall use the following terminology. A one-sided pinched neighbourhood of
$\Circle$ is a collar of $\Circle$ inside $\overline\D$, allowed to pinch at
finitely many prescribed boundary points. In particular, it is a subset of
$\overline\D$.

\begin{thm}[Parabolic compatibility, {\cite[Lemma~5.2]{BLLM}}]
	\label{Thm:ParabolicCompatibility}
	Let $B:\D\to\D$ be a Blaschke product of degree $d\ge2$ such that $1\in\Circle$ is a triple fixed point of $B$. 
	Let $F_d:\overline\D\setminus T_d\to\overline\D$ be the degree$-d$ Farey map,
	normalized so that its unique parabolic fixed point on $\Circle$ is $1$. Set $A_B:=(B|_{\Circle})^{-1}(1)$ and $A_{F_d}:=(F_d|_{\Circle})^{-1}(1)$. There exists a homeomorphism
	$\psi:\overline\D\to\overline\D$, quasiconformal on $\D$, with $\psi(1)=1$,
	such that $\psi|_{\Circle}$ conjugates $B|_{\Circle}$ to $F_d|_{\Circle}$.
	In particular, $\psi(A_B)=A_{F_d}$.
	
	Moreover, there exist closed one-sided pinched neighbourhoods
	$X_B^-\subset X_B^+\subset\overline\D$ and
	$X_{F_d}^-\subset X_{F_d}^+\subset\overline\D$ such that
\begin{itemize}
    \item $X_B^-, X_{F_d}^-$ are pinched exactly at	the points of $A_B, A_{F_d}$, respectively;
    \item $X_B^+$ and $X_{F_d}^+$ are pinched only at $1$;
    \item  $X_{F_d}^-\subset\overline\D\setminus
	T_d$;
    \item $\psi(X_B^\pm)=X_{F_d}^\pm$; and
    \item $B:X_B^-\to X_B^+$,
	$F_d:X_{F_d}^-\to X_{F_d}^+$ satisfy $F_d\circ\psi=\psi\circ B$ on $X_B^-$.
    \end{itemize}
\end{thm}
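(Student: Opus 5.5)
The plan is to construct $\psi$ first on the circle via Markov partitions, then on a pinched collar by a pull-back argument, and finally to extend it across the rest of the disk.

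\emph{The circle conjugacy.} Both $B|_{\Circle}$ and $F_d|_{\Circle}$ are orientation-preserving expansive coverings of degree $d$, so each is topologically conjugate to $z\mapsto z^d$. I would fix the conjugacy by requiring $\psi(1)=1$. It then maps the $n$-th preimage partitions of $\Circle$ determined by $1$ onto each other. In particular $\psi(A_B)=A_{F_d}$, since preimages of $1$ are matched in cyclic order. Expansiveness makes the partition pieces shrink to points, which gives continuity and uniqueness of $\psi|_{\Circle}$.

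\emph{The pinched collars.} Next I would build $X^+$ for both maps. Near $1$ I use the local parabolic structure: both maps have a triple fixed point at $1$, with two repelling directions along $\Circle$ and one attracting direction pointing into $\D$. For $F_d$ each one-sided branch at $1$ extends analytically by Proposition~\ref{Prop:FareyProperties}, so repelling Fatou coordinates exist on both sides. I would take $X^+$ to be a collar bounded by a curve that is pinched at $1$ and, near $1$, lies in the repelling petals, for instance a horocycle-like curve in Fatou coordinates. I then define $X^-$ as the component of the preimage of $X^+$ adjacent to $\Circle$. It is pinched at every preimage of $1$, i.e.\ at $A_B$ (respectively $A_{F_d}$). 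For $F_d$ I shrink $X^+$ if necessary so that $X^-_{F_d}\subset\ovl\disk\sm T_d$; this is possible because $\partial T_d$ meets $\Circle$ only at $1$.

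\emph{Equivariant extension.} I would choose a quasiconformal map $\psi_0\colon X_B^+\sm X_B^-\to X_{F_d}^+\sm X_{F_d}^-$ that agrees with the circle conjugacy on the boundary pieces. Its outer edge must be compatible with its inner edge via $\psi_0\circ B=F_d\circ\psi_0$ on $\partial X^-$. I then define $\psi$ on $X_B^-$ by iterated pull-back: on the $n$-th fundamental region, $\psi$ is the lift of $\psi_0$ under the univalent branches of $B^n$ and $F_d^n$. These branches are univalent because there are no critical points near $\Circle$. The dilatation is therefore preserved, and the maps glue by construction. Continuity up to $\Circle$ follows from the shrinking of the Markov pieces. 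On the complement $\ovl\disk\sm X_B^+$ I would take any quasiconformal extension of the boundary values, since this complement is a Jordan region with a cusp at $1$ on both sides.

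\emph{Main obstacle.} The hard part is uniform quasiconformality of the pull-backs near the parabolic point $1$ (and near its preimages). There the fundamental regions accumulate without any hyperbolic expansion, so bounded distortion does not come for free. My first approach is to work in repelling Fatou coordinates on each side. In those coordinates both maps become the translation $\zeta\mapsto\zeta+1$, up to an error term that vanishes at infinity. I would choose $\psi_0$ near $1$ to be a translation-commuting quasiconformal map in Fatou coordinates, i.e.\ the lift of a quasiconformal map between the corresponding Écalle cylinders. Its pull-backs are then the same map translated, so the dilatation stays bounded. It remains to check that the two one-sided constructions and the interior cusp region of $T_d$ fit together; I expect this to be fiddly but routine.
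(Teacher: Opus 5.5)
\paragraph{Comparison with the paper's proof.} Your route differs from the paper's. The paper does not build $\psi$ by hand. It passes through the unicritical model $B_d(z)=\frac{z^d+c}{1+cz^d}$, $c=\frac{d-1}{d+1}$, and uses two citations. It quotes \cite[Lemma~5.2]{BLLM} for a conjugacy between $F_d$ and $B_d$ on pinched neighbourhoods, and then \cite[Proposition~6.8]{McM88} to replace $B_d$ by the given $B$.

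You instead construct $\psi$ directly for the given $B$. Your steps are the normalized circle conjugacy, nested pinched collars, an equivariant map on a fundamental region, pull-back along univalent branches, and a filling-in of $\overline{\mathbb{D}}\setminus X_B^+$. This is the standard pull-back construction. It is self-contained and avoids the detour through $B_d$, whose second step the paper compresses into a single citation. The paper's route, in exchange, hands off to the literature exactly the local matching at the break point $1$ that you must now do yourself.

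\paragraph{What needs correcting.} That local matching is where your sketch is wrong. $F_d$ does not have a triple fixed point at $1$, and it has no attracting direction into $\mathbb{D}$. The point $1$ is a break point, and the normal direction is filled by the cusp of $T_d$. Each one-sided branch is, via the conformal map $q_\omega$ near the corresponding ideal vertex, a parabolic M\"obius map with a \emph{double} fixed point. Hence the repelling Fatou coordinates are of order $(z-1)^{-1}$ for $F_d$ but $(z-1)^{-2}$ for $B$. So $\psi$ behaves like $z-1\mapsto C_\pm (z-1)^2$ on the two sides of $1$, not like an asymptotically conformal map.

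The step you call the main obstacle is in fact automatic. Lifting $\psi_0$ by conformal branches preserves $|\mu|$ pointwise, so no distortion control is needed for the dilatation. What does require proof is the following.
\begin{itemize}
\item[(i)] The nesting $X^-\subset X^+$. It depends on the choice of $\partial X^+$, and you do not address it.
\item[(ii)] Continuity of the glued map up to $\mathbb{S}^1$ near $1$ and its iterated preimages, where the pieces shrink only parabolically.
\item[(iii)] Quasisymmetry of $\psi|_{\partial X_B^+}$ at the pinch point $1$. Without it, ``any quasiconformal extension'' to $\overline{\mathbb{D}}\setminus X_B^+$ need not exist.
\end{itemize}
For (iii), take strip-shaped collars in the Fatou coordinates. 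Then $\overline{\mathbb{D}}\setminus X_B^+$ has angle $\pi$ at $1$. It is not a cusp, which would fail to be a quasidisk. The boundary map is of power type $t\mapsto C_\pm\, t\lvert t\rvert$, which is quasisymmetric. So your plan goes through once these checks are supplied.
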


\begin{proof}
	Let 
    \[
    B_d(z):=\frac{z^d+\frac{d-1}{d+1}}{1+\frac{d-1}{d+1}z^d}
    \]
    be the standard unicritical parabolic Blaschke product of degree $d$. It has a triple parabolic fixed point at $1$,
	and $(B_d|_{\Circle})^{-1}(1)$ is the set of $d-$th roots of unity.
	
	We first use the standard compatibility between $F_d$ and $B_d$. Since $F_d$ is
	Farey-like, \cite[Lemma~5.2]{BLLM} gives a homeomorphism
	$\mathfrak g:\overline\D\to\overline\D$, quasiconformal on $\D$, with
	$\mathfrak g(1)=1$, which conjugates $F_d$ to $B_d$ on suitable one-sided
	pinched neighbourhoods of $\Circle$. Equivalently, for
	$\psi:=\mathfrak g^{-1}$, there are closed one-sided pinched
	neighbourhoods $X_{B_d}^-\subset X_{B_d}^+$ and $X_{F_d}^-\subset X_{F_d}^+$, where $X_{B_d}^-$ is
	pinched at $A_B$, $X_{F_d}^-$ is pinched at $A_{F_d}$, and
	$X_{B_d}^+$ and $X_{F_d}^+$ are pinched only at $1$, such that
	$\psi(X_{B_d}^\pm)=X_{F_d}^\pm$ and $F_d\circ\psi=\psi\circ B_d$ on $X_{B_d}^-$.

    It remains to replace $B_d$ by the given Blaschke product $B$. The existence of such a quasiconformal conjugacy between $B_d$ and $B$ was proved in \cite[Proposition~6.8]{McM88}. 
    \end{proof}

\subsection{Local dynamics near parabolic points}
Consider a factor Bowen--Series map $F:\cD=:\overline{\D}\,\sm T\to\overline{\D}$. 

\subsubsection{The case of a fixed point}\label{fbs_para_fixed_local_dyn_subsec} 
Let $x$ be an ideal boundary point of $T$ with $F(x)=x$. We refer the reader to Figure~\ref{fbs_local_fixed_fig} for an illustration of the following analysis. 

Choose points $b_0, c_0\in\partial T$ sufficiently close to $x$ such that $F(b_0)=c_0$. Note that this implies $F(c_0)=b_0$, and $F:\arc{x,b_0}\to\arc{x,c_0}$ is an orientation-reversing involution, where $\arc{x,b_0}\subset\partial T$ (respectively, $\arc{x,c_0}\subset\partial T$) is a simple arc connecting $x, b_0$ (respectively, $x,c_0$). Connect $b_0, c_0$ by a simple arc $\arc{b_0,c_0}\subset \overline{T}$. Denote by $Y$ the topological triangle in $\overline{\D}$ whose sides are given by $\arc{b_0,c_0}$, $\arc{x,b_0}$, and $\arc{x,c_0}$.
By construction, $Y\subset\overline{T}$. 
\begin{figure}[h!]
\captionsetup{width=0.98\linewidth}
\begin{tikzpicture}
\node[anchor=south west,inner sep=0] at (0,0) {\includegraphics[width=0.33\textwidth,  trim=15 20 15 20, clip]{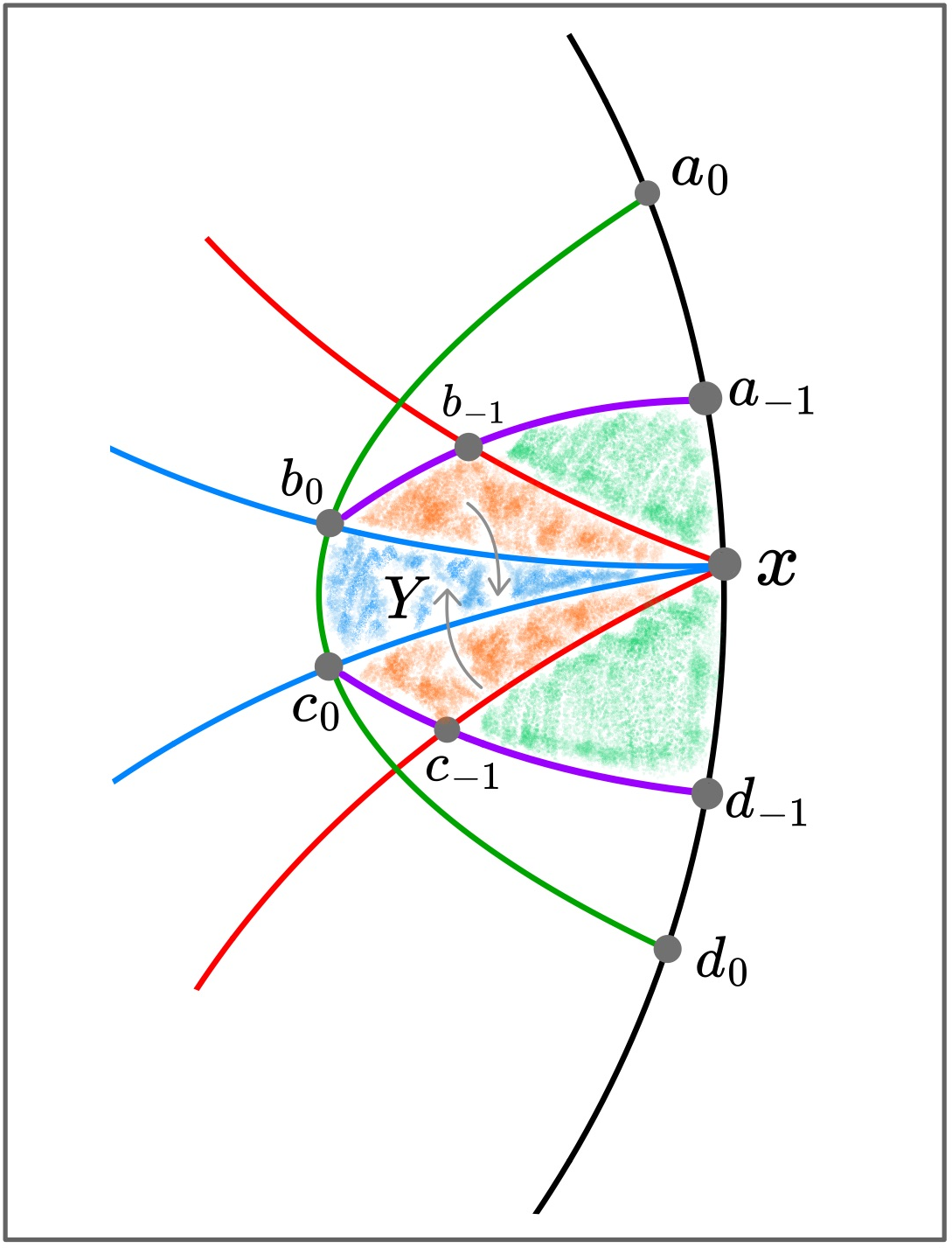}};
\end{tikzpicture}
\caption{The local dynamics of a factor Bowen--Series map near a parabolic fixed point is depicted.}
\label{fbs_local_fixed_fig}
\end{figure}

The parabolic dynamics of $F$ at $x$ implies that there are inverse orbits
$$
\cdots\xrightarrow{F} b_{-n}\xrightarrow{F} b_{-(n-1)}\xrightarrow{F}\cdots \xrightarrow{F} b_{-1}\xrightarrow{F} b_0
$$
and
$$
\cdots\xrightarrow{F} c_{-n}\xrightarrow{F} c_{-(n-1)}\xrightarrow{F}\cdots \xrightarrow{F} c_{-1}\xrightarrow{F} c_0
$$
with $\displaystyle\lim_{n\to+\infty} b_{-n}=\displaystyle\lim_{n\to+\infty} c_{-n}=x$.

Now pick points $a_0, d_0\in\mathbb{S}^1$ close to $x$ and on different sides of $x$, and connect $a_0, b_0$ (respectively, $c_0, d_0$) by a simple arc $\arc{a_0,b_0}\subset\overline{\D}\,\sm T$ (respectively, $\arc{c_0,d_0}\subset\overline{\D}\,\sm T$) such that the union of $\arc{a_0,b_0}$, $\arc{b_0,c_0}$, and $\arc{c_0,d_0}$ is a simple arc $\gamma$ connecting $a_0, d_0$ in $\overline{\D}$. Since $F$ is locally injective on $\mathbb{S}^1$, there exist unique points $a_{-1}, d_{-1}$ near $x$ such that $F(a_{-1})=a_0$ and $F(d_{-1})=d_0$. Further, there are simple arcs $\arc{b_0,a_{-1}}$ and $\arc{c_0,d_{-1}}$ that are mapped homeomorphically onto $\arc{a_0,b_0}\cup\arc{b_0,c_0}$ and $\arc{d_0,c_0}\cup\arc{c_0,b_0}$ (respectively) under $F$. Since $F$ has repelling directions on $\mathbb{S}^1$ at the parabolic fixed point $x$, we can assume that the arcs $\arc{b_0,a_{-1}}$ and $\arc{c_0,d_{-1}}$ are contained in the connected component $U$ of $\D\setminus\gamma$ having $x$ on its boundary.
The following properties are consequences of the above discussion. 
\begin{enumerate}[leftmargin=8mm]
    \item Each of the arcs $\arc{b_0,b_{-1}}$($\subset\arc{b_0,a_{-1}}$) and $\arc{c_0,c_{-1}}$ ($\subset\arc{c_0,d_{-1}}$) maps homeomorphically onto $\arc{b_0,c_0}$ under $F$.
    \item There are simple arcs $\arc{x,b_{-1}}$, $\arc{x,c_{-1}}$ in $U$ that map homeomorphically onto $\arc{x,b_0}$, $\arc{x,c_0}$ (respectively) under $F$.
    \item Each of the topological triangles $\Delta (x,b_0,b_{-1})$ and $\Delta (x,c_0,c_{-1})$ maps univalently onto the triangle $Y=\Delta (x,b_0,c_0)$ under $F$.
    \item The union of the topological triangles $\Delta (x,b_{-1},a_{-1})$ and $\Delta (x,c_{-1},d_{-1})$ maps univalently onto $\overline{U\setminus Y}$ under $F$.
\end{enumerate}

\subsubsection{The case of a two-cycle}\label{fbs_para_twocycle_local_dyn_subsec}
Now let $\{x,F(x)\}$ be a parabolic $2-$cycle of $F$, where $x, F(x)$ are ideal vertices of $T$. We record the covering properties of $F$ near this $2-$cycle for future reference. See Figure~\ref{fbs_local_per_two_fig} for an illustration.

We choose points $b_0, c_0\in\partial T$ close to $x$ and $b_0', c_0'\in\partial T$ close to $F(x)$ such that $F(b_0)=c_0'$ and $F(c_0)=b_0'$. Hence, $F(c_0')=b_0$ and $F(b_0')=c_0$, and $F:\arc{b_0,x}\cup\arc{x,c_0}\to\arc{c_0',F(x)}\cup\arc{F(x),b_0'}$ is an orientation-reversing involution, where $\arc{b_0,x}, \arc{x,c_0}, \arc{c_0',F(x)}, \arc{F(x),b_0'}$ are simple arcs contained in $\partial T$.
Connect $b_0, c_0$ (respectively, $b_0', c_0'$) by a simple arc $\arc{b_0,c_0}\subset \overline{T}$ (respectively, $\arc{b_0',c_0'}\subset\overline{T}$), defining the triangles $Y:=\Delta (x, b_0, c_0)\subset\overline{T}$ and $Y':=\Delta (F(x),b_0',c_0')\subset\overline{T}$.
\begin{figure}[h!]
\captionsetup{width=0.98\linewidth}
\begin{tikzpicture}
\node[anchor=south west,inner sep=0] at (0,0) {\includegraphics[width=0.5\textwidth, trim=15 15 15 15, clip]{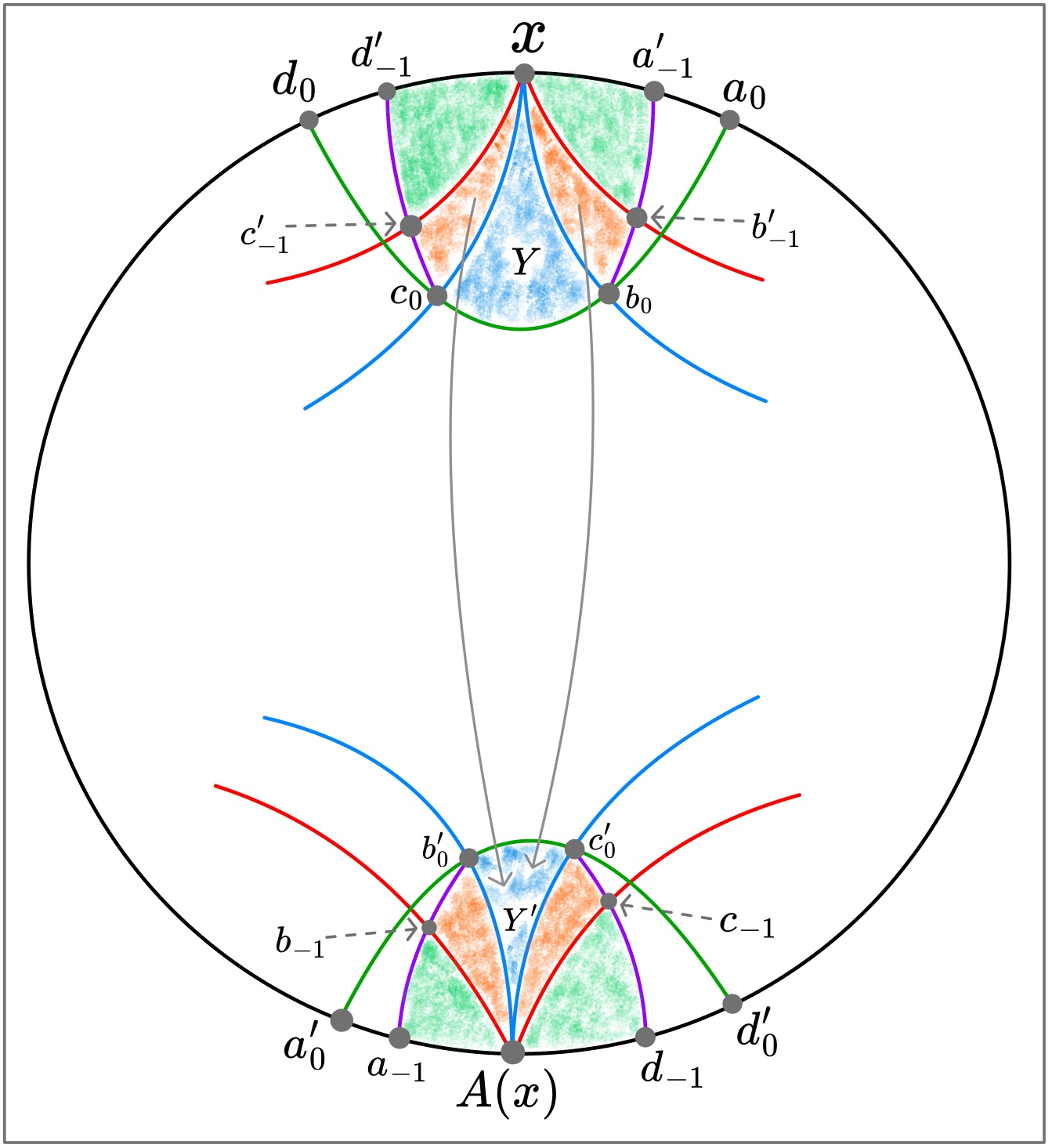}};
\end{tikzpicture}
\caption{The local dynamics of a factor Bowen--Series map near a parabolic $2-$cycle is depicted.}
\label{fbs_local_per_two_fig}
\end{figure}

As in the previous case, pick points $a_0, d_0\in\mathbb{S}^1$ close to $x$ and on different sides of $x$ (respectively, $a_0', d_0'\in\mathbb{S}^1$ close to $F(x)$ and on different sides of $F(x)$). Then connect $a_0, b_0$ (respectively, $c_0, d_0$) by a simple arc $\arc{a_0,b_0}\subset\overline{\D}\,\sm T$ (respectively, $\arc{c_0,d_0}\subset\overline{\D}\,\sm T$) such that the union of $\arc{a_0,b_0}$, $\arc{b_0,c_0}$, and $\arc{c_0,d_0}$ is a simple arc $\gamma$ connecting $a_0, d_0$ in $\overline{\D}$. Let us also perform the corresponding construction for the objects with $'$, and call the resulting simple arc $\gamma'$.
Local injectivity of $F$ on $\mathbb{S}^1$ provides us with unique points $a_{-1}, d_{-1}$ near $F(x)$ and $a_{-1}', d_{-1}'$ near $x$ such that $F(a_{-1})=a_0$, $F(d_{-1})=d_0$, $F(a_{-1}')=a_0'$, and $F(d_{-1}')=d_0'$. 

As before, there are simple arcs $\arc{b_0,a_{-1}'}$ and $\arc{c_0,d_{-1}'}$ that are mapped homeomorphically onto $\arc{a_0',b_0'}\cup\arc{b_0',c_0'}$ and $\arc{d_0',c_0'}\cup\arc{c_0',b_0'}$, respectively, under $F$. 
Similarly, there are simple arcs $\arc{b_0',a_{-1}}$ and $\arc{c_0',d_{-1}}$ that map homeomorphically onto $\arc{a_0,b_0}\cup\arc{b_0,c_0}$ and $\arc{d_0,c_0}\cup\arc{c_0,b_0}$, respectively, under $F$. 
We can assume that the arcs $\arc{b_0,a_{-1}'}$ and $\arc{c_0,d_{-1}'}$ (respectively, the arcs $\arc{b_0',a_{-1}}$ and $\arc{c_0',d_{-1}}$) are contained in the connected component $U$ of $\D\setminus\gamma$ having $x$ on its boundary (respectively, the connected component $U'$ of $\D\setminus\gamma'$ having $F(x)$ on its boundary). We also denote the $F-$preimages of $b_0', c_0'$ on the arcs $\arc{b_0,a_{-1}'}$ and $\arc{c_0,d_{-1}'}$ by $b_{-1}',c_{-1}'$. Similarly, we denote the $F-$preimages of $b_0, c_0$ on the arcs $\arc{b_0',a_{-1}}$ and $\arc{c_0',d_{-1}}$ by $b_{-1},c_{-1}$.

With notation as above, we have the following mapping properties of $F$. 
\begin{enumerate}[leftmargin=8mm]
    \item Each of the arcs $\arc{b_0,b_{-1}'}$ and $\arc{c_0,c_{-1}'}$ maps homeomorphically onto $\arc{b_0',c_0'}$ under $F$. Analogously, each of the arcs $\arc{b_0',b_{-1}}$ and $\arc{c_0',c_{-1}}$ maps homeomorphically onto $\arc{b_0,c_0}$ under $F$.
    \item There are simple arcs $\arc{x,b_{-1}'}$, $\arc{x,c_{-1}'}$ in $U$ that map homeomorphically onto $\arc{F(x),b_0'}$, $\arc{F(x),c_0'}$, respectively, under $F$. Similarly, there are simple arcs $\arc{F(x),b_{-1}}$, $\arc{F(x),c_{-1}}$ in $U'$ that map homeomorphically onto $\arc{x,b_0}$, $\arc{x,c_0}$, respectively, under $F$
    \item Each of the topological triangles $\Delta (x,b_0,b_{-1}')$ and $\Delta (x,c_0,c_{-1}')$ maps univalently onto the triangle $Y'=\Delta (x,b_0',c_0')$ under $F$.
    Similarly, each of the topological triangles $\Delta (F(x),b_0',b_{-1})$ and $\Delta (F(x),c_0',c_{-1})$ maps univalently onto the triangle $Y=\Delta (x,b_0,c_0)$ under $F$.
    \item The union of the topological triangles $\Delta (x,b_{-1}',a_{-1}')$ and $\Delta (x,c_{-1}',d_{-1}')$ maps univalently onto $\overline{U'\setminus Y'}$ under $F$. Similarly, the union of the topological triangles $\Delta (F(x),b_{-1},a_{-1})$ and $\Delta (F(x),c_{-1},d_{-1})$ maps univalently onto $\overline{U\setminus Y}$ under $F$.
\end{enumerate}

\section{Combination theorems for maps and groups}
\label{Sec:Combination}

We now combine the two inputs introduced in Sections~\ref{Sec:mateable_maps} and~\ref{Sec:MateableGroups}. On the holomorphic side we take $f\in\Kstar$, $\star\in\{\parab,\attr\}$, with $U$ an invariant immediate basin of the distinguished parabolic, respectively attracting, fixed point. Here $\mathscr K_\parab$ is the class of Definition~\ref{Def:Kp}, while $\mathscr K_\attr$ is the class of Definition~\ref{Def:Kattr}. By Theorems~\ref{thm:bounded_geomfinite} and~\ref{thm:bounded_hyperbolic}, the basin $U$ is a bounded Jordan domain in the parabolic case and a bounded quasidisk in the attracting case; in particular, $U$ is simply connected and every Riemann map $U\to\disk$ extends homeomorphically to the boundary. Recall that we write $\Kstar(d)$ for the marked maps for which $\deg(f \colon U\to U)=d\ge2$.

On the group side, we use the factor Bowen--Series maps defined in Section~\ref{Sec:MateableGroups}. For $\star=\attr$, the class $\mathscr G_\attr(d)$ is defined in Definition~\ref{Def:Gattr}, and Theorem~\ref{Thm:FBSProperties} gives the associated map $F_G \colon \ovl\disk\sm T_G\to\ovl\disk$, whose boundary restriction is a degree-$d$ mateable circle map. For $\star=\parab$, Definition~\ref{Def:Gparab} gives $\mathscr G_\parab(d)=\{H_d\}$, and the corresponding map is the degree-$d$ Farey map $F_d:\ovl\disk\sm T_d\to\ovl\disk$ of Proposition~\ref{Prop:FareyProperties}. When no confusion is possible, we write $T:=T_G$ and $\Ss:=\partial T_G\cap\Circle$. Thus throughout the section the degrees are matched: $\deg(f \colon U\to U)=\deg(F_G|_\Circle)=d$.

The aim is to replace the dynamics of $f$ inside the basin $U$ by the group--side dynamics of $F_G$, while keeping the dynamics of $f$ conformally unchanged away from the grand orbit of $U$. The boundary matching needed for this replacement is provided by the compatibility results of Subsection~\ref{SSec:BoundaryCompatibility}: Theorem~\ref{Thm:ParabolicCompatibility} in the parabolic case and Theorem~\ref{Thm:AttractingCompatibility} in the attracting case. The resulting surgery is quasiconformal for $\star=\parab$ and David for $\star=\attr$.

The main theorem of this section is the following combination result. Before stating it, recall that two partially defined holomorphic maps $h_1 \colon Y_1\sm X_1 \to Y_1$, $h_2 \colon Y_2\sm X_2 \to Y_2$, with $X_i \subset Y_i \subseteq \C$ being open domains, are called \emph{conformally conjugate} if there exists a homeomorphism $\phi \colon Y_1\to Y_2$ such that $\phi(X_1) = X_2$, $\phi \colon Y_1\sm \ovl X_1 \to Y_2\sm \ovl X_2$ is conformal, and 
\[
h_2\circ\phi(z) = \phi \circ h_1(z) \quad\text{ for all }\quad z\in Y_1\sm X_1. 
\]

\begin{thm}[Conformal combination of maps and groups]
\label{Thm:mating}
Fix $\star\in\{\parab,\attr\}$ and $d\ge2$. For every pair $f \in \Kstar(d)$ and $G \in \Gstar(d)$ there exist a pair of Jordan disks $\tT\subset \tilde U \subset \C$ and  a holomorphic map $g \colon \C \sm \tT\to \C$ that provides a conformal combination (``mating'') between $f$ and $G$ in the following sense:
\begin{enumerate}[leftmargin=8mm]
    \item 
    \label{It:Map}
    \emph{On the map side:} $g\big(\clo\big(\tilde U\big)\,\sm\tilde T\big) = \clo \big(\tilde U\big)$, and the maps $g$ and $f$ are conformally conjugate away from the grand orbits of $\tilde U$ and $U$; i.e., $g \colon \C \,\sm \bigcup_{k=0}^\infty \ovl{g^{-k}\big(\tilde U\big)} \to \C$ is conformally conjugate to $f \colon \C \,\sm \bigcup_{k=0}^\infty \ovl{f^{-k}(U)} \to \C$;
    \item 
    \label{It:Group}
    \emph{On the group side:} the map $g \colon {\clo\big(\tilde U\big) \sm \tT} \to \clo\big(\tilde U\big)$ is conformally conjugate to the factor Bowen--Series map $F_G \colon \ovl \disk \sm T \to \ovl \disk$ associated to the group $G$.
\end{enumerate}
    
\end{thm}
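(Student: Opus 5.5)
The plan is the standard surgery: replace $f|_U$ by $F_G$ via the compatibility homeomorphism, pull the resulting structure back along the grand orbit of $U$, and straighten it. The steps are as follows.

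\emph{Step 1 (model map).} Let $\phi\colon U\to\disk$ be a Riemann map, normalized in the parabolic case so that the parabolic fixed point on $\partial U$ goes to $1$. Since $U$ is a Jordan domain, $\phi$ extends homeomorphically to $\ovl U$. Then $B:=\phi\circ f\circ\phi^{-1}$ is a degree-$d$ Blaschke product. In the attracting case it is expanding on $\Circle$, because $\ovl U$ is compact and contains an attracting point. In the parabolic case $1$ is a triple fixed point, since $U$ is a single immediate basin. Let $\psi$ be the homeomorphism given by Theorem~\ref{Thm:AttractingCompatibility} (David) or Theorem~\ref{Thm:ParabolicCompatibility} (quasiconformal); it conjugates $B|_\Circle$ to $F_G|_\Circle$. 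Define a topological map $\tilde g$ on $\C\sm\phi^{-1}\psi^{-1}(T)$ by
\begin{itemize}
\item $\tilde g=f$ off $U$;
\item $\tilde g=\phi^{-1}\psi^{-1}\circ F_G\circ\psi\phi$ on $\ovl U$ minus the tile.
\end{itemize}
These agree on $\partial U$ by the boundary conjugacy, so $\tilde g$ is continuous. In the parabolic case, use the pinched neighbourhoods $X^\pm$ to interpolate: on $X_B^-$ set $\tilde g=f$ in the $\phi$-coordinate, which equals $\psi^{-1}F_d\psi$ there. Outside this neighbourhood we keep the $F_G$-model, so $\tilde g$ is still holomorphic off a compact part of $U$ away from $\partial U$.

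\emph{Step 2 (invariant Beltrami coefficient).} On $U$, let $\mu$ be the pullback of the standard structure by $\psi\circ\phi$. Spread $\mu$ to all iterated preimage components $V$ of $U$ by pulling back under $f^k\colon V\to U$, and set $\mu=0$ elsewhere. Then $\mu$ is $\tilde g$-invariant: on $U$ by construction, and on the preimages because $f$ is holomorphic. It also vanishes off the grand orbit of $U$.
\begin{itemize}
\item Parabolic case: $\mu$ has $\|\mu\|_\infty<1$, since $\psi$ is quasiconformal and pullback by holomorphic maps preserves the norm. The measurable Riemann mapping theorem then gives a quasiconformal $\Phi$ with $\Phi^*\sigma_0=\mu$.
\item Attracting case: $\mu$ is a David coefficient on $U$ only, and pulling back by $f^k$ preserves the David distortion function on each $V$. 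To obtain a global David integrability estimate, argue as follows. By Proposition~\ref{Prop:4David}, all components are uniform quasidisks, and by Lemma~\ref{Lem:UnivComp} the maps $f^k\colon V\to U$ have degree at most $L$. Hence each $\mu|_V$ is the pullback of a David coefficient by a map $h_V$ that is uniformly quasiconformal on a neighbourhood, and the measure $\{|\mu|>1-\eps\}\cap V$ is bounded by $C\,(\mathrm{area}\,V)^{\alpha}e^{-\beta/\eps}$ with uniform constants. Summing over $V$ uses finiteness of area on compacts together with item~\eqref{item:bounded_diam} of Proposition~\ref{Prop:4David}.
\end{itemize}
This is the step I expect to be the main obstacle. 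The issue is the infinitely many preimage components accumulating at $\infty$. I would restrict to spherical metric estimates, or work on compacts, and invoke a David integrability theorem local on $\C$, since David's theorem only requires the exponential condition on compact sets; this is what uniformity buys. A David integrating map $\Phi\colon\C\to\C$ then exists, normalized to fix $\infty$.

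\emph{Step 3 (straightening and holomorphy).} Set $g:=\Phi\circ\tilde g\circ\Phi^{-1}$, $\tilde U:=\Phi(U)$ and $\tT:=\Phi\phi^{-1}\psi^{-1}(T)$. These are Jordan disks, with $\tT\subset\tilde U$.
\begin{itemize}
\item $g$ preserves the standard structure, so it is holomorphic off $\partial$-type sets: the boundaries of the grand orbit components and the Julia set.
\item Removability closes the gap. In the parabolic case $\Phi$ is quasiconformal, and $g$ is continuous and holomorphic off a set where $\Phi\circ\tilde g$ is a composition of quasiconformal maps with zero dilatation a.e.; by Weyl's lemma $g$ is holomorphic. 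In the David case, use that $J(f)$, being the image-relevant set, has measure handled by the W$^{1,1}_{\mathrm{loc}}$ regularity of David maps; equivalently, the David Weyl lemma shows that a David map with zero dilatation a.e.\ relative to $\mu$ is conformal.
\end{itemize}

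\emph{Step 4 (conjugacies).} Item~\eqref{It:Group} of Theorem~\ref{Thm:mating} holds with conjugacy $\psi\circ\phi\circ\Phi^{-1}$ on $\clo(\tilde U)$. This map is conformal on the interior because $\mu$ was defined exactly as the pullback of $\sigma_0$ by $\psi\phi$. Item~\eqref{It:Map} holds with conjugacy $\Phi^{-1}$ off the closure of the grand orbit, since $\mu=0$ there. The identity $g(\clo(\tilde U)\sm\tT)=\clo(\tilde U)$ follows from $F_G(\ovl\disk\sm T)=\ovl\disk$.
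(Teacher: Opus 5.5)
Your parabolic case is the paper's argument: gluing along the pinched collar, the measurable Riemann mapping theorem, then Weyl's lemma. There is one slip. The region where $\tilde g\neq f$ is not compactly contained in $U$, because $X_B^-$ is pinched at $A_B$. That region reaches $\partial U$ at the $d$ points $\phi^{-1}(A_B)$, and these must be removed as isolated singularities of a locally bounded quasiregular map.

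The genuine gap is Step~2 for $\star=\attr$, i.e.\ the global David estimate \eqref{Eq:DavidCond}, which is the heart of the attracting case.

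\emph{Branching.} The map $f^k\colon V\to U$ is in general branched. For $\frac{\pi}{2}\sin z$, infinitely many components of $f^{-1}(U)$ contain critical points. So $\mu|_V$ is not a pullback of $\mu|_U$ by a uniformly quasiconformal map. The lift $h_V$ of Proposition~\ref{Prop:4David} only relates $f^k$ to $\phi_U\circ P_d$ on a collar of $\partial V$. What is needed is a measure estimate for pullbacks under proper maps of bounded degree. The paper uses \cite[Lemma~6.4]{Zha16} together with uniform bounded geometry \cite[Lemma~A.7]{CDKS} at the finitely many levels $s\le N$ where branching occurs. It then uses Koebe distortion for the univalent pullbacks at levels $s>N$.

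\emph{Summability.} The componentwise bound must be \emph{linear} in spherical area:
\[
\sigma(\{|\mu|\ge1-\eps\}\cap V)\le C\,\sigma(V)\,e^{-\alpha/\eps}.
\]
Disjointness of the components then yields the total bound $C\,\sigma(\Cc)\,e^{-\alpha/\eps}$. A bound $C(\mathrm{area}\,V)^{\alpha}e^{-\beta/\eps}$ with $\alpha<1$ need not be summable over the infinitely many small components. These components accumulate on $J(f)$, also inside compact sets. Item~\eqref{item:bounded_diam} of Proposition~\ref{Prop:4David} does not help here.

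\emph{Local versus global integrability.} Your fallback, that David's theorem only needs the estimate on compact sets, does not give the map you claim. Local estimates only produce a homeomorphic solution from $\C$ onto some simply connected domain, which may be a disk. For example, $z\mapsto z/(1+|z|)$ is locally quasiconformal and maps $\C$ onto $\D$. In that case $g$ would not be defined on $\C\sm\tT$. To get $\Psi\colon\Cc\to\Cc$ with $\Psi(\infty)=\infty$, you need \eqref{Eq:DavidCond} for the spherical measure on all of $\Cc$. That is exactly what Lemma~\ref{Lem:SphEuc} supplies, and your sketch does not.

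\emph{Step~3.} In the attracting case you put the seam in the wrong place. Near points of $J(f)\sm\partial U$ one has $\tilde g=f$ and $f^*\mu=\mu$. So analyticity there follows from uniqueness of David solutions \cite[Theorem~20.4.19]{David}, with no condition on $J(f)$, which may have positive area. The only place requiring removability is $\partial U$, where two different definitions of $\tilde g$ are glued. There you need that the quasicircle $\partial U$ is $W^{1,1}$-removable, so that $\Psi(\partial U)$ is conformally removable.
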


The technical details of the proof of Theorem~\ref{Thm:mating} will depend on whether $\star=\attr$ or $\star=\parab$. We will spend the next two subsections dealing with these cases separately.

\subsection{Proof of Theorem~\ref{Thm:mating}: the parabolic case ($\star=\parab$)}
\label{SSec:ParabProof}

We first treat the parabolic case. Thus $G=H_d$ and the associated factor
	Bowen--Series map is the degree-$d$ Farey map
	$F_d:\ovl\disk\sm T_d\to\ovl\disk$. We use the normalization of
	Proposition~\ref{Prop:FareyProperties}, so that the unique parabolic fixed
	point of $F_d|_\Circle$ is $1$.

    Let $f\in\mathscr K_\parab(d)$, and let $U$ be the invariant immediate
	basin of its distinguished parabolic fixed point. By
	Theorem~\ref{thm:bounded_geomfinite}, $U$ is a bounded Jordan domain. Let
	$p\in\partial U$ be the corresponding parabolic boundary fixed point, and
	choose a Riemann map $\phi \colon U\to\disk$ which extends homeomorphically to
	$\ovl U$ and satisfies $\phi(p)=1$. The map
	$B:=\phi\circ f\circ\phi^{-1} \colon \disk\to\disk$ is then a finite Blaschke product
	of degree $d$ ; equivalently, $B\circ\phi=\phi\circ f$ on $U$. Since the Blaschke product $B$ extends analytically by the Schwarz reflection across the boundary $\partial \disk$, it follows that $1$ is a triple
	fixed point for $B$.

We now apply Theorem~\ref{Thm:ParabolicCompatibility} to this Blaschke product $B$ and to the Farey map $F_d$. Set $A_B:=(B|_\Circle)^{-1}(1)$ and $A_F:=(F_d|_\Circle)^{-1}(1)$. The theorem gives a homeomorphism $\psi \colon \ovl\disk\to\ovl\disk$, quasiconformal on $\disk$, with $\psi(1)=1$ and $\psi(A_B)=A_F$, such that $\psi|_\Circle$ conjugates $B|_\Circle$ to $F_d|_\Circle$. Moreover, it gives closed one-sided	pinched neighbourhoods $X_B^-\subset X_B^+$ and $X_F^-\subset X_F^+$, with $X_F^-\subset\ovl\disk\sm T_d$ and $\psi(X_B^\pm)=X_F^\pm$ so that $F_d\circ\psi=\psi\circ B$ holds on $X_B^-$.

Define $\chi:=\psi\circ\phi \colon \ovl U\to\ovl\disk$. Then $\chi$ conjugates
	$f$ to $F_d$ on the boundary, and if
	$Y_B^\pm:=\phi^{-1}(X_B^\pm)$, then $Y_B^\pm$ are one-sided pinched
	neighbourhoods of $\partial U$ inside $\ovl U$, with $Y_B^-$ pinched precisely
	at the $d$ points $\phi^{-1}(A_B)$. On $Y_B^-\cap U$ we have
	$F_d\circ\chi=\chi\circ f$.

Let us define several objects: a domain
$\hat T := \chi^{-1}(T_d) \subset U$
and a quasi-regular map
\begin{equation*}
	\widecheck f \colon U \sm \hat T \to U, \quad\quad
	\widecheck f := \chi^{-1} \circ F_d \circ \chi.
\end{equation*}

Since $\chi(Y_B^-)=X_F^-\subset\ovl\disk\setminus T_d$, the set
	$Y_B^-\cap U$ lies in the domain of $\widecheck f$. Moreover, using
	$F_d\circ\psi=\psi\circ B$ on $X_B^-$ and $B\circ\phi=\phi\circ f$ on $U$, we get
	\begin{equation}
		\label{Eq:Parab:CollarEquality}
		\widecheck f=\chi^{-1}\circ F_d\circ\chi=f
		\quad\text{on }Y_B^-\cap U.
	\end{equation}
	Thus the replacement dynamics agrees with the original map on a one-sided
	pinched neighbourhood of $\partial U$ inside $U$.

Using these two objects, we define a semi-global continuous map $\hat f \colon \C \sm \hat T \to \C$ by setting
\begin{equation}
	\label{Eq:Parab:Tilde}
	\hat f(z) := \left\{
	\begin{aligned}
		&\widecheck f(z), &\quad \quad z \in U \sm \hat T\\
		&f(z),&\quad \quad z \in \C \sm U.
	\end{aligned} 
	\right.
\end{equation}

We now check that $\hat f$ is quasiregular. Away from $\partial U$ this is
	immediate: on $\C\sm\ovl U$ the map is holomorphic, since it coincides with
	$f$, while on $U\sm\hat T$ it is quasiregular, since
	$\hat f=\chi^{-1}\circ F_d\circ\chi$, the map $F_d$ is holomorphic on
	$\disk\sm T_d$, and $\chi$ is quasiconformal on $U$. It remains to consider
	points of $\partial U$. Put $P:=\phi^{-1}(A_B)$. If
	$\xi\in\partial U\sm P$, then the pinched collar $Y_B^-$ has positive width near
	$\xi$; hence, after shrinking a neighbourhood $N_\xi$ of $\xi$, we have
	$N_\xi\cap U\subset Y_B^-\cap U\subset U\sm\hat T$. By
	\eqref{Eq:Parab:CollarEquality}, the inside branch of $\hat f$ agrees with $f$
	on $N_\xi\cap U$, while the outside branch is $f$ by definition. Thus $\hat f$
	is holomorphic near every point of $\partial U\sm P$. The only remaining
	possible exceptional points are the finite set $P$; they are removable for
	quasiregular maps, since $\hat f$ is quasiregular in punctured neighbourhoods of
	these points and locally bounded. Therefore $\hat f$ is quasiregular. This is
	the local gluing argument underlying McMullen's Gluing Lemma; compare
	\cite[Proposition~7.29]{branner_fagella_2014}.

It also follows by construction that $\hat f$ admits an invariant Beltrami coefficient $\mu$ defined on $\C$ as follows (here, $\mu_0$ is the standard coefficient that gives the standard complex structure):
\begin{equation}
	\label{Eq:Mu:Parab}
	\mu(z) = \left\{
	\begin{aligned}
		&\mu_0(z), &\quad\quad
		&z \in \C \, \sm \bigcup_{k=0}^\infty f^{-k}(U),\\
		&{\chi}^* (\mu_0)(z), &\quad\quad
		&z\in U,\\
		&{(\chi} \circ f^k)^*(\mu_0)(z), &\quad\quad
		&z \in f^{-k}(U)\setminus f^{-(k-1)}(U), \,\, k \in \mathbb N_{\ge 1}.
	\end{aligned}
	\right.
\end{equation}
In this way, $\mu$ is a Beltrami coefficient with bounded dilatation. 

Finally, let $\Phi \colon \C \to \C$ be the solution of the Beltrami equation with the coefficient $\mu$. The map 
\[
g := \Phi \circ \hat f \circ \Phi^{-1} \colon \C \sm \widetilde T \to \C, \quad\quad \widetilde T := \Phi\big(\hat T\big), \quad \tilde U := \Phi(U).
\]
is the required map. Indeed, since $\Phi$ integrates the standard Beltrami form away from the grand orbit of $U$, and on that set $\hat f$ acts as $f$, we obtain that $g$ and $f$ are conformally conjugate away from the grand orbit of $U$ and $\tilde U$. This establishes~\eqref{It:Map}. On the other hand, on $U$, the map $\Phi$ integrates the Beltrami form $\chi^* (\mu_0)$. As $\mu_0\vert_{\D}$ is $F_d-$invariant, it follows from the definition of $\hat f$ and $g$ that $g$ restricted to $\Phi\big(\clo\big(U\big) \sm \hat T\big) = \clo\big(\tilde U\big) \sm \tilde T$ is conjugate to $F_d |_{\ovl \disk \sm T}$ via the conformal map $\chi\circ\Phi^{-1} \colon \clo\tilde U\to\overline{\D}$. This
establishes \eqref{It:Group}.
\qed
\subsection{Proof of Theorem~\ref{Thm:mating}: the attracting case ($\star=\attr$)}
Let $G\in\mathscr G_{\attr}(d)$ and let	$F=F_G \colon \ovl\disk\sm T\to\ovl\disk$ be the associated factor Bowen--Series map. Let $f\in\mathscr K_{\attr}(d)$, and let $U$ be the invariant immediate basin of the distinguished attracting fixed point, $p$, with $\deg(f \colon U\to U)=d$. Choose a Riemann map $\phi \colon U\to\disk$ so that $\phi(p)=0$. By Proposition~\ref{Prop:4David}, $U$ is a quasidisk, so $\phi$ extends homeomorphically to $\ovl U$. The map $B:=\phi\circ f\circ\phi^{-1} \colon \disk\to\disk$ is a finite Blaschke product of degree $d$ satisfying $B(0)=0$ and  $B|_\Circle$ is an expanding covering of degree $d$.

The boundary map $F|_\Circle$ is a degree-$d$ expansive covering,
	but it is not expanding: by
	Theorem~\ref{Thm:FBSProperties}, the singular boundary points of the
	factor tile are parabolic points of period one or two. Hence the
	boundary conjugacy between $B|_\Circle$ and $F|_\Circle$ cannot be quasisymmetric, which makes it impossible to use in this attracting case the classical quasiconformal surgery as in Subsection~\ref{SSec:ParabProof}. We will use the David surgery techniques instead, following the strategy of \cite{LMMN} (see, e.g., \cite[Theorem~7.1]{LMMN}). By Theorem~\ref{Thm:AttractingCompatibility} there exists an
	orientation--preserving homeomorphism $\psi \colon \Circle\to\Circle$ satisfying
	$F\circ\psi=\psi\circ B$ on $\Circle$, that admits a David
	extension $\psi:\disk\to\disk$.

As in the parabolic case, define a domain $\hat T := \phi^{-1} \circ \psi^{-1}(T)\subset U$ and a continuous map $\hat f \colon \C \sm \hat T \to \C$ given by
\begin{equation}
\label{f_tilde_def_eqn}
    \hat f(z) := \left\{
\begin{aligned}
    &\phi^{-1} \circ \psi^{-1} \circ F \circ \psi \circ \phi (z), &\quad &z \in U \sm \hat T,\\
    &f(z), &\quad &z \in \C \sm U.
\end{aligned}
    \right.
\end{equation}
We then define a Beltrami coefficient $\mu(z)$ by 
\begin{equation}
\label{Eq:Mu:Parab1}
    \mu(z) = \left\{
    \begin{aligned}
        &\mu_0(z), &\quad\quad &z \in \Cc \, \sm \bigcup_{k=0}^\infty f^{-k}(U),\\
        &(\psi \circ \phi)^* (\mu_0)(z), &\quad\quad &z\in U,\\
        &(\psi \circ \phi \circ f^k)^*(\mu_0)(z), &\quad\quad &z \in f^{-k}(U){\setminus f^{-(k-1)}(U)}, \,\, k \in \mathbb N_{\ge 1}.
    \end{aligned}
    \right.
\end{equation}
In this way, $\mu$ is invariant under $\hat f$. 

\noindent
\textsc{Claim I:} \textit{The Beltrami coefficient $\mu(z)$ is, in fact, a David coefficient in $\Cc$. This means that there exist constants $C \ge 1$, $\alpha > 0$, $\epsilon_0 > 0$ so that for all $\epsilon < \epsilon_0$, the spherical measure of points with large dilatation is exponentially small; specifically,
\begin{equation}
    \label{Eq:DavidCond}
    \sigma\big(\{z \in \Cc : |\mu(z)| \ge 1-\epsilon\}\big) \le C e^{-\frac{\alpha}{\epsilon}},
\end{equation}
where $\sigma$ is the spherical measure.}
\medskip 

\begin{proof}[Proof of \textsc{Claim I}]
In order to see this, we will use Proposition~\ref{Prop:4David}, Lemma~\ref{Lem:UnivComp}, and the properties of the class $\mathscr K_{\attr}(d)$ given in Definition~\ref{Def:Kattr}. By \eqref{Eq:Mu:Parab1}, $\mu$ is equal to the standard complex structure everywhere except on the grand orbit of $U$. Hence, we need to check the David condition \eqref{Eq:DavidCond} only on $U$ and its iterated preimages. 

By Proposition~\ref{Prop:4David}, the immediate basin $U$ is a
	quasidisk, hence a John domain. Since $\psi \colon \D\to\D$ is a David
	homeomorphism and $\phi \colon U\to\D$ is conformal, 
	\cite[Proposition~2.5(iv)]{LMMN} implies that
	$\psi\circ\phi \colon U\to\D$ is a David map. Therefore
	$\mu|_U=(\psi\circ\phi)^*(\mu_0)$ is a David coefficient on $U$. Thus
	there exist constants $C_0\ge1$, $\alpha_0>0$, and $\eps_0>0$ such that,
	for $0<\eps\le\eps_0$,
\[
{
	A=A(\eps):=\{z\in U:|\mu(z)|\ge1-\eps\}
	\quad\text{satisfies}\quad
	\sigma(A)\le C_0 e^{-\alpha_0/\eps}.}
\]

We will need the following lemma:
\begin{lemma}
\label{Lem:SphEuc}
        {For $s\ge1$, let $\,\mathcal U_s$ denote the collection of connected components of $f^{-s}(U)\setminus f^{-(s-1)}(U),$ and set}
    	\[
    	{
    		A_s:=f^{-s}(A)\cap\big(f^{-s}(U)\setminus f^{-(s-1)}(U)\big).}
    	\]
    Let $N \ge 1$ be the smallest integer such that for all $n \ge N$ each component $W\in \mathcal U_{n+1}$ is mapped univalently by $f^{n+1-N}$ onto a component of $\,\mathcal U_N$. Then there exists $C_1 \ge C_0$ and $\alpha_0 \ge \alpha_1>0$  such that
    \[
    \sum \limits_{s=1}^N \sigma\big({A_s}\big) \le C_1 \cdot e^{-\frac{\alpha_1}{\eps}}.
    \]
    for every $\eps_0 \ge \eps > 0$.

    Furthermore, there exists a constant $C_2 \ge C_1$ such that
    \[
    \sum \limits_{s \ge N+1} \sigma\big( {A_s}\big) \le C_2 \cdot e^{-\frac{\alpha_1}{\eps}}.
    \]    
\end{lemma}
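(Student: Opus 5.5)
The plan is to control the spherical measure of $A_s$ by transporting the estimate for $A\subset U$ through the branches of $f^{-s}$. Two cases are treated differently: components at small depth $s\le N$, where critical points may occur, and the deep components $s\ge N+1$, where pullbacks are univalent and Koebe distortion applies. First we check that $N$ is finite. Since $f$ is hyperbolic, $S(f)\Subset F(f)$ meets only finitely many Fatou components, by \eqref{eq:compact_hyperbolic}. Hence only finitely many preimage components of $U$ contain critical points, and their depths are bounded. Beyond that depth, every pullback to a component of $\mathcal U_{n+1}$ is univalent down to depth $N$, because the chain avoids all components meeting $S(f)$.

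\emph{Finite part, $s\le N$.} Each $W\in\mathcal U_s$ is mapped by $f^s$ onto $U$ as a proper map of degree at most $L$, by Lemma~\ref{Lem:UnivComp}. The set $A_s\cap W=(f^s|_W)^{-1}(A)$, and $|\mu|$ there equals $|\mu\circ f^s|$ on $A$ (a pullback by a holomorphic map preserves $|\mu|$). The difficulty is that $\mathcal U_s$ has infinitely many components. We split them. By Proposition~\ref{Prop:4David}\eqref{item:bounded_diam}, only finitely many components have spherical diameter larger than a fixed $\delta$.

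For each of the finitely many large components, we write $f^s|_W$ as a composition of a univalent map and finitely many degree-$\le N+1$ branched steps. On compact pieces, the area of a preimage of a set $E$ under a proper map of bounded degree is at most $C\,\sigma(E)^{\beta}$ for some $\beta\in(0,1]$ (locally $z\mapsto z^k$ distorts area by at most a power $1/k$). Since $C\,(C_0e^{-\alpha_0/\eps})^{\beta}=C'e^{-\beta\alpha_0/\eps}$, this gives the exponent $\alpha_1=\beta\alpha_0/(N+1)$ or similar.

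For the infinitely many small components, we use that they lie in the complement of a neighbourhood of $S(f)$. Away from the singular set, the inverse branches of $f^s$ from the bounded set $U$ are univalent on a fixed neighbourhood of $U$; this uses hyperbolicity and $U\cap P(f)\Subset U$ compactly only in the relevant sense. Koebe distortion then gives
\[
\sigma(A_s\cap W)\le K\,\sigma(A)\,\frac{\sigma(W)}{\sigma(U)}.
\]
Since $\sum_W \sigma(W)\le\sigma(\Cc)$ because the components are disjoint, the sum is bounded by $K' C_0e^{-\alpha_0/\eps}$. Summing over $s\le N$ gives the first inequality.

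\emph{Tail, $s\ge N+1$.} Each $W\in\mathcal U_{s}$ maps univalently by $f^{s-N}$ onto some $W'\in\mathcal U_N$. The inverse branch extends univalently to a definite neighbourhood of $W'$, obtained from the annuli $\widetilde A_V$ of Proposition~\ref{Prop:4David} that avoid $P(f)$. Koebe distortion then gives
\[
\frac{\sigma(A_s\cap W)}{\sigma(W)}\le K\,\frac{\sigma(A_N\cap W')}{\sigma(W')}.
\]
The components $W'$ are uniform quasidisks of comparable shape, which keeps $K$ uniform. Summing over all $W$ and $s$ and using the disjointness of all preimage components gives
\[
\sum_{s\ge N+1}\sigma(A_s)\le K\,\sup_{W'}\frac{\sigma(A_N\cap W')}{\sigma(W')}\,\sigma(\Cc).
\]
It remains to bound the ratio $\sigma(A_N\cap W')/\sigma(W')$ by $Ce^{-\alpha_1/\eps}$ uniformly in $W'$. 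This follows from the finite-part argument applied to each $W'$ relatively: the relative density of $A$ in $U$ is exponentially small, and bounded-degree pullback together with Koebe preserves relative density up to a power.

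The main obstacle is obtaining uniform distortion constants with spherical rather than Euclidean measure near $\infty$. The spherical metric is comparable to the Euclidean one divided by $(1+|z|^2)$, and the branches of $f^{-s}$ send $U$ towards $\infty$. We handle this by applying Koebe in Euclidean terms on each component, where the ratio form above is scale-invariant; the spherical weight is essentially constant on a bounded-diameter quasidisk far out. The only other non-routine point is the power loss at critical points, which is why we must allow $\alpha_1<\alpha_0$.
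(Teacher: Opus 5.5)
\textbf{The gap: infinitely many branched components at depth $s\le N$.} You claim that the infinitely many small components of $\mathcal U_s$, $s\le N$, have inverse branches of $f^s$ that are univalent on a fixed neighbourhood of $U$, because they lie away from $S(f)$. But univalence of $f^s|_W$ depends on whether $W$ and its forward images up to $U$ contain critical points, not on where $W$ sits relative to $S(f)$. Since $U$ itself contains singular values, branched pullbacks of $U$ cannot be avoided.

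The paper's own example already breaks the claim. Take $f(z)=\frac{\pi}{2}\sin z$ with $U\ni-\pi/2$. By periodicity, the translates $U+2k\pi$, $k\neq0$, are components of $\mathcal U_1$. Each contains the critical point $-\pi/2+2k\pi$ and is mapped $2:1$ onto $U$, while their spherical diameters tend to $0$; here $N=1$.

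The same slip appears in your first paragraph: it is false that only finitely many preimage components of $U$ contain critical points. What is true, and is all that finiteness of $N$ needs, is that critical points lie only in components mapped onto the finitely many Fatou components meeting $S(f)$, so their depths are bounded.

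As a result, your power-loss bound $C\,\sigma(E)^\beta$ is obtained component by component, hence only for the finitely many large components. It must be made uniform over infinitely many branched components, and nothing in the proposal does this. The tail inherits the gap: the uniform bound on $\sigma(A_N\cap W')/\sigma(W')$ over $W'\in\mathcal U_N$ that you invoke is exactly this missing statement.

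\textbf{What is needed.} You need a relative-density estimate whose constants depend only on the degree bound. The paper chooses a disk $V$ with $U\Subset V$ and $V\setminus U$ disjoint from the postcritical set. Then for each $W$, the map $f^s\colon V_W\to V$ (with $V_W$ the component of $f^{-s}(V)$ containing $W$) is proper of degree at most $L$, by Lemma~\ref{Lem:UnivComp}. Next, \cite[Lemma~A.7]{CDKS} gives uniformly bounded geometry for all such $W$, hence $\diam_{Euc}^2(W)\lesssim\sigma_{Euc}(W)$. Then \cite[Lemma~6.4]{Zha16} gives $\sigma_{Euc}\big(f^{-s}(A)\cap W\big)\le C'\diam_{Euc}^2(W)\,e^{-\alpha_1/\eps}$, with $C',\alpha_1$ depending only on $C_0,\alpha_0$ and the degree bound. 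Together with the spherical--Euclidean comparison, this yields $\sigma(A_s\cap W)\le C''\,\sigma(W)\,e^{-\alpha_1/\eps}$ uniformly in $W\in\mathcal U_s$, $s\le N$. Summing over disjoint components gives the first inequality, and the same bound supplies the uniform ratio at level $N$. Once this is in place, the rest of your tail argument matches the paper's: univalent pullbacks of $V_{W'}$ with collar modulus at least $\Mod(V\setminus\ovl U)/L$, Koebe distortion, and disjointness.
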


\begin{proof}[Proof of Lemma~\ref{Lem:SphEuc}]
	{We first prove the estimate for $s=1$. The same argument, applied to $f^s$ and to the components of exact rank $s$, gives the corresponding estimate for each fixed $1<s\le N$; since there are only finitely many such levels, the constants can then be adjusted.} Let us label the connected components of $f^{-1}(U)$ as $U_i$, $i \in I_1$, and let $\mathcal{B}_i := f^{-1}(A) \cap U_i$ be the preimage of $A$ in $U_i$.

To control the interplay between the spherical and the Euclidean geometry, let us cover $\Cc$ with two open topological disks $\hat X_1$ and $\hat X_2$ that together provide an atlas of $\Cc$. We assume that $U \subset \hat X_1$ and $U \subset \Cc \sm \hat X_2$. 

Furthermore, we can assume that each $U_i$ lies either in $\hat X_1$, or in $\hat X_2$. Pick points $x_1 \in U \subset \hat X_1$ and $x_2 \in \hat X_2 \sm \hat X_1$, and uniformize each $\Cc \sm \{x_1\}$ and $\Cc \sm \{x_2\}$ by the complex plane $\C$. The images of $\hat X_1$ and $\hat X_2$ will be some topological disks $X_1$ and $X_2$ (and we will keep the notation $U_i$ for the images of the domains $U_i$ viewed as lying in $X_1$ or $X_2$). 

In this way, each Borel set $B \subset \Cc$ acquires the Euclidean area $\sigma_{Euc}(B)$ measured in the respective $\C$, and furthermore, there exists a constant $M$ that depends only on $\hat X_1$ and $\hat X_2$, such that for each such $B$,  
\begin{equation}
\label{Eq:Comp}
M^{-1} \cdot \sigma(B) \le \sigma_{Euc}(B) \le M \cdot \sigma(B).
\end{equation}

Choose a topological disk $V \subset \hat X_1$ such that $U \Subset V$ and $V\sm U$ is disjoint from the postcritical set of $f$. This is possible because $f$, by assumption, is a hyperbolic map. For every $U_i$, let $V_i$ be the component of $f^{-1}(V)$ containing $U_i$. In this way, $f \colon V_i\to V$ is a proper holomorphic map between two topological disks and 
\[
\deg(f \colon V_i  \to V) = \deg(f \colon U_i  \to U)
\]

By Lemma~\ref{Lem:UnivComp}, the degrees of all such maps $f \colon V_i \to V$ are uniformly bounded over all preimage components $U_i$. By \cite[Lemma A.7]{CDKS} (see also \cite[Fact 6.2]{KSS}), each $U_i$ has $\eta-$bounded geometry, where $\eta$ depends only on the bound on the degrees, $\Mod(V\sm \ovl U)$, and the bound on the geometry of $U$, and hence is uniform over all $i \in I_1$. Recall that $\eta-$bounded geometry means that $U_i$ contains a Euclidean round disk of radius $\eta \cdot \diam_{Euc}(U_i)$.  In particular, there exists a constant $M' \ge 1$ (independent of $i$) such that the Euclidean diameter $\diam_{Euc}(U_i)$ satisfies 
\begin{equation}
\label{Eq:Comp2}
\diam^2_{Euc}(U_i) \le M'\cdot \sigma_{Euc}(U_i).
\end{equation}

By \cite[Lemma 6.4]{Zha16}, there exist some constants $C', \alpha_1$ (that depend on $C_0$, $\alpha_0$, and the uniform bound on the degrees of $f|U_i$) such that
\begin{equation}
\label{eq:1}
\sigma_{Euc}(\mathcal{B}_i) \le C' \diam_{Euc}^2(U_i) \cdot e^{-\frac{\alpha_1}{\eps}}.
\end{equation} 
Therefore, by \eqref{Eq:Comp}, \eqref{Eq:Comp2}, and \eqref{eq:1},
\begin{equation*}
    \begin{aligned}
\sigma(\mathcal{B}_i) &\le M \cdot \sigma_{Euc}(\mathcal{B}_i) \le M \cdot C' \diam_{Euc}^2(U_i) \cdot e^{-\frac{\alpha_1}{\eps}} \le M \cdot C' \cdot M' \cdot \sigma_{Euc}(U_i) \cdot e^{-\frac{\alpha_1}{\eps}} \\ &\le M^2 \cdot C' \cdot M' \cdot \sigma(U_i) \cdot e^{-\frac{\alpha_1}{\eps}} = C'' \sigma(U_i) \cdot e^{-\frac{\alpha_1}{\eps}},
    \end{aligned}
\end{equation*} 
where $C''$ is the combined uniform constant. We conclude that 
\begin{equation}
\label{Eq:Conclusion}
{\sigma(A_1)} = \sum_{i \in I_1} \sigma(\mathcal{B}_i) \le C'' e^{-\frac{\alpha_1}{\eps}} \sum_{i\in I_1} \sigma(U_i) \le C'' \cdot \sigma(\Cc) \cdot e^{-\frac{\alpha_1}{\eps}}.
\end{equation}
{This proves the desired estimate for $s=1$. Repeating the same argument for each $s=2,\ldots,N$, and then increasing the constant and decreasing the exponent if necessary, gives $\sum_{s=1}^{N}\sigma(A_s)\le C_1 e^{-\alpha_1/\eps}$. We shall also use the corresponding componentwise estimate at level $N$: if $W\in\mathcal U_N$ and $A_W:=f^{-N}(A)\cap W$, then $\sigma(A_W)\le C_1\,\sigma(W)\,e^{-\alpha_1/\eps}$ for every $W\in\mathcal U_N.$}

Let us now prove the second claim. {For this part of the proof, we relabel the components of 
	$f^{-N}(U)\setminus f^{-(N-1)}(U)$ as $U_i$, $i\in I_N$, and re-define $\mathcal{B}_i$
	as $f^{-N}(A)\cap U_i$. By the componentwise estimate obtained above at level $N$,}
\begin{equation}\label{eq:Ai}
{\sigma(\mathcal{B}_i)\le C_1\,\sigma(U_i)\,e^{-\alpha_1/\eps}
	\quad\text{for all }i\in I_N.}
\end{equation}

Let $U_i'$ be a connected component of $f^{-k}(U_i)$ with $k\ge1$. 
{By the choice of $N$, the map} $f^k \colon U_i' \to U_i$ is univalent. 
Furthermore, {let $V_i$ be the component of $f^{-N}(V)$ containing $U_i$.} If $V_i'$ is the component of $f^{-k}(V_i)$ containing $U_i'$, then $f^k \colon V_i' \to V_i$ is univalent and 
\[
\Mod\big(V_i'\sm\ovl{U'_i}\big) = \Mod\big(V_i\sm\ovl{U_i}\big) \ge \frac{1}{D}\Mod\big(V \sm \ovl{U}\big),
\]
where $D$ is the uniform bound on the degrees of the landing maps $f^N \colon U_i \to U$, $i\in I_N$. Therefore, all moduli $\Mod(V_i' \sm \ovl{U_i'})$ are bounded below uniformly over $i$ and $k$. Thus, each map $f^k \colon U_i' \to U_i$ admits a Koebe extension $f^k \colon V_i' \to V_i$ with Koebe space uniformly bounded below.  By the Koebe distortion theorem, together with the spherical--Euclidean comparison~\eqref{Eq:Comp} {and the uniform bounded geometry of the Fatou components,} there exists a constant $L$, uniform over $i$ and $k$, such that for every Borel set $E\subset U_i$,
\[
{
	\sigma\left((f^k|_{U_i'})^{-1}(E)\right)
	\le L\cdot \sigma(U_i')\cdot \frac{\sigma(E)}{\sigma(U_i)}.}
\]
{Taking $E=\mathcal{B}_i$, and using \eqref{eq:Ai}, we get}
\begin{equation*}
		\begin{aligned}
			\sigma\big(\{z \in U_i' : |\mu(z)|\ge 1-\eps\}\big)
			&= \sigma\left(f^{-k}(\mathcal{B}_i) \cap U_i'\right)	{
			\le L\cdot \sigma(U_i')\cdot \frac{\sigma(\mathcal{B}_i)}{\sigma(U_i)}}\\
			&\le {L\cdot C_1\cdot e^{-\alpha_1/\eps}\cdot \sigma(U_i').}
	\end{aligned}
\end{equation*}
Hence, {summing over all components whose first landing time to $U$ is at least $N+1$,}
\begin{equation*}
	{
		\begin{aligned}
			\sum_{s \ge N+1} \sigma(A_s)
			&= \sum_{i\in I_N} \, \sum_{k \ge 1} \, 
			\sum_{\substack{U_i'\\ \text{ c.c. of }f^{-k}(U_i)}}
			\sigma\left(f^{-k}(\mathcal{B}_i) \cap U_i'\right)\\
			&\le L\cdot C_1\cdot e^{-\alpha_1/\eps}
			\sum_{i\in I_N} \, \sum_{k \ge 1} \,
			\sum_{\substack{U_i'\\ \text{ c.c. of }f^{-k}(U_i)}} \sigma(U_i')\\
			&\le L\cdot C_1\cdot \sigma(\Cc)\cdot e^{-\alpha_1/\eps}.
	\end{aligned}}
\end{equation*}
(where the notation c.c. stands for ‘connected components of’) and the second claim of the lemma follows with {$C_2:=\max\{C_1,LC_1\sigma(\Cc)\}$.}
\end{proof}
We are now ready to combine all the ingredients. Since, by construction, the dilatation of $\mu$ can be large on {$A$ and its pullbacks $A_s$,} using Lemma~\ref{Lem:SphEuc}, we estimate:
\begin{equation}
    \begin{aligned}
\sigma\big(\{z \in \Cc : |\mu(z)|\ge 1-\eps\}\big) &= \sigma(A) + \sum_{s = 1}^{N} \sigma\left({A_s}\right) + \sum_{s \ge N+1} \sigma\left({A_s}\right) \\ 
        &\le C_0 \cdot e^{-\frac{\alpha_0}{\eps}} + C_1 \cdot e^{-\frac{\alpha_1}{\eps}} + C_2 \cdot e^{-\frac{\alpha_1}{\eps}} \le (C_0+C_1+C_2) \cdot e^{-\frac{\alpha_1}{\eps}}    
    \end{aligned}
\end{equation}
This concludes the proof of \textsc{Claim I} with $C := C_0+C_1+C_2$ and $\alpha := \alpha_1$.
\end{proof}

Let us conclude the proof of the theorem using the David coefficient $\mu(z)$. By the David Integrability Theorem \cite[Theorem 20.6.2]{David}, there exists a unique (up to M\"obius transformation in $\Cc$) David homeomorphism $\Psi \colon \Cc \to \Cc$, $\Psi(\infty) = \infty$, that solves the Beltrami equation
\[
\frac{\partial \Psi}{\partial \ovl z} = \mu \frac{\partial \Psi}{\partial z}.
\]

\noindent
\textsc{Claim II:} {The map 
\[
g:= \Psi \circ \hat f \circ \Psi^{-1} \colon \C \sm \tilde T \to \C, \quad \tilde T:=\Psi(\hat T), \quad \tilde U := \Psi(U)
\]
is analytic, and it is the required map.}

\begin{proof}[Proof of \textsc{Claim II}]
Let $W$ be an open set in $\C\setminus\widehat{T}$ such that $\hat f \vert_W$ is a homeomorphism. Since $\Psi\circ\hat f=g\circ \Psi$, it suffices to show that $\Psi\circ\hat f$ is a David homeomorphism on $W$. Indeed, this would imply that both $\Psi$ and $\Psi\circ\hat f$ are David homeomorphisms on $W$ integrating $\mu$ (note that $\mu$ is $\hat f-$invariant), and hence by \cite[Theorem~20.4.19, p.~565]{David}, the map $g$ is analytic on {$\Psi(W)$}.
Further, as $U$ is a quasi-disk, its boundary is removable for $W^{1,1}-$functions and hence $\Psi(\partial U)$ is conformally removable (cf. \cite[\S 2]{LMMN}). Thus, we may assume that $W$ is disjoint from $\partial U$.

Suppose first that $W\subset \C\setminus\overline{U}$. By construction, $\hat f=f$ on $\C\setminus\overline{U}$, and {on the chosen set $W$ this branch is conformal}. Hence, by \cite[Proposition~2.5, part (ii)]{LMMN}, $\Psi\circ\hat f$ is a David homeomorphism on $W$.

Now let $W\subset U$. In this case, we have the following factorization of $\Psi\circ\hat f$ on $W$ (see~\eqref{f_tilde_def_eqn}):
$$
\Psi\circ\hat f = \left(\Psi\circ\phi^{-1}\circ\psi^{-1}\right)\circ\left(F\circ \psi\circ\phi\right).
$$

As $\phi$ is conformal, $\psi$ is a David homeomorphism, and {$F$ is conformal on  $\psi\circ\phi(W)$}, it again follows from \cite[Proposition~2.5, parts (i) and (ii)]{LMMN} that $F\circ \psi\circ\phi$ is a David homeomorphism on $W$. We also know that $\psi\circ\phi$ is David, and hence both the David homeomorphisms $\Psi$ and $\psi\circ\phi$ straighten {$\mu|_U$}. Thus, \cite[Theorem~20.4.19, p.~565]{David} implies that $\Psi\circ\phi^{-1}\circ\psi^{-1}$ is conformal. It now follows from \cite[Proposition~2.5, part (i)]{LMMN} and the above factorization that $\Psi\circ\hat f$ is a David homeomorphism on $W$. 

{Since the critical points of the holomorphic branches of $\hat f$ form a discrete set, the preceding argument shows that $g$ is analytic off a discrete subset of $\C\setminus \tilde T$. As $g$ is continuous, the removable singularity theorem gives analyticity across these points. Together with the removability of $\Psi(\partial U)$, this proves that $g$ is analytic on all of $\C\setminus \tilde T$.}

Note that the arguments of the previous paragraph show that $\Psi\circ\phi^{-1}\circ\psi^{-1}$ is a conformal conjugacy between $F\vert_{\D\setminus T}$ and $g\vert_{\widetilde{U}\setminus\widetilde{T}}$ (where $\tilde U=\Psi(U)$). Moreover, as $\mu=\mu_0$ outside the grand orbit of $U$, we have that both $\Psi$ and the identity map integrate the trivial Beltrami form outside the grand orbit of $U$. Hence, $\Psi$ is conformal on this set {and, in particular, on $\C \,\sm \bigcup_{k=0}^\infty \ovl{f^{-k}(U)}.$}
Finally, as $\hat f=f$ outside $U$, {and since the removed set contains $U$ and all its preimages, the complements of the grand orbits of $U$ for $f$ and for $\hat f$ agree. Hence, using $g=\Psi\circ\hat f\circ\Psi^{-1}$ and $\tilde U=\Psi(U)$,} we conclude that $\Psi$ conformally conjugates $f \colon \C \,\sm \bigcup_{k=0}^\infty \ovl{f^{-k}(U)} \to \C$ to $g \colon \C \,\sm \bigcup_{k=0}^\infty \ovl{g^{-k}\big(\tilde U\big)} \to \C$. This completes the proof of \textsc{Claim II}, and thus of Theorem~\ref{Thm:mating} in the attracting case.
\end{proof}
\section{Analytic description of conformal combinations}\label{analytic_description_sec}

Let $g:\C\setminus\widetilde{T}\to\C$ be a conformal combination between a map $f\in\Kstar(d)$ and a group $G\in\Gstar(d)$ (more precisely, the factor Bowen--Series map $F_G$ associated with $G$) constructed in Theorem~\ref{Thm:mating}.
The goal of this section is to provide an analytic non-dynamical description of $g$ in terms of the M{\"o}bius involution $\eta(z)=1/z$ and a meromorphic map with a simple pole at the origin. The main result of the section is the following theorem.

\begin{thm}[Analytic description of conformal combinations]
\label{Thm:Analytic}
    Given $d \ge 2$, a map $f \in \Kstar(d)$, a group $G \in \Gstar(d)$, and their conformal combination $g \colon \C \,\sm \tilde T \to \C$ as in Theorem~\ref{Thm:mating}, there exist a meromorphic map $\mathfrak{h} \colon\C\to \Cc$ and a pair of open topological disks $\mathcal V^0, \mathcal V^\infty \subset \Cc$ with the following properties:
    \begin{enumerate}[leftmargin=8mm]
  	     \item $\partial \mathcal V^0 = \partial \mathcal V^\infty$ is a piecewise analytic Jordan curve, $0 \in \mathcal V^0$, $\infty\in\mathcal V^\infty$, and we have the decomposition $\widehat{\C} = \mathcal V^0 \sqcup \partial \mathcal V^0 \sqcup \mathcal V^\infty$;
        \item the boundary $\partial \mathcal V^0$ is invariant under the involution $\eta(z) = 1/z$, and hence $\eta (\mathcal V^0) = \mathcal V^\infty$;
        \item $\mathfrak{h}$ has a unique simple pole at $0$;
        \item $\mathfrak{h}|_{\overline{\cV^0}}$ sends $(\overline{\cV^0},0)$ homeomorphically onto $(\widehat{\C} \sm \tilde T,\infty)$ and is conformal on $\cV^0$;
        \item $\mathfrak{h} \colon {\overline{\cV^\infty}\,\sm \{\infty\}} \to \C$ is equal to $g \colon \C \, \sm \tilde T \to \C$ up to pre-composition with a map conformal on $\cV^\infty$, i.e., there exists a homeomorphism $\psi_{\infty} \colon (\overline{\cV^\infty},\infty) \to (\widehat{\C} \,\sm \tilde T,\infty)$ which is conformal on $\cV^\infty$ such that
    \[
    \mathfrak{h}|_{\overline{\cV^\infty}\,\sm \{\infty\}} = g\circ \psi_\infty|_{\overline{\cV^\infty}\,\sm \{\infty\}};\ and
    \]
    \item  $\psi_\infty|_{\overline{\cV^\infty}} = \mathfrak{h} \circ \eta\vert_{\,\overline{\cV^\infty}}$; in particular, 
    \[
    g\vert_{\C \,\sm \tilde T}\equiv \mathfrak{h}\circ\eta\circ(\mathfrak{h}\vert_{\overline{\cV^0}\,\sm \{0\}})^{-1}.
    \]    
    \end{enumerate}
\end{thm}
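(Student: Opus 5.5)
The plan is to build $\mathfrak h$ by a welding/uniformization argument, as in \cite{LLM24,BLLM}. Let $\Omega:=\Cc\sm\tilde T$. By Theorem~\ref{Thm:mating}\eqref{It:Group} and Theorem~\ref{Thm:FBSProperties}(4), $g|_{\partial\tilde T}$ is conformally conjugate to $F_G|_{\partial T_G}$. Hence it is an orientation-reversing involution of the Jordan curve $\partial\tilde T$. It is piecewise real-analytic, since it is piecewise a Möbius side-pairing pushed forward by a conformal map. Its only non-analytic points are the finitely many corners of $\partial\tilde T$ and the ideal vertices $\Ss$. Take two copies $\Omega_1,\Omega_2$ of $\ovl\Omega$ and form
\[
S:=\Omega_1\sqcup\Omega_2\big/\sim,\qquad x\in\partial\Omega_1\ \sim\ y\in\partial\Omega_2 \iff x=g(y).
\]
Because $g$ is an involution, this relation is symmetric. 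Topologically, $S$ is a sphere.

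The first step is to equip $S$ with a complex structure. In the interiors of $\Omega_1,\Omega_2$ we use the given coordinates. Near a point of a seam arc where $g$ is analytic, $g$ extends to an anti-conformal map that reflects across the arc, and this gives the standard chart: $z$ on one side and $\overline{\text{(analytic continuation of } g)}(z)$ on the other. This leaves a finite set $P\subset S$ of exceptional points, namely the images of the corners and of $\Ss$. A neighbourhood of each point of $P$ is a Riemann surface homeomorphic to a punctured disk. To fill in these points conformally, I must rule out the annulus type. I would do this with the explicit local pictures of Subsections~\ref{fbs_para_fixed_local_dyn_subsec}–\ref{fbs_para_twocycle_local_dyn_subsec}. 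Near an ideal vertex the two sides of $\partial T$ are geodesics meeting tangentially, and the involution is a parabolic-type pairing. Conjugating to $z\mapsto z+1$ in a horodisk model shows that the glued neighbourhood is a once-punctured disk. At finite corners, where the pairing is Möbius with fixed point data, a power map $z\mapsto z^k$ straightens the neighbourhood. I expect this step, controlling the conformal type at the cusps, to be the main technical obstacle. After it, $S$ is a compact simply connected Riemann surface, so $S\cong\Cc$.

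Next I normalize the uniformization $\Theta\colon S\to\Cc$. Swapping the two copies defines a map $\iota\colon S\to S$. It is compatible with $\sim$, since $x\sim y$ exactly when $y\sim x$ is read with the copies exchanged. It is a conformal involution. On the seam, $\iota$ acts as $g$, so it has exactly two fixed points there, the two fixed points of the orientation-reversing circle involution $g|_{\partial\tilde T}$. I choose $\Theta$ to send $\infty\in\Omega_1$ to $0$, $\infty\in\Omega_2$ to $\infty$, and one seam fixed point of $\iota$ to $1$. Then $\Theta\iota\Theta^{-1}$ is a Möbius involution that swaps $0$ and $\infty$ and fixes $1$, so it equals $\eta$. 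Set $\cV^0:=\Theta(\mathrm{int}\,\Omega_1)$ and $\cV^\infty:=\Theta(\mathrm{int}\,\Omega_2)$. Properties (1) and (2) follow: the common boundary is the seam, which is piecewise analytic in these charts, and it is $\eta$-invariant.

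Finally I define $\mathfrak h$. On $\ovl{\cV^0}$ it is the inverse chart $\Theta^{-1}$ followed by the identification with $\ovl\Omega$. On $\ovl{\cV^\infty}\sm\{\infty\}$ it is $g\circ\psi_\infty$, where $\psi_\infty$ is the same inverse chart for copy $2$. These two definitions agree on the seam by the very definition of $\sim$. The resulting map is holomorphic off the seam and continuous across it, and the seam is removable: it is analytic except at finitely many points, where continuity suffices. Since $\mathfrak h|_{\cV^0}$ is univalent onto a neighbourhood of $\infty$ with $\mathfrak h(0)=\infty$, the point $0$ is a simple pole. There are no other poles, because $g$ takes finite values on $\C\sm\tilde T$. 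This gives (3), (4) and (5), with $\infty$ being the essential singularity coming from $g$. For (6), $\iota$ exchanges the two copies identically, so on $\ovl{\cV^\infty}$ we have $\mathfrak h\circ\eta=\psi_\infty$. Substituting this into (5) yields $g=\mathfrak h\circ\eta\circ(\mathfrak h|_{\ovl{\cV^0}\sm\{0\}})^{-1}$.
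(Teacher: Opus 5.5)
The gap is at the step you flag as the main obstacle: showing that the exceptional points of the welded surface are punctures rather than annular ends. Your horodisk argument does not close it.

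Your overall scheme matches the paper's. You weld two copies of $\ovl\Omega$ along $\partial\tilde T$ by the involution $g$, uniformize, identify the swap $\iota$ with $\eta$, and take $\mathfrak h$ to be the identity on one copy and $g$ on the other. Your normalization of $\Theta$ (the two copies of $\infty$ sent to $0$ and $\infty$, a seam fixed point of $\iota$ sent to $1$) is fine. It gives items (1)--(6) once $S$ is known to be $\Cc$.

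\textbf{Why the horodisk model fails.} It only describes the part of a neighbourhood of a singular point $p\in\partial\tilde T\cap\partial\tilde U$ that lies in $\tilde U\sm\tilde T$, where $g$ is conjugate to the M\"obius pairing of $F_G$. In the welded surface this part does not surround the welded point. Gluing two copies of the horns of $\ovl\disk\sm T$ at an ideal vertex gives half-neighbourhoods with the vertex on their boundary. This is why the ideal vertices reappear as touching points of the components of $\pmb{\cW}$ in Section~\ref{corr_sec}.

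Every loop around the welded point also crosses the two copies of $\Omega\sm\ovl{\tilde U}$ near $p$. There $g$ is conjugate to $f$ near a parabolic fixed point, or, in the attracting case, a repelling one (while $p$ is parabolic for $F_G$). Moreover, $\partial\tilde U$ is only known to be a Jordan curve at $p$. So there is no translation normal form on a full neighbourhood of $p$. Whether such a welded end is a puncture or an annulus is a genuine type question that depends on the boundary identification.

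\textbf{Two smaller points.} First, $\partial\tilde T$ has no finite corners: by Theorem~\ref{Thm:FBSProperties}(3) its only singular points are the $p$ ideal vertices. Second, near a regular seam point the chart on the second copy should be the holomorphic continuation of $g$. That continuation carries the $\Omega$-side at $y$ to the $\tilde T$-side at $g(y)$. Composing with complex conjugation, as you propose, gives an anti-holomorphic chart.

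\textbf{The missing idea.} Test the end with your own $\mathfrak h$, using the mapping structure of $g$ on a full neighbourhood of $p$.
\begin{itemize}
\item Take a Jordan curve $\alpha$ around $p$ meeting $\partial\tilde T$ in two points $q$ and $r=g(q)$. Let $B_\alpha$ be the disk it bounds and set $\tilde T_\alpha:=\tilde T\cap B_\alpha$.
\item Combine the local picture of Subsection~\ref{fbs_para_fixed_local_dyn_subsec}, transported by the conformal conjugacy, with the univalent branch of $f$ at its fixed point. This shows that near $p$ the set $g^{-1}(\tilde T_\alpha)$ consists of two triangles, each mapped univalently onto $\tilde T_\alpha$. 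Also, $g^{-1}(B_\alpha\sm\ovl{\tilde T_\alpha})$ is a Jordan domain mapped univalently onto $B_\alpha\sm\ovl{\tilde T_\alpha}$. Let $\beta$ be the local preimage arc of $\alpha$ joining $q$ and $r$.
\item Consider the annulus in the welded surface bounded by $\alpha$ (first copy) and $\beta$ (second copy). The map $\mathfrak h$ sends it, as an unbranched double cover, onto the punctured disk $B_\alpha\sm\{p\}$.
\item The punctured disk has infinite modulus, hence so does the annulus, and the end is a puncture.
\end{itemize}
Parabolic $2$-cycles are treated the same way, pairing a neighbourhood of $p$ in one copy with a neighbourhood of $g(p)$ in the other. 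Only after this step can you conclude $S\cong\Cc$ and extend $\iota$ and $\mathfrak h$ across these points.
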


\begin{proof}
We start by recalling that the factor Bowen--Series map $F_G:\overline{\D}\setminus T\to\overline{\D}$ restricts to $\partial T$ as an orientation-reversing involution. Hence, the conformal combination $g \colon \C \, \sm \tilde T \to \C$ is an orientation-reversing involution on $\partial \widetilde{T}$.
Since $\partial T$ is a piecewise non-singular real-analytic curve, the same is true for $\partial\widetilde{T}$. Further, the set $\mathfrak{s}$ of singularities of $\partial\widetilde{T}$ is finite, $g(\mathfrak{s})=\mathfrak{s}$, and each point of $\mathfrak{s}$ is either a fixed point or of period two under~$g$.

Let $\cV$ be the unbounded simply connected domain $\widehat{\C}\,\setminus\,\clo{\widetilde{T}}$; in this way $\overline{\cV}\setminus\{\infty\}$ is the domain of definition of $g$. Further let $\partial^0 \cV:=\partial\cV\setminus\mathfrak{s}$. We note that $g$ extends as an analytic diffeomorphism in a pinched neighborhood of $\partial^0\cV$ (i.e., to an open set disjoint from $\mathfrak{s}$ and containing~$\partial^0 \cV$). 

Consider two copies of $\cV\sqcup\partial^0\cV$, and weld them via the orientation-reversing boundary involution $g:\partial^0\cV\to\partial^0\cV$ (cf. \cite[Chapter~2, Page~117]{AS60}). In this way, we identify a point $x \in \partial^0\cV$ in one copy with the point $g(x) \in \partial^0 \cV$ in the second copy. After the uniformization, we obtain a welded genus--zero Riemann surface, which we denote by $\Sigma$. This surface $\Sigma$ consists of two simply connected domains $\Sigma^\pm$ corresponding to the two copies of $\cV$ under the uniformization and their common boundary. Let us denote the conformal maps $\phi_\pm \colon \Sigma^\pm \to \cV$ on each of the copies that arise from uniformization (see Figure~\ref{welding_fig}). The maps $\phi_\pm$ extend homeomorphically to the closures, and the boundary extensions satisfy the relation $\phi_+=g\circ\phi_-$ on $\partial\Sigma^-=\partial\Sigma^+$.

Since $g\vert_{\partial\widetilde{T}}$ is an involution, we obtain a conformal involution \begin{equation*}
    \iota:\Sigma\to\Sigma,\quad 
    \begin{cases}
     \phi_-^{-1}\circ\phi_+,\quad \mathrm{on}\ \overline{\Sigma^+}  \\
     \phi_+^{-1}\circ\phi_-,\quad \mathrm{on}\ \Sigma^-.
    \end{cases}
\end{equation*} 
This involution $\iota$ exchanges the two conformal copies $\Sigma^\pm$ of $\cV$ in $\Sigma$, and preserves their common boundary $\partial \Sigma^+ = \partial \Sigma^-$. In fact, $\iota\vert_{\partial \Sigma^+}$ is conjugate to $g\vert_{\partial\widetilde{T}}$ via the uniformizing map $\phi_+$. 

\vskip -4mm
\begin{figure}[h!]
\includegraphics[width=0.9\textwidth]{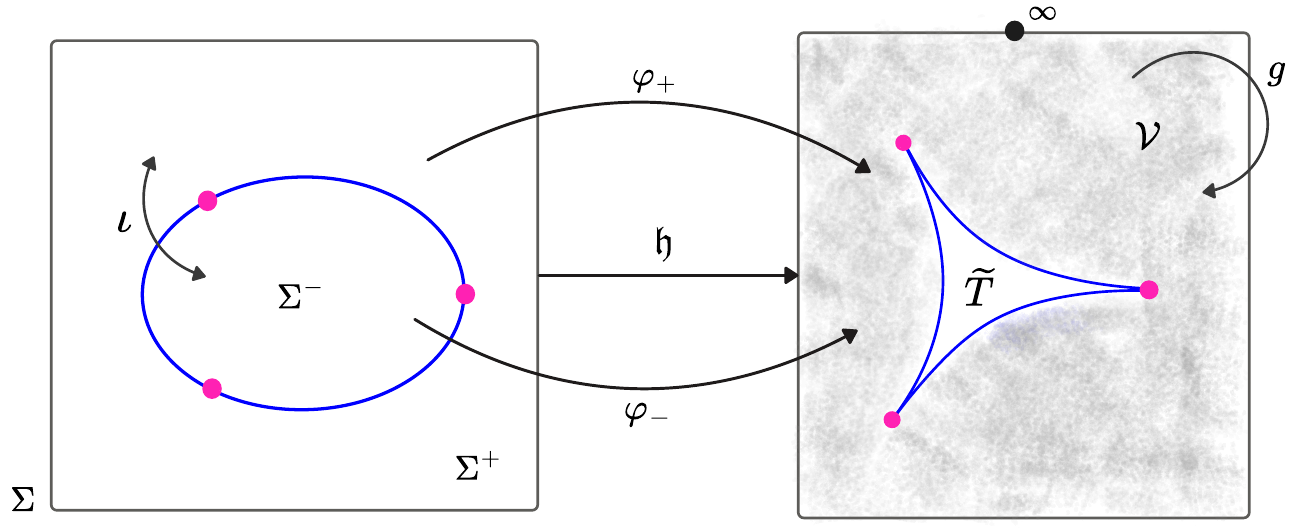}
\caption{Illustrated is the welding construction of Theorem~\ref{Thm:Analytic}.}
\label{welding_fig}
\end{figure}

Define a meromorphic map $\mathfrak{h}:\Sigma\to\widehat{\C}$ as
$\mathfrak{h}\equiv \phi_-$ on $\Sigma^-$ and $\mathfrak{h}\equiv g \circ \phi_+$ on $\Sigma^+\,\setminus\,\{\phi_+^{-1}(\infty)\}$. The fact that the piecewise definitions agree along $\partial \Sigma^+ = \partial \Sigma^-$ follows from the fact that the welding was done via the map $g$; i.e., from the relation $\phi_+=g\circ\phi_-$ on $\partial\Sigma^-=\partial\Sigma^+$.

We will argue that $\Sigma$ is a punctured sphere; i.e., the boundary components of $\Sigma$ corresponding to the singular points of $\partial\widetilde{T}$ are points. To this end, we need to take a careful look at the action of $g$ near the points of $\mathfrak{s}$. 

Let $p$ be a fixed point in $\mathfrak{s}$. Note that $p$ is the result of the welding of a parabolic fixed point of the factor Bowen--Series map $F_G$ and a repelling/parabolic fixed point of $f$.
The discussion of Section~\ref{fbs_para_fixed_local_dyn_subsec} now implies the following structure.
Let $\alpha$ be a simple closed curve surrounding $p$ intersecting $\partial\widetilde{T}$ at two points $q, r$ such that $g(q)=r$ (see Figure~\ref{mating_parabolic_fixed_fig}, right). Denote the disk bounded by $\alpha$ as $B_\alpha$, and set $\widetilde{T}_\alpha:=\widetilde{T}\cap B_\alpha$ (shaded in blue). The local preimage of $\alpha$ under $g$ is a simple arc $\beta$ connecting $q, r$. Further, $g^{-1}(\widetilde{T}_\alpha)$ is the union of two topological triangles (shaded in orange) and $g:g^{-1}(\widetilde{T}_\alpha)\to\widetilde{T}_\alpha$ is a $2:1$ covering map. On the other hand, $g^{-1}(B_\alpha\setminus\overline{\widetilde{T}_\alpha})$ is a Jordan domain (shaded in green) which maps univalently onto $B_\alpha\setminus\overline{\widetilde{T}_\alpha}$ under $g$.

\begin{figure}[ht!]
\includegraphics[width=0.97\textwidth]{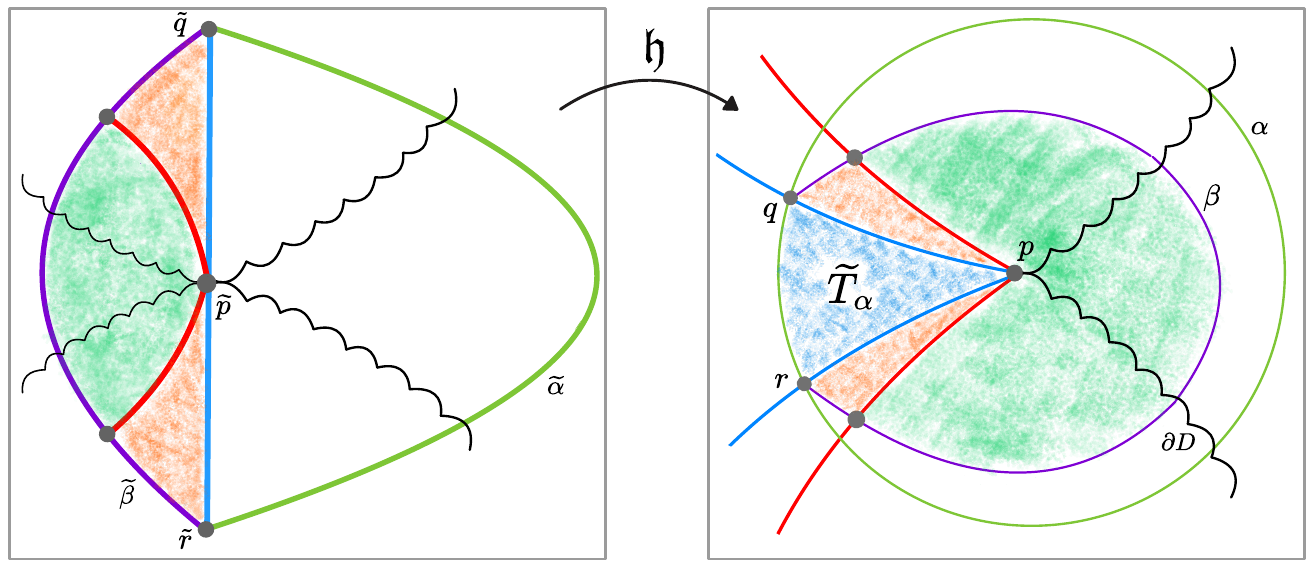}
\caption{Depicted is the local dynamics of a conformal mating near a parabolic fixed point.}
\label{mating_parabolic_fixed_fig}
\end{figure}

The construction of $\Sigma$ welds the arc $\arc{q,p}\setminus\{p\}\subset\partial \widetilde{T}_\alpha$ (respectively, $\arc{r,p}\setminus\{p\}\subset\partial \widetilde{T}_\alpha$) in the first copy of $\cV\cup\partial^0\cV$ with the arc $\arc{r,p}\setminus\{p\}\subset\partial \widetilde{T}_\alpha$ (respectively, $\arc{q,p}\setminus\{p\}\subset\partial \widetilde{T}_\alpha$) in the second copy of $\cV\cup\partial^0\cV$. We denote the image of $\alpha$ (belonging to the first copy of $\cV\cup\partial^0\cV$) and $\beta$ (belonging to the second copy of $\cV\cup\partial^0\cV$) in $\Sigma$ as $\widetilde{\alpha}, \widetilde{\beta}$, respectively (see Figure~\ref{mating_parabolic_fixed_fig}, left). Note that the point $p$ itself corresponds to a boundary component of $\Sigma$. In particular, the Jordan curve $\widetilde{\alpha}\cup\widetilde{\beta}$ encloses an annulus $\widetilde{A}$ in $\Sigma$. The map $\mathfrak{h}$ is now easily seen to be a holomorphic double covering from $\widetilde{A}$ onto the (degenerate) annulus $B_\alpha\setminus\{p\}$. Since $B_\alpha\setminus\{p\}$ is biholomorphic to a punctured disk, it has infinite modulus. It follows that the modulus of $\widetilde{A}$ is also infinite. Standard results for Riemann surfaces now tell us that the boundary component of $\Sigma$ associated with the parabolic fixed point $p\in\mathfrak{s}$ reduces to a point and hence the welding extends to the boundary component, which we denote by $\widetilde{p}$ (cf. \cite[Theorem~1]{Oik63}).

A similar argument can be used to show that the boundary components of $\Sigma$ associated with the parabolic $2-$cycles in $\mathfrak{s}$ are also points.

It follows that $\Sigma$ is a punctured sphere. The map $\iota$ extends continuously to the punctures, and hence is a conformal map on $\Cc$. Therefore, $\iota$ is a M{\"obius} map. Every M{\"obius} involution is M{\"obius} conjugate to $\eta(z) = 1/z$ once we send two fixed points of $\iota$ to $\pm 1$. Hence, we can assume that the involution $\iota$ is $\eta(z) = 1/z$. The map $\mathfrak{h}:\Sigma\to\widehat{\C}$ extends to all but one punctures (recall that $g$ has an essential singularity coming from that of $f$), and we can further pre-compose $\mathfrak{h}$ with a M{\"o}bius map to require that $\mathfrak{h}:\C\to\widehat{\C}$ is a meromorphic map with a simple pole at $0$. The required domains $\cV^0$, $\cV^\infty$ are the images of $\Sigma^\pm$ after all these coordinate changes.
\end{proof}

\subsection{The line complex of $\mathfrak{h}$ for maps in the Speiser class: an example}\label{speiser_subsec}

In this subsection, we will describe a procedure that shows how to construct the meromorphic map $\mathfrak{h} \colon \C \to \Cc$ based on the mapping properties of the starting map $f$ and the group $G$. For this, we will use the concept of \emph{line complexes}. Below, we explain the key steps in the construction of line complexes, and refer the reader to \cite[Chapter 7]{GO} for further in-depth discussion.{We first recall the general construction of line complexes. We then illustrate the construction in the specific case where $f(z)=\frac{\pi}{2}\sin z \in \mathscr K_{\attr}(2)$ is combined with the modular group $G\in\mathscr G_{\attr}(2)$.} In the general case, the construction is analogous.

\subsubsection{{Line complexes: general procedure.}} Let $f$ be a transcendental, entire or meromorphic, function with finitely many singular values, that is, $\# S(f) < \infty$. For such $f$, let $\gamma \subset \Cc$ be an oriented simple closed curve passing through all points in $S(f) \cup \{\infty\}$. In this way, $\Cc \sm \gamma$ consists of two regions, $H_i$ (interior) and $H_e$ (exterior). The curve $\gamma$ can be viewed as a graph with vertex set $S(f) \cup \{\infty\}$, and the faces being $H_{i/e}$. Consider the graph $\Gamma$ dual to $\gamma$; we can view $\Gamma$ as embedded into $\C$. The vertex set of $\Gamma$ consists of two points $p_{i/e}$ with $p_i \in H_i$ (internal vertex) and $p_e \in H_e$ (external vertex). The faces of $\Gamma$ correspond to the vertices of $\gamma$, and each edge of $\Gamma$ intersects exactly one edge of $\gamma$ exactly once. 

Let $\tG := f^{-1}(\Gamma) \subset \C$ be the full preimage of $\Gamma$. The graph $\tG$ is unbounded, each connected component $\widetilde V$ of $\C \sm \tG$ is mapped over the corresponding component $V$ of $\C \sm \Gamma$ and $f \colon \widetilde V \to V$ is a proper map of finite degree (if $V$ contains a critical value), or of infinite degree (if $V \subset \C$ is either unbounded or contains a finite asymptotic value); in the last case, $f \colon \widetilde V \to V$ is a universal covering. The graph $\tG$ is bipartite: each edge is mapped $1:1$ over an edge connecting $p_i$ and $p_e$. 

Following \cite{GO}, the \emph{(marked) line complex} of $f$ is the graph $\tG = f^{-1}(\Gamma)$ together with the labels of the singular values along the curve $\gamma$. For a fixed finite set of singular values, the marked line complex defines the {corresponding meromorphic map} $f$ uniquely up to pre-composition with an affine map (see \cite{GO} and \cite{LE}). Equivalently, if we do not fix the position of singular values, but fix their labeling along $\gamma$, the map $f$ is uniquely defined up to pre- and post-composition with quasiconformal homeomorphisms of $\C$ {preserving the labeling} (compare the discussion in \cite{LE}).     

\subsubsection{{Example: the sine map and the modular group.}} The map $f(z)=\frac{\pi}{2}\sin z$ lies in $\mathscr K_{\attr}(2)$, and $S(f) = \{-\pi/2, \pi/2\}$, $\Crit(f) = \{\pi/2 + k\pi, \,k\in \mathbb Z\}$. {Throughout this example, $G$ denotes the modular group, equipped with its factor Bowen--Series map $F_G$.} The line complex of $f$, with the choice of $\Gamma$ indicated, is shown in Figure~\ref{Fig:LCSine}. Assume that the marked super-attracting immediate basin is the Fatou component $U$ containing $-\pi/2$.

\begin{figure}[ht]
\captionsetup{width=0.98\linewidth}
    \centering
    \includegraphics[width=1.\linewidth]{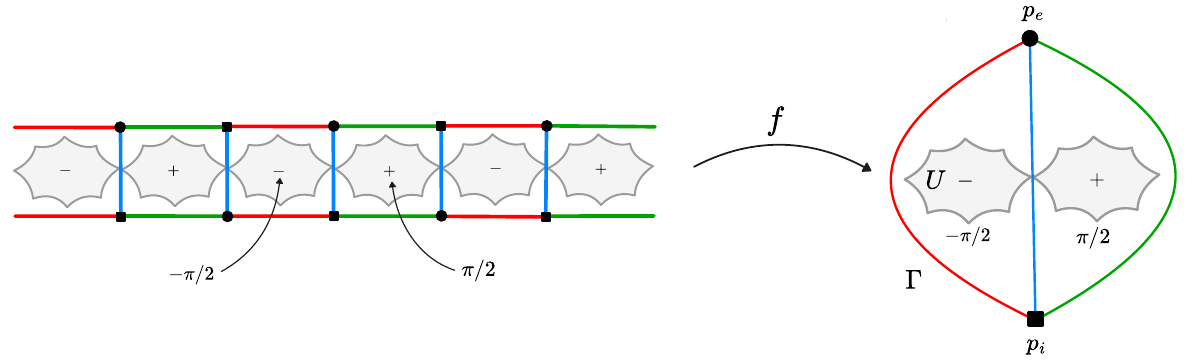}
    \caption{The line complex of $f(z) = \frac{\pi}{2}\sin(z)$. The graph $\Gamma$ consists of three edges (colored in green, blue, and red) connecting a pair of vertices $p_i$, $p_e$. The Fatou components containing the singular values $\pm \pi/2$ (on the right) and the critical points (on the left) are shown in gray.}
    \label{Fig:LCSine}
\end{figure}

In preparation for the construction of the map $g$, let us modify the graph $\Gamma$, and hence its pullback line complex, by introducing into $\Gamma$ two extra edges that surround the unique fixed point $0$ on $\partial U$; see Figure~\ref{Fig:LCext}.   

\begin{figure}[ht]
\captionsetup{width=0.98\linewidth}
    \centering
    \includegraphics[width=1.\linewidth]{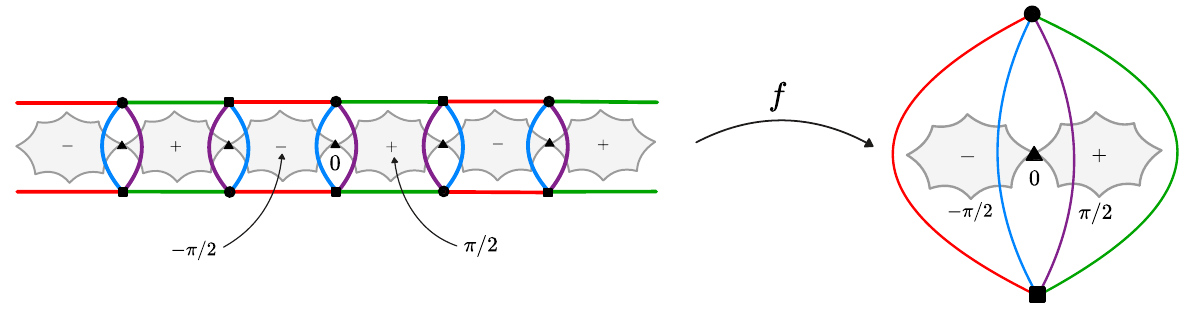}
    \caption{The modification of the line complex of $f(z) = \frac{\pi}{2}\sin(z)$.}
    \label{Fig:LCext}
\end{figure}

After the combination (given by Theorem~\ref{Thm:mating}), we obtain the function $g \colon \C \sm \widetilde T \to \C$, where the dynamics in the basin $U$ of $f$ was replaced by the dynamics of the factor Bowen--Series map $F_G \colon \overline{\disk} \sm T \to \overline{\disk}$ coming from the modular group (see the top of Figure~\ref{Fig:g}). {Here $\phi\colon U\to\disk$ is the Riemann map used in Theorem~\ref{Thm:mating}, $\psi$ is the David extension of the boundary conjugacy with $F_G$, and $\Psi$ is the David straightening map. We normalize $\phi$ and $\psi$ so that $\psi(\phi(-\pi/2))=0$. For the modular group, $n=3$ and $p=1$. Hence, the factor Bowen--Series map has one interior critical value and one singular boundary point. After an affine conjugation, we may assume that the critical values of $g$ are precisely $\pm\pi/2$. The remaining singular value of $\mathfrak h$ comes from the singular boundary point of the factor tile, and in this normalization it is $\Psi(0)$.} Thus $S(\mathfrak{h})=\{\frac{\pi}{2}, -\frac{\pi}{2}, \Psi(0)\}$.

\begin{figure}[ht]
\captionsetup{width=0.98\linewidth}
    \centering
    \includegraphics[width=1.\linewidth]{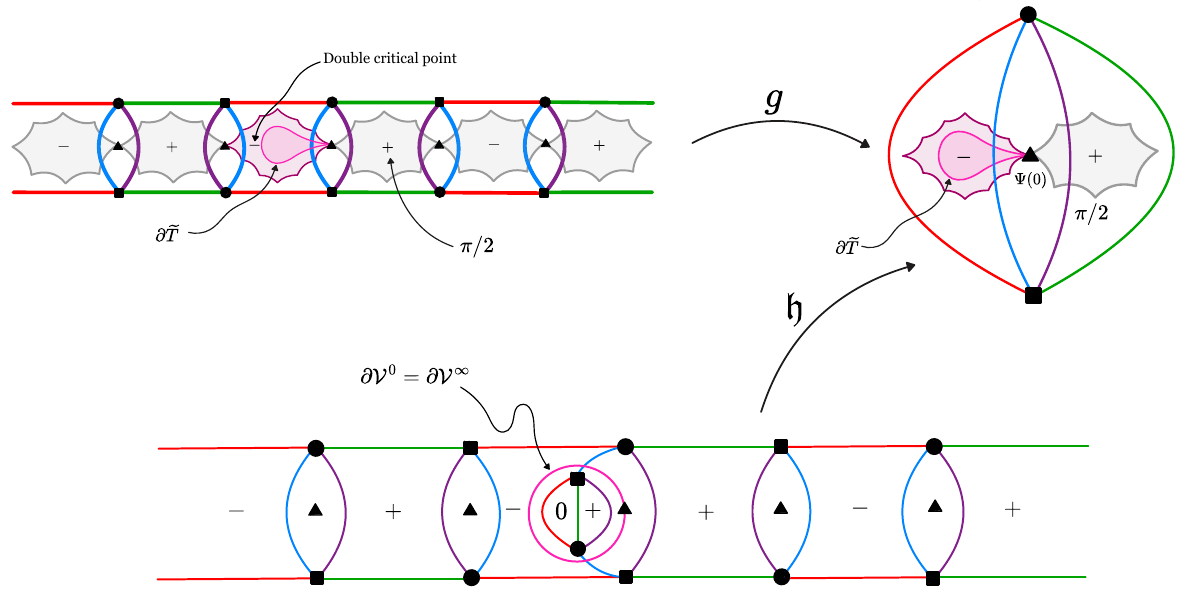}
    \caption{{Top:} the map $g$ obtained after the conformal combination of the map dynamics and the group dynamics. {Bottom: the domains $\mathcal V^\infty$ and $\mathcal V^0$ used, via Theorem~\ref{Thm:Analytic}, to construct the meromorphic map $\mathfrak h$.}}
    \label{Fig:g}
\end{figure}

The map $\mathfrak{h}$ is constructed as follows (compare Figure~\ref{Fig:g}, bottom); here we will use the conclusion of Theorem~\ref{Thm:Analytic}. In Figure~\ref{Fig:g} (bottom), the outside of the pink circle is the domain $\mathcal V^\infty$, the inside is the domain $\mathcal V^0$. {On $\mathcal V^\infty$, Theorem~\ref{Thm:Analytic} gives a conformal map $\psi_\infty\colon(\overline{\mathcal V^\infty},\infty)\to(\widehat{\C}\sm\tilde T,\infty)$ such that $\mathfrak h=g\circ\psi_\infty$ on $\overline{\mathcal V^\infty}\sm\{\infty\}$. Thus the part of the line complex of $\mathfrak h$ contained in $\mathcal V^\infty$ is conformally identified with the corresponding part of the line complex of $g$ in $\C\sm\tilde T$} (see the top left, outside of the tear drop-shaped pink region). In $\mathcal V^0$, the map $\mathfrak{h}|_{\mathcal V^0}$ sends $(\mathcal V^0, 0)$ conformally over $(\Cc \sm \tilde T, \infty)$. Therefore, inside the pink circle {$\mathfrak h^{-1}(\Gamma)\cap\mathcal V^0$ is a conformal copy of $\Gamma\sm\tilde T$} (i.e., the red, purple, and green edges, as well as two pieces of blue edges). The resulting line complex of $\mathfrak{h}$ is shown in Figure~\ref{Fig:LCh}.
\begin{rem}{
In this example, $\mathfrak h$ has three singular values, which is one more than $f$. In general, however, the number of new singular values is not determined by $d$ alone. For $G\in\mathscr G_{\attr}(d)$ one has $d+1=n(G)p(G)$: the number $p(G)$ counts the singular boundary points of the factor tile, while $n(G)$ controls the interior ramification. Thus the group side contributes the boundary singular values coming from these $p(G)$ points and, when $n(G)>1$, the interior critical value of the factor Bowen--Series map. These contributions need not be disjoint from the singular values inherited from $f$; the final number of singular values of $\mathfrak h$ is the cardinality of the resulting union, which is still a finite number as long as $\#S(f)$ is finite. In the modular group example, $p(G)=1$, and the interior critical value is normalized to coincide with $-\pi/2$.}
\end{rem}

\begin{figure}[ht]
\captionsetup{width=0.98\linewidth}
    \centering
    \includegraphics[width=1.\linewidth]{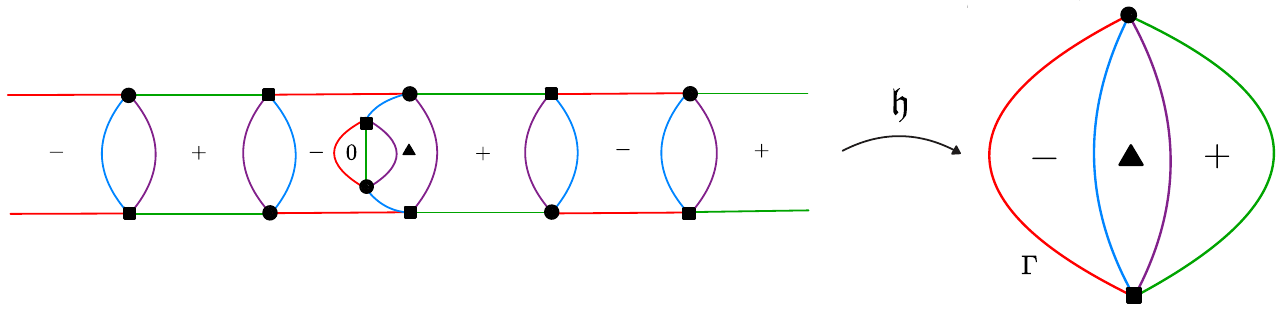}
    \caption{The line complex of the meromorphic map $\mathfrak{h}$ for the function $f(z) = \frac{\pi}{2}\sin(z) \in \mathscr{K}_{\text{attr}}(2)$ (with the marked immediate basing of $-\pi/2$) and the modular group $G \in \mathscr{G}_{\text{attr}}(2)$. The map $\mathfrak{h}$ has a unique simple pole at $0$ and three finite singular values (critical values) marked with $+, -, \blacktriangle$ on the right. The corresponding critical points are marked with the respective smaller symbols on the left. The component on the left marked with $0$ is mapped by $\mathfrak{h}$ univalently over the unique unbounded component of $\Cc \sm \Gamma$.}
    \label{Fig:LCh}
\end{figure}

\section{Combinations realized as correspondences}\label{corr_sec}

As in Section~\ref{analytic_description_sec}, let $g \colon\C\setminus\widetilde{T}\to\C$ be a conformal combination between a map $f\in\Kstar(d)$ and a group $G\in\Gstar(d)$ (more precisely, the factor Bowen--Series map $F_G$ associated with $G$). 
Further, let $\mathfrak{h}$ be the meromorphic function from Theorem~\ref{Thm:Analytic}.
Recall that the conformal combination $g$, the meromorphic function $\mathfrak{h}$, and the M{\"o}bius involution $\eta$ fit into following commutative diagram:
\begin{equation}
\begin{tikzcd}
	{\overline{\mathcal{V}^0}\,\sm\{0\}} && {\C \sm \tilde T} \\
	\\
	{\overline{\mathcal{V}^\infty}\,\sm\{\infty\}} && {\mathbb{C}}
	\arrow["{\mathfrak{h}}", from=1-1, to=1-3]
	\arrow["\eta",swap, from=1-1, to=3-1]
	\arrow["{\mathfrak{h}}"', from=3-1, to=3-3]
	\arrow["g"',swap, from=1-3, to=3-3]
\end{tikzcd}
\label{anal_char_cd}
\end{equation}
We define a holomorphic correspondence $\mathfrak{C}\subset \C_* \times \C_*$ of bi-degree $(\infty:\infty)$ as
\begin{equation}
(z, w) \in \mathfrak{C} \iff \frac{\mathfrak{h}(w)-\mathfrak{h}(\eta(z))}{w- \eta(z)}=0,
\label{corr_eq}
\end{equation}
where we use the notation $\C_*:=\C\,\setminus\{0\}$. 
The correspondence $\mathfrak{C}$ can also be thought of as a multi-valued map whose forward branches are given by
\begin{align}
z\mapsto w=\mathfrak{h}^{-1}(\mathfrak{h}(\eta(z)))\,\setminus\,\{\eta(z)\},
\label{forward_corr_eqn}
\end{align}
and backward branches are given by
\begin{align}
w\mapsto z=\eta(\mathfrak{h}^{-1}(\mathfrak{h}(w)))\,\setminus\,\{\eta(w)\}. 
\label{backward_corr_eqn}
\end{align}
Note that the local branches of the correspondence are defined by $\eta$ and the (local) deck transformations of $\mathfrak{h}$.

It is readily seen from~\eqref{anal_char_cd} and Definition~\ref{corr_eq} that for $z\in \overline{\cV^0}$,
\begin{equation}
 (z, w) \in \mathfrak{C} \iff \mathfrak{h}(w) = \mathfrak{h}(\eta(z)) = g(\mathfrak{h}(z)),\ \ w\neq \eta(z), 
 \label{in_V_0_equation}
\end{equation}
while for $z \in \cV^\infty$,
\begin{equation}
    (z,w)\in \mathfrak{C} \iff \mathfrak{h}(w) = \mathfrak{h}(\eta(z)) = g^{-1}(\mathfrak{h}(z)),\ \ w\neq \eta(z),
    \label{outside_V_0_equation}
\end{equation}
for some inverse branch of $g$. This shows that a forward branch of the multi-valued map sends $z$ to $w$ if and only if 
the conformal combination $g$ or one of its inverse branches sends $\mathfrak{h}(z)$ to $\mathfrak{h}(w)$. Succinctly, the meromorphic function $\mathfrak{h}$ lifts the forward/backward branches of $g$ to the multi-valued map $\mathfrak{C}$. It is thus not surprising that the correspondence $\mathfrak{C}$ can also be interpreted as a combination of $f$ and $G$.

\subsection{Dynamically invariant sets for $\mathfrak{C}$}\label{inv_partition_corr_subsec}
The dynamical plane of $g$ admits an invariant partition
$$
\C=\mathcal{E}\sqcup \cR,\quad \mathrm{where}\quad \mathcal{E}=\bigcup_{k=0}^\infty g^{-k}\big(\tilde U\big).
$$
The sets $\cE,\cR$ will be referred to as the \emph{expulsion set} and the \emph{resident set} of $g$ respectively. Indeed, all points in $\cE$ are eventually expelled from (the interior of) the domain of definition of $g$, while points of $\cR$ reside there perpetually. The common boundary of $\cE$ and $\cR$ (i.e., the set $\partial\mathcal{E}=\partial\cR$) is the \emph{Julia set} of $g$, and it is precisely the image of the Julia set of $f$ under the quasiconformal homeomorphism $\psi$.  

Define
$$
\pmb{\cE}:= \mathfrak{h}^{-1}(\cE),\quad \text{and} \quad \pmb{\cR} := \mathfrak{h}^{-1}(\cR)
$$
as the pull-backs of the expulsion and resident sets of the conformal mating $g$ under the meromorphic map $\mathfrak{h}$. The common boundary of $\pmb{\cE}$ and $\pmb{\cR}$ is called the \emph{Julia set} of the correspondence $\mathfrak{C}$.

\begin{prop}\label{invariance_prop}\upshape 
The following statements hold.

\begin{enumerate}
    \item $\eta(\pmb{\cE}) = \pmb{\cE}$, and $\eta(\pmb{\cR}) = \pmb{\cR}$.
    \item Given a point $(z, w) \in \mathfrak{C}$, we have $z\in \pmb{\cE}$ (respectively, $z\in \pmb{\cR}$) if and only if $w\in \pmb{\cE}$ (respectively, $w\in \pmb{\cR}$). In particular, both $\pmb{\cR}$ and $\pmb{\cE}$ are fully invariant under $\frakC$. 
\end{enumerate} 
\end{prop}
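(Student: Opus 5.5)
The plan is to reduce both statements to the invariance of the partition $\C=\cE\sqcup\cR$ under $g$, using the commutative diagram \eqref{anal_char_cd}. The key input is that $\cE$ and $\cR$ are fully invariant for $g$ in the natural sense. For $x\in\C\sm\tilde T$ we have $x\in\cE$ iff $g(x)\in\cE$. This holds because $\cE=\bigcup_k g^{-k}(\tilde U)$, and $g(\tilde U\sm\tilde T)\subset\tilde U$ by Theorem~\ref{Thm:mating}\eqref{It:Map}. Hence a point in $\C\sm\tilde T$ lies in $\cE$ iff its image does, and the same holds for $\cR$ by complementation. We also note $\tilde T\subset\tilde U\subset\cE$.

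For (1), I would take $z\in\C_*$ and split into cases according to whether $z\in\overline{\cV^0}\sm\{0\}$ or $z\in\cV^\infty\sm\{\infty\}$. In the first case, Theorem~\ref{Thm:Analytic} gives $\mathfrak h(\eta(z))=g(\mathfrak h(z))$ with $\mathfrak h(z)\in\C\sm\tilde T$. By the $g$-invariance just established, $\mathfrak h(z)\in\cE$ iff $\mathfrak h(\eta(z))\in\cE$, that is, $z\in\pmb\cE$ iff $\eta(z)\in\pmb\cE$. In the second case, $\eta(z)\in\cV^0$, and we apply the first case to $\eta(z)$, using that $\eta$ is an involution. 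The statement for $\pmb\cR$ follows by taking complements, since $\pmb\cE\sqcup\pmb\cR=\mathfrak h^{-1}(\C)=\C_*$ because the only pole of $\mathfrak h$ is $0$. One small point to verify is that the points of $\partial\cV^0$ are covered consistently; they are, since the diagram holds on the closure $\overline{\cV^0}\sm\{0\}$.

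For (2), let $(z,w)\in\frakC$. By \eqref{corr_eq}, $\mathfrak h(w)=\mathfrak h(\eta(z))$, where the equality also covers the ramified case in which $w=\eta(z)$ is a critical point. Since $\pmb\cE=\mathfrak h^{-1}(\cE)$ is saturated with respect to the fibres of $\mathfrak h$, we get $w\in\pmb\cE$ iff $\eta(z)\in\pmb\cE$, and by (1) this holds iff $z\in\pmb\cE$. The same argument applies to $\pmb\cR$. Full invariance under $\frakC$ is then immediate from the definition.

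The only step needing care is the $g$-invariance on $\C\sm\tilde T$, in particular that a point of $\C\sm\tilde T$ whose image lies in $\cE$ is itself in $\cE$. This is tautological from the definition of $\cE$ as the union of pullbacks. The converse direction, that the image of a point of $\cE\sm\tilde T$ lies in $\cE$, uses the forward invariance $g(\overline{\tilde U}\sm\tilde T)\subset\overline{\tilde U}$. Here one must make sure that boundary points of $\tilde U$ are excluded from $\cE$, since $\cE$ is defined via the open set $\tilde U$, and that $g(\tilde U\sm\tilde T)\subset\tilde U$. The latter follows from the conjugacy with $F_G$ in Theorem~\ref{Thm:mating}\eqref{It:Group}, because $F_G(\D\sm\overline T)\subset\D$.
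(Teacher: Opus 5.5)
Your proof is correct and follows essentially the same route as the paper. You reduce (1) to the $g$-invariance of $\cE$ via the commutative diagram \eqref{anal_char_cd}, splitting into $z\in\overline{\cV^0}$ and $z\in\cV^\infty$, and then deduce (2) from $\mathfrak h(w)=\mathfrak h(\eta(z))$ together with (1). The differences are only cosmetic: you treat the $\cV^\infty$ case by applying the first case to $\eta(z)$ rather than invoking $g^{-1}$-invariance, you obtain $\pmb\cR$ by complementation in $\C_*$, and you spell out both directions of the $g$-invariance of $\cE$, which the paper uses implicitly.
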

\begin{proof} 
We will prove the assertions for the set $\pmb{\cE}$. The case of $\pmb{\cR}$ is similar. Suppose that $z\in \pmb{\cE}$. If $z\in\overline{\cV^0}$, then by~\eqref{anal_char_cd} and the $g-$invariance of the expulsion set $\cE$ implies that $\mathfrak{h}(\eta(z))\in\cE$. Similarly, if $z\in\cV^\infty$, then by~\eqref{anal_char_cd} and the $g^{-1}-$invariance of the expulsion set $\cE$ implies that $\mathfrak{h}(\eta(z))\in\cE$. In both cases, $\eta(z)\in\pmb{\cE}$. As $\eta$ is an involution, it follows that $\eta(\pmb{\cE})=\pmb{\cE}$.

Now let $(z, w) \in \mathfrak{C}$. Suppose that $z\in\pmb{\cE}$. By the previous paragraph, we have that $\eta(z)\in\pmb{\cE}$. Since $\mathfrak{h}(w)=\mathfrak{h}(\eta(z))$, it follows that $w\in\pmb{\cE}$. Conversely, if $w\in\pmb{\cE}$, then the relation $\mathfrak{h}(w)=\mathfrak{h}(\eta(z))$ implies that $\eta(z)\in\pmb{\cE}$. By the $\eta-$invariance of $\pmb{\cE}$, we have that $z\in\pmb{\cE}$.
\end{proof}

Recall that all points in the expulsion set $\cE$ eventually map to $\tilde U$ under $g$, and then get expelled to $\overline{\tilde T}$. Since the action of $g$ on $\tilde U$ is conformally conjugate to the map $F_G$, the set $\mathfrak{h}^{-1}(\tilde U)$ plays a special role for the correspondence $\mathfrak{C}$.
Let $\{\tilde U_\lambda\}$ be the components of $\cE$ such that $g(\tilde U_\lambda) = \tilde U$, and $\tilde U_\lambda\neq \tilde U$. By~\eqref{anal_char_cd} and the properties of $\mathfrak{h}$ listed in Theorem~\ref{Thm:Analytic}, we have that
\begin{equation*}
\begin{split}
\mathfrak{h}^{-1}(\tilde U) = &\Big(\big(\mathfrak{h}\big|_{\overline{\cV^0}}\big)^{-1}(\tilde U\,\sm\tilde T)\Big)\bigcup\Big(\big(\mathfrak{h}\big|_{\overline{\cV^\infty}}\big)^{-1}(\tilde U)\Big)\\
=&\Big(\big(\mathfrak{h}\big|_{\overline{\cV^0}}\big)^{-1}(\tilde U\,\sm\tilde T)\Big)\bigcup\Bigg(\eta\Big(\big(\mathfrak{h}\big|_{\overline{\cV^0}}\big)^{-1}\Big(g^{-1}(\tilde U)\Big)\Big)\Bigg)\\
=&\Bigg(\big(\mathfrak{h}\big|_{\overline{\cV^0}}\big)^{-1}(\tilde U\,\sm\tilde T)\Bigg)\bigcup\Bigg(\eta\Big(\big(\mathfrak{h}\big|_{\overline{\cV^0}}\big)^{-1}(\tilde U\,\sm\tilde T)\Big)\Bigg)\bigsqcup\Bigg(\bigsqcup_{\lambda} \eta\Big(\big(\mathfrak{h}\big|_{\cV^0}\big)^{-1}(\tilde U_\lambda)\Big)\Bigg).    
\end{split}    
\end{equation*}
We need to focus on the subset
\begin{equation}
\pmb{\mathcal{W}}:= \Bigg(\big(\mathfrak{h}\big|_{\overline{\cV^0}}\big)^{-1}(\tilde U\,\sm\tilde T)\Bigg)\bigcup\Bigg(\eta\Big(\big(\mathfrak{h}\big|_{\overline{\cV^0}}\big)^{-1}(\tilde U\,\sm\tilde T)\Big)\Bigg)\subset \mathfrak{h}^{-1}(\tilde U)
\label{tiling_comp_eq}
\end{equation}
to extract a group action from the dynamics of the correspondence.

In the next two subsections, we will prove the following theorem.
 
\begin{thm}[The final correspondence] \label{corr_mating_thm}
The correspondence $\mathfrak{C}$ is a mating of the group $G$ and the entire function $f$ in the following sense.
\begin{enumerate}[leftmargin=8mm]
    \item The correspondence $\mathfrak{C}$ has a forward branch (respectively, a backward branch) carrying $\pmb{\cR}\cap \overline{\cV^0}$ (respectively, $\pmb{\cR}\cap \overline{\cV^\infty}$) onto itself, and this branch restricted to the corresponding domains is conformally conjugate to 
    \[
    f \colon \C \,\sm \bigcup_{k=0}^\infty f^{-k}(U) \to \C \,\sm \bigcup_{k=0}^\infty f^{-k}(U).
    \]
    
    \item The branches of $\mathfrak{C}$ preserving $\pmb{\cW}$ generate a group of conformal automorphisms $\pmb{G}_{\pmb{\cW}}$ acting properly discontinuously on $\pmb{\cW}$. The subgroup of $\pmb{G}_{\pmb{\cW}}$ preserving any component of $\pmb{\cW}$ is conformally conjugate to the action of $G$ on $\D$. In particular, the quotient of $\pmb{\cW}$ under the action of the group $\pmb{G}_{\pmb{\cW}}$ is biholomorphic to~$\mathbb{D}/G$.
\end{enumerate}
\end{thm}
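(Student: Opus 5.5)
The plan is to lift the dynamics of $g$ to the correspondence through the conformal charts $\mathfrak{h}|_{\cV^0}$ and $\mathfrak{h}|_{\cV^\infty}$, using the commutative diagram~\eqref{anal_char_cd}, and then to invoke Theorem~\ref{Thm:mating} for both items.

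\textbf{Item (1).} Since $\cR\subset \C\sm\tilde U\subset \C\sm\tilde T$, and by Theorem~\ref{Thm:Analytic}(4) $\mathfrak{h}|_{\overline{\cV^0}\sm\{0\}}$ is a homeomorphism onto $\C\sm\tilde T$, conformal inside, the set $\pmb{\cR}\cap\overline{\cV^0}$ is identified with $\cR$ via $\mathfrak h$. For $z$ in this set I would define the forward branch
\[
\mathcal F:=(\mathfrak{h}|_{\overline{\cV^0}})^{-1}\circ g\circ \mathfrak{h}.
\]
By~\eqref{in_V_0_equation}, $\mathfrak h(\mathcal F(z))=g(\mathfrak h(z))=\mathfrak h(\eta(z))$. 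Moreover $\mathcal F(z)\neq\eta(z)$: the point $\mathcal F(z)$ lies in $\overline{\cV^0}$, whereas $\eta(z)\in\overline{\cV^\infty}$, and on $\partial\cV^0$ the two would coincide only at points where $g$ fixes a boundary point of $\tilde T$, which are not in $\cR\sm\mathfrak s$. (The points of $\mathfrak{s}$ need a separate check; I would handle them by continuity, or by noting that they belong to $\partial\cE$ and hence to the Julia set.) So $(z,\mathcal F(z))\in\frakC$. The invariance of $\cR$ under $g$ gives $\mathcal F(\pmb\cR\cap\overline{\cV^0})=\pmb\cR\cap\overline{\cV^0}$. Thus $\mathcal F$ is conformally conjugate via $\mathfrak h$ to $g|_\cR$, and Theorem~\ref{Thm:mating}\eqref{It:Map} conjugates this to $f$ off the grand orbit of $U$. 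For $\pmb\cR\cap\overline{\cV^\infty}$, I would conjugate by $\eta$: the map $\eta\circ\mathcal F^{-1}\circ\eta$ is a backward branch, by the symmetry of~\eqref{corr_eq} under $(z,w)\mapsto(\eta(w),\eta(z))$, which follows from~\eqref{backward_corr_eqn}.

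\textbf{Item (2).} Let $\pmb\cW^0:=(\mathfrak h|_{\overline{\cV^0}})^{-1}(\tilde U\sm\tilde T)$. By Theorem~\ref{Thm:mating}\eqref{It:Group} and Theorem~\ref{Thm:Analytic}, $\pmb\cW^0$ is conformally a copy of $\overline\D\sm T$ on which the branch $(\mathfrak h|_{\overline{\cV^0}})^{-1}\circ g\circ\mathfrak h$ is conjugate to $F_G$. Its image under $\eta$ is glued to $\pmb\cW^0$ along $\partial\cV^0$, and that gluing is exactly the welding by $g|_{\partial\tilde T}$. Hence the component $\pmb\cW_0=\pmb\cW^0\cup\eta(\pmb\cW^0)$ is conformally the double of $\overline\D\sm T$ glued along $\partial T$ by $F_G$. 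I would then build a conformal map $\Theta:\pmb\cW_0\to\D$ equal to $\chi\circ\mathfrak h$ on $\pmb\cW^0$ (where $\chi$ is the conjugacy to $F_G$) and equal to $F_G\circ\chi\circ\mathfrak h\circ\eta$ on $\eta(\pmb\cW^0)$. Here I would use that the pieces $h_i|_{D_i}$ (after the factor quotient) send $\D\sm T$ onto neighbouring tiles, so that the two halves fill a neighborhood of $\overline{T}$'s tile structure. Under $\Theta$, each branch of $\frakC$ preserving $\pmb\cW_0$ becomes either one of the M\"obius pieces of $F_G$ on $D_i$ or the side pairings composed with $\eta$. So the generated group consists of Möbius maps, namely the side pairings of $\Pi_G$ (and the rotation $M_\omega$, appearing through the critical fibre). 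This group is $G$, and $\overline{T}$ together with $\eta$ of it forms a fundamental domain; this yields proper discontinuity and the quotient $\D/G$. Other components of $\pmb\cW$ arise from other lifts and are handled by transporting this picture via $\eta$ and the deck transformations.

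\textbf{Main obstacle.} The hard step is the identification of the branch group with $G$ rather than with $\Gamma$ or a proper subgroup, in the factor case $n\ge3$. There $F_G$ is only a quotient of $A^{\mathrm{BS}}_\Gamma$, and the local deck transformations of $\mathfrak h$ near the critical fibre over $q_\omega(0)$ must be seen to produce $M_\omega$. I would first try to lift everything through $q_\omega$, using Theorem~\ref{Thm:FBSProperties}(5)--(6), to verify the claim equivariantly.
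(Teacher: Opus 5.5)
Your Part~(1) is the paper's argument: by~\eqref{anal_char_cd}, $\mathcal F$ is exactly the branch $(\mathfrak h|_{\overline{\cV^0}})^{-1}\circ\mathfrak h\circ\eta$, and $\eta$ conjugates it to the backward branch. One small correction: $\mathcal F=\eta$ on \emph{all} of $\partial\cV^0$, by injectivity of $\mathfrak h|_{\overline{\cV^0}}$, not only over fixed points of $g$. This is harmless, because $\pmb{\cR}\cap\partial\cV^0$ lies over $\mathfrak s$, and there your continuity argument works since $\frakC$ is closed in $\C_*\times\C_*$.

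Part~(2), however, has a genuine gap. First, by~\eqref{tiling_comp_eq} the set $\pmb{\cW}^0\cup\eta(\pmb{\cW}^0)$ is all of $\pmb{\cW}$, not one component, and there are no ``other lifts''. Its $p$ components are the $\pmb{\cW}_i$ of Lemma~\ref{lifted_tiling_top_lem}. Each is formed by the lift $\pmb{Y}_i$ of one piece $Y_i$ of $\clo\tilde U\sm\tilde T$ together with $\eta(\pmb{Y}_{i'})$, where $g$ maps the tile-side of $Y_i$ to that of $Y_{i'}$.

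More seriously, $\Theta$ is not conformal. Since $\mathfrak h=g\circ\mathfrak h\circ\eta$ on $\overline{\cV^\infty}\sm\{\infty\}$ and $\chi\circ g=F_G\circ\chi$, your second formula is just $\chi\circ\mathfrak h$. So $\Theta=\chi\circ\mathfrak h$ on all of $\pmb{\cW}$. This is a branched covering of total degree $np=d+1\ge 3$, of degree $n$ with one critical point of multiplicity $n-1$ on each component (Lemma~\ref{h_branched_cover_lem}). It is a chart of a component only when $n=1$. For $n\ge3$ there is nothing to conjugate by, and $F_G$ has no ``M\"obius pieces'' to conjugate to, since it has critical points. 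The lift you list as the obstacle must \emph{replace} $\Theta$. Uniformize each component by $\pmb{\Phi}_j\colon\D\to\pmb{\cW}_j$ with $\chi\circ\mathfrak h\circ\pmb{\Phi}_j(w)=w^n$. This is possible because $\pmb{\cW}_j$ is simply connected and $\mathfrak h|_{\pmb{\cW}_j}$ is unicritical. Then $\pmb{\Phi}_j^{-1}\big(\mathfrak h^{-1}(\tilde T)\cap\pmb{\cW}_j\big)=\Int\Pi$.

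Second, you never determine which branches of $\frakC$ preserve $\pmb{\cW}$, so the claim that they become side pairings is unsupported. The paper's key step (Lemma~\ref{deck_tau_lem}) is that the fibres of $\mathfrak h|_{\pmb{\cW}}$ are the orbits of one deck transformation $\tau$ of order $np$. In $\pmb{\Phi}$-coordinates it is $(w,j)\mapsto(w,j+1)$ for $j\le p-2$ and $(w,p-1)\mapsto(e^{2\pi i/n}w,0)$. Hence the branches preserving $\pmb{\cW}$ are exactly $\tau^j\circ\eta$, $1\le j\le np-1$, and they generate $\langle\eta,\tau\rangle$.

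These individual branches permute the components, so the stabilizer of $\pmb{\cW}_0$ is generated by \emph{compositions}:
\begin{itemize}
\item $\tau^p$, which $\pmb{\Phi}_0$ conjugates to $M_\omega$; this is exactly where $G$ rather than $\Gamma$ arises;
\item conjugates of the maps $\tau^r\circ\eta$ by powers of $\tau$, which $\pmb{\Phi}_0$ conjugates to the side pairings of $\Pi_G$.
\end{itemize}
The resulting fundamental domain is $\pmb{\Phi}_0(\Pi_G)$, a $1/n$-sector of the lifted tile, which lies in $\cV^\infty$. It is not ``$\overline T$ together with $\eta$ of it''. Proper discontinuity on $\pmb{\cW}$ and the identification of $\pmb{\cW}/\langle\eta,\tau\rangle$ with $\D/G$ then follow, because $\langle\tau\rangle$ permutes the components transitively.
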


\subsection{Appearance of the entire function $f$ in the correspondence $\mathfrak{C}$}\label{map_like_subsec}

\begin{proof}[Proof of Theorem~\ref{corr_mating_thm} (Part (1))]
By Proposition~\ref{invariance_prop}, we have that $\eta(\pmb{\cR}\cap\overline{\cV^0})=\pmb{\cR}\cap\overline{\cV^\infty}$. Hence, by definition of the correspondence,
    $$
    \Big(\mathfrak{h}\big|_{\overline
    {\cV^0}}\Big)^{-1}\circ \mathfrak{h}\circ \eta: \pmb{\cR}\cap \overline{\cV^0}\rightarrow\pmb{\cR}\cap \overline{\cV^0}
    $$
    is a forward branch of $\mathfrak{C}$. Evidently, the univalent map $\mathfrak{h}: \overline{\cV^0}\rightarrow \C\,\sm\tilde T$ carries $\pmb{\cR}\cap \overline{\cV^0}$ onto $\cR$, and conjugates the above forward branch to $g \colon \C \,\sm \bigcup_{k=0}^\infty g^{-k}(\tilde U) \to \C \,\sm \bigcup_{k=0}^\infty g^{-k}(\tilde U)$. By Theorem~\ref{Thm:mating}, the map $g \colon \C \,\sm \bigcup_{k=0}^\infty g^{-k}(\tilde U) \to \C \,\sm \bigcup_{k=0}^\infty g^{-k}(\tilde U)$ is conformally conjugate to $f \colon \C \,\sm \bigcup_{k=0}^\infty f^{-k}(U) \to \C \,\sm \bigcup_{k=0}^\infty f^{-k}(U)$. This establishes the statement about the forward branch of $\mathfrak{C}$.

    Note also that the involution $\eta$ restricts to a conformal conjugacy between the backward branch  
    $$
    \eta\circ \Big(\mathfrak{h}\big|_{\cV^0}\Big)^{-1}\circ \mathfrak{h}: \pmb{\cR}\cap\overline{\cV^\infty} \rightarrow \pmb{\cR}\cap\overline{\cV^\infty}
    $$
    of $\mathfrak{C}$ and the above forward branch of $\mathfrak{C}$. The result now follows.
\end{proof}

\subsection{Appearance of the group $G$ in the correspondence $\mathfrak{C}$}\label{group_like_subsec} 

The welding construction performed in Theorem~\ref{Thm:Analytic} and the definition of the set $\pmb{\cW}$ given in~\eqref{tiling_comp_eq} show that $\pmb{\cW}$ is an open set obtained by gluing two copies of $\tilde U\,\sm\tilde T$ along $\partial\tilde T$ via the involution $g \colon \partial\tilde T\to\partial\tilde T$. This fact can be used to see that $\pmb{\cW}$ is a disjoint union of $p$ simply connected domains (cf. \cite[Lemma~5.2, Figure~12]{MM}; see also the top-right in Figure~\ref{corr_group_action_fig}). The following lemma describes the topological structure of $\pmb{\cW}$. 
We recall the notation $n(\Sigma)$ from Definition~\ref{Def:OrbifoldClassForFBS}. 

\begin{lemma}\label{lifted_tiling_top_lem}
\noindent\begin{enumerate}[leftmargin=8mm]
\item The set $\displaystyle\clo{\pmb{\cW}}=\bigcup_{i=0}^{p-1} \overline{\pmb{\cW}_i}$ is connected, where each $\pmb{\cW}_i$ is a simply connected domain (in fact, a Jordan domain if $p\neq 1$) that is mapped as a degree $n=n(\Sigma)$ branched covering onto $\tilde U$ by $\mathfrak{h}$.
\item  If $p\neq 1$, then $\overline{\pmb{\cW}_i}\cap \overline{\pmb{\cW}}_{i+1}$ is a single point.
\item If $p\neq 1$, then $\overline{\pmb{\cW}_i}\cap \overline{\pmb{\cW}_j}=\emptyset$ if $\vert j-i\vert\neq1$, $i,j\in\Z/p\Z$.
\item If $p=1$, then the boundary $\pmb{\cW}$ is topologically a figure-eight curve.
\end{enumerate}
\end{lemma}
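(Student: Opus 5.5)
The plan is to transport the whole question to the group side, where it becomes a statement about the factor Bowen--Series map $F_G$ and its symmetric lift $A^{\mathrm{BS}}_\Gamma$.

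By Theorem~\ref{Thm:mating}\eqref{It:Group}, there is a conformal map $\chi_0\colon \clo\tilde U\to\ovl\D$ conjugating $g$ on $\clo\tilde U\sm\tilde T$ to $F_G$ on $\ovl\D\sm T$. It sends $\tilde U\sm\tilde T$ to $\D\sm\ovl T$ and $\partial\tilde T$ to $\partial T$. By \eqref{tiling_comp_eq} and the welding construction of Theorem~\ref{Thm:Analytic}, $\pmb{\cW}$ is the union of two copies of $\tilde U\sm\tilde T$, namely $\pmb{\cW}^0:=(\mathfrak h|_{\ovl{\cV^0}})^{-1}(\tilde U\sm\tilde T)$ and $\eta(\pmb{\cW}^0)$. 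These are glued along the open arcs of $\partial^0\cV$ by the involution $g|_{\partial\tilde T}$. Transporting by $\chi_0$, $\pmb{\cW}$ is conformally the surface $\mathcal X$ obtained from two copies of $\D\sm T$ by gluing $x\in\partial T\cap\D$ in one copy to $F_G(x)$ in the other. The ideal vertices $\Ss_G$ (with $\#\Ss_G=p$ by Theorem~\ref{Thm:FBSProperties}(3)) are not included; they correspond to the finitely many points of $\partial\pmb{\cW}$ lying over $\mathfrak s$. Under this identification, $\mathfrak h$ restricted to $\pmb{\cW}$ is conformally the map that is the identity on the first copy and $F_G$ on the second. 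This follows from $\mathfrak h=\phi_-$ on $\Sigma^-$, $\mathfrak h=g\circ\phi_+$ on $\Sigma^+$, and from $F_G$ mapping $\D\sm\ovl T$ onto $\D$ by Theorem~\ref{Thm:FBSProperties}.

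Next I identify $\mathcal X$ with a model in $\D$ using the lift $q_\omega$. Since $F_G\circ q_\omega=q_\omega\circ A^{\mathrm{BS}}_\Gamma$ and $q_\omega^{-1}(T)=\Int\Pi$, the doubled surface for $F_G$ is the $\langle M_\omega\rangle$-quotient of the doubled surface for $A^{\mathrm{BS}}_\Gamma$ on $\ovl\D\sm\Int\Pi$. For the latter, use the side pairings $h(s_i)$: the second copy of each branch region $D_i\sm\Int\Pi$ is realized as its image $h(s_i)(D_i\sm\Int\Pi)$, glued to $D_i\sm\Int\Pi$ along the side $s_i$. This presents the model explicitly as a union of $\Pi$-adjacent tiles of the tessellation by $\Gamma$. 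One then reads off the components of $\mathcal X$, which are indexed by the $p$ ideal vertices in one rotational sector, i.e.\ by $\Ss_G$. Each component is a topological disk, since it is a union of ideal polygons glued along single sides in a tree-like pattern. The restriction of $\mathfrak h$ to a component is, in the model, given on the lifted picture by a side pairing followed by $q_\omega(z)=z^n$. Hence it is a branched cover of degree $n$ onto $\tilde U$, branched only over the critical value $q_\omega(0)=0$ described in Theorem~\ref{Thm:FBSProperties}(6). I would cite \cite[Lemma~5.2]{MM} for this picture and verify the degree count via Riemann--Hurwitz. This gives item (1), except for connectedness of the closure.

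Items (2)--(4) and connectedness come from examining the boundary of $\mathcal X$ near the ideal vertices, using the local analysis of Sections~\ref{fbs_para_fixed_local_dyn_subsec} and~\ref{fbs_para_twocycle_local_dyn_subsec} and the punctures $\widetilde p$ from the proof of Theorem~\ref{Thm:Analytic}. Near a vertex $x\in\Ss_G$, the two arcs of $\partial T$ adjacent to $x$ are glued crosswise, by the fixed or period-two pattern. As a result, the two neighbouring components of $\pmb{\cW}$ meet at exactly the single puncture over $x$, and components that are not adjacent in the cyclic order never meet. This gives items (2) and (3) and the cyclic chain structure, hence connectedness of $\clo\pmb{\cW}$. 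When $p=1$, the single component touches itself at the one puncture from its two sides, so $\partial\pmb{\cW}$ consists of two loops through that point, which is a figure-eight. For the Jordan property when $p\neq1$, I would use the fact that each boundary point of a component, other than the punctures, lies over $\partial\tilde U$, where $\mathfrak h$ is injective on each of the two copies. Thus each boundary is a simple closed curve through its two punctures.

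I expect the main obstacle to be the bookkeeping in the second paragraph. Matching the components of the doubled surface with the sectors and ideal vertices, and checking the branching of $q_\omega$ inside the second copy, must be done separately for the cases $\delta_2\in\{0,1,2\}$, where some sides are paired with themselves, and for $n>1$.
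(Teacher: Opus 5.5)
Your route works, but for item (1) it differs from the paper's, and the lifted model in your second paragraph is misdescribed.

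\textbf{Comparison with the paper.} The paper never leaves the dynamical plane. It splits $\clo\tilde U\sm\tilde T$ into the $p$ closed pockets $Y_i$ cut off by the sides of $\tilde T$. It pulls these back to $\pmb{Y}_i\subset\ovl{\cV^0}$ by $(\mathfrak h|_{\ovl{\cV^0}})^{-1}$, and takes $\pmb{\cW}_i$ to be the interior of the closure of $\Int\pmb{Y}_i\sqcup\eta(\Int\pmb{Y}_{i'})$, where $g$ carries side $i$ of $\tilde T$ to side $i'$. Each component is then visibly two half-disks glued along one arc. The topological assertions are read off from this picture and the $\eta$-invariance of the vertex preimages $\{x_1,\dots,x_p\}$. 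The degree-$n$ claim is proved separately, in Lemma~\ref{h_branched_cover_lem}, from the degrees of $F_G$ on the pockets (via \cite[Proposition~2.7]{MM}).

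Your third paragraph is essentially this same argument: injectivity of $\mathfrak h$ on each copy away from the punctures, plus crosswise gluing at each vertex. So items (2)--(4) go as in the paper. What your lift adds is a direct proof of the normal form $\mathfrak h\simeq z^n$ on each component, which the paper only reaches in Lemma~\ref{deck_tau_lem}.

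\textbf{Correcting the lifted model.} The second copy of $D_i$ is glued to the first copy of $D_j$ along $s_j=h(s_i)(s_i)$. It is not glued to $D_i$ along $s_i$; the two descriptions agree only for self-paired sides. Realizing the second copy via $y\mapsto h(s_i)(y)$ places it on the closed half-disk bounded by $C(s_j)$ that contains $\Pi$. Hence each upstairs component is realized as all of $\D=D_j\cup h(s_i)(D_i)$, not as a tree of $\Pi$-adjacent tiles.

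In this coordinate the lifted map on the second part is $A^{\mathrm{BS}}_\Gamma\circ h(s_i)^{-1}=\Id$. So $\mathfrak h$ is exactly $q_\omega(z)=z^n$ on the whole component, with no side pairing left over. Since $M_\omega$ permutes the $np$ upstairs components freely, each of the $p$ components of $\pmb{\cW}$ is a copy of $(\D,z\mapsto z^n)$. In this copy $\mathfrak h^{-1}(\tilde T)$ corresponds to $\Int\Pi$, and for $n>1$ the unique critical point sits at $0$.

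This description is uniform in $\delta_2$ and $n$, so the case analysis you anticipated is unnecessary. Also, the components are naturally indexed by the sides of $T$, not its vertices. The two punctures on $\partial\pmb{\cW}_i$ are the endpoints of side $i$, and this is what gives ``neighbouring'' its meaning in items (2)--(3).

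\textbf{A caveat on the statement.} This applies equally to the paper's proof. If $p=2$, for example a sphere with two punctures and one cone point of order $\nu\ge3$, both vertices of $\tilde T$ lie on both pockets. Your vertex-by-vertex analysis then shows that $\ovl{\pmb{\cW}_0}\cap\ovl{\pmb{\cW}_1}$ consists of two points. So item (2) holds as stated only for $p\ge3$.
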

\begin{proof}
It follows from the construction of the conformal combination $g$ that $\clo\tilde U\,\setminus\, \tilde T$ is a connected set. In fact, it is the union of $p$ closed topological disks $Y_0,\cdots,Y_{p-1}$ (except in the case $p=1$, when $Y_0=\clo\tilde U\,\setminus\, \tilde T$ is the closure of a simply connected domain such that there is a unique boundary point with two accesses from the domain and all other boundary points have a unique access from the domain)  such that, after possibly renumbering, we have the following properties:
\begin{itemize}[leftmargin=8mm]
\item $\partial Y_i\cap\partial \tilde T$ is a non-singular real-analytic curve that is a Jordan curve for $p=1$ and an arc for $p\neq 1$,
\item $Y_i\cap Y_{i+1}$ is a single point on $\partial\tilde U$, and
\item $Y_i\cap Y_j=\emptyset$ if $\vert j-i\vert\neq1$, where $i,j\in\Z/p\Z$.
\end{itemize}
The inverse branch $(\mathfrak{h}\vert_{\overline{\cV^0}})^{-1}$ maps each $Y_i$ to a closed topological disk $\pmb{Y}_i\subset \overline{\cV^0}$ (except for $p=1$, when $\pmb{Y}_0$ is the closure of a simply connected domain such that there is a unique boundary point with two accesses from the domain and all other boundary points have a unique access from the domain). By construction, the boundary $\partial\cV^0$ is contained in the boundary of $\left(\mathfrak{h}\vert_{\overline{\cV^0}}\right)^{-1}(\clo \tilde U\,\sm\,\tilde T)=\bigcup_{i=1}^p \pmb{Y}_i$. More precisely, $\bigcup_{i=1}^p \pmb{Y}_i$ is a connected set  containing a relative neighborhood of $\partial\cV^0\setminus\{x_1,\cdots,x_p\}$ in $\overline{\cV^0}$, where $\{x_1,\cdots,x_p\}\subset\partial\cV^0$ is $\eta-$invariant.
By definition of $\pmb{\cW}$, we have that
\begin{equation*}
\clo \pmb{\mathcal{W}} = \Bigg(\big(\mathfrak{h}\big|_{\overline{\cV^0}}\big)^{-1}(\clo\tilde U\,\sm\tilde T)\Bigg)\bigcup\Bigg(\eta\Big(\big(\mathfrak{h}\big|_{\overline{\cV^0}}\big)^{-1}(\clo\tilde U\,\sm\tilde T)\Big)\Bigg)
= \Big(\bigcup_{i=1}^p \pmb{Y}_i\Big)\bigcup\Bigg(\eta\Big(\bigcup_{i=1}^p \pmb{Y}_i\Big)\Bigg).
\end{equation*}
For $i\in\{0,\cdots,p-1\}$, we define $\pmb{\cW}_i$ to be 
$$
\pmb{\cW}_i:=\Int{\Bigg(\overline{\Int{\pmb{Y}_i}\bigsqcup\eta(\Int{\pmb{Y}_{i'}})}\Bigg)},
$$ 
where $i'\in\{0,\cdots,p-1\}$ is the unique index such that $g(\partial Y_i\cap\partial\tilde T)=\partial Y_{i'}\cap\partial\tilde T$.
The desired topological properties of $\pmb{\cW}_i$ now follows from the above description of $\pmb{\cW}_i$ and the $\eta-$invariance of $\{x_1,\cdots,x_p\}$.
\end{proof}

Having studied the topology of $\pmb{\cW}$, we now analyze the mapping structure of $\mathfrak{h}$ on $\pmb{\cW}$.

\begin{lemma}\label{h_branched_cover_lem}
For each $i\in\{0,\cdots,p-1\}$, the map $\mathfrak{h}:\pmb{\cW}_i\to\tilde U$ is a degree $n$ branched covering with a unique critical point of multiplicity $n-1$. The $\mathfrak{h}-$image of this critical point is the unique critical value of $g:\tilde U\,\sm\tilde T\to\tilde U$.  
\end{lemma}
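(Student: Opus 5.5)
The plan is to read off the mapping structure of $\mathfrak{h}$ on $\pmb{\cW}_i$ from its two halves, using the description
$$\pmb{\cW}_i=\Int\Big(\overline{\Int\pmb{Y}_i\sqcup\eta(\Int\pmb{Y}_{i'})}\Big)$$
from the proof of Lemma~\ref{lifted_tiling_top_lem}. We then compare the result with the factor Bowen--Series map through the semiconjugacy $q_\omega$.

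\emph{Step 1: the two halves.} On $\pmb{Y}_i\subset\overline{\cV^0}$, the map $\mathfrak{h}$ is a homeomorphism onto $Y_i$ by Theorem~\ref{Thm:Analytic}(4), and it is conformal on the interior. On $\eta(\pmb{Y}_{i'})\subset\overline{\cV^\infty}$, Theorem~\ref{Thm:Analytic}(5)--(6) and the diagram \eqref{anal_char_cd} give
$$\mathfrak{h}\circ\eta=g\circ\mathfrak{h}\quad\text{on }\pmb{Y}_{i'}.$$
Hence $\mathfrak{h}|_{\eta(\pmb{Y}_{i'})}$ is conjugate to $g|_{Y_{i'}}\colon Y_{i'}\to\clo\tilde U$.

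By Theorem~\ref{Thm:mating}\eqref{It:Group}, $g$ on $\clo\tilde U\sm\tilde T$ is conformally conjugate to $F_G$ on $\ovl\disk\sm T$. The piece $Y_{i'}$ corresponds to one of the $p$ components of $\ovl\disk\sm T_G$. Using $F_G\circ q_\omega=q_\omega\circ A^{\mathrm{BS}}_\Gamma$ (Theorem~\ref{Thm:FBSProperties}(5)), this component is the $q_\omega$-image of $n$ rotated copies of a Bowen--Series domain $D_j$, each attached to a side of $\Pi$. The Bowen--Series branch on $D_j$ is a single M\"obius map $h(s_j)$. It sends $D_j$ onto the closure of $\ovl\disk$ minus the half-plane cut off by the paired side. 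That half-plane contains $\Pi$ minus its rotated copies.

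\emph{Step 2: degree count.} Combining the two halves, we get that $\mathfrak{h}\colon\pmb{\cW}_i\to\tilde U$ is proper. The boundary $\partial\pmb{\cW}_i$ maps into $\partial\tilde U$ (the welding seam $\partial\cV^0$ maps into $\partial\tilde T$, which lies in the interior), so properness is clear away from the pinch points $x_k$. To find the degree, I count preimages of a generic point $y\in\tilde T$. Such a $y$ has no preimage in $\pmb{Y}_i$, since $\mathfrak{h}(\pmb{Y}_i)=Y_i$ is disjoint from $\tilde T$. Its preimages in $\eta(\pmb{Y}_{i'})$ are the $g$-preimages of $y$ in $Y_{i'}$. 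In the $F_G$ model, these lift under $q_\omega$ to points of $\Pi$ pulled back by the $n$ rotated Bowen--Series branches. After quotienting, exactly $n$ of them land in the sector of $Y_{i'}$. I then check that a generic point of $Y_i$ also has exactly $n$ preimages: one in $\pmb{Y}_i$, and $n-1$ in $\eta(\pmb{Y}_{i'})$, coming from $g^{-1}(Y_i)\cap Y_{i'}$. This gives degree $n$.

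\emph{Step 3: critical points.} All critical points lie in the $\eta(\pmb{Y}_{i'})$ half, because $\mathfrak{h}$ is univalent on $\pmb{Y}_i$ and a local homeomorphism across the seam. Through the conjugacy, they are the critical points of $F_G$ in that sector. By Theorem~\ref{Thm:FBSProperties}(6), $F_G$ has exactly $p$ critical points, each of multiplicity $n-1$, all mapping to $q_\omega(0)=0$. One of them lies in each of the $p$ components, namely $q_\omega(\tilde c)$ with $\tilde c=h^{-1}(0)$ for the appropriate branch. Therefore $\pmb{\cW}_i$ contains exactly one critical point, of multiplicity $n-1$. Its $\mathfrak{h}$-image is the image of $0$ under the conjugacy, which is the unique critical value of $g|_{\tilde U\sm\tilde T}$. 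Riemann--Hurwitz is consistent with this: for a simply connected domain with $\chi=1$, a single critical point of multiplicity $n-1$ gives $1=n\cdot1-(n-1)$. For $n=1$ the statement degenerates to a conformal isomorphism.

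The main obstacle is the bookkeeping in Step 2. One has to verify, via the symmetric polygon and $q_\omega$, that precisely $n-1$ (respectively $n$) preimages land in the correct sector $Y_{i'}$, and that the index pairing $i\mapsto i'$ is compatible with which critical point lies in which component. I would carry this out upstairs on $\Pi$ and the $\langle M_\omega\rangle$-orbits, where everything is M\"obius.
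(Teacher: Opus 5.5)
Your proof is correct and is essentially the paper's proof. Both split $\pmb{\cW}_i$ into $\Int\pmb{Y}_i$, where $\mathfrak{h}$ is univalent onto $\Int Y_i$, and $\eta(\Int\pmb{Y}_{i'})$, where $\mathfrak{h}=g\circ\mathfrak{h}\circ\eta$, then use that $g|_{\Int Y_{i'}}$ covers $\tilde U\sm Y_i$ with degree $n$ and $\Int Y_i$ with degree $n-1$, and that each of the $p$ pieces $Y_k$ carries exactly one critical point of $F_G$. The only difference is that you derive these facts directly from $F_G\circ q_\omega=q_\omega\circ A^{\mathrm{BS}}_\Gamma$ and the branches $h(s_j)$, whereas the paper simply cites \cite[Proposition~2.7]{MM}.

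Two small slips to fix. First, it is the image $h(s_j)(D_j)=\ovl\disk\sm\Int D_{j'}$, not the half-plane $D_{j'}$ cut off by the paired side, that contains $\Pi$ (and every $D_k$ with $k\neq j'$). Second, ``conjugate'' in Step~1 should read ``equal to $g|_{Y_{i'}}$ precomposed with the homeomorphism $\mathfrak{h}\circ\eta$''.
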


\begin{proof}
By the proof of Lemma~\ref{lifted_tiling_top_lem}, we have that $\pmb{\cW}_i:=\Int{\Bigg(\overline{\Int{\pmb{Y}_i}\bigsqcup\eta(\Int{\pmb{Y}_{i'}})}\Bigg)}$, where $i'\in\{0,\cdots,p-1\}$ is the unique index such that $g(\partial Y_i\cap\partial\tilde T)=\partial Y_{i'}\cap\partial\tilde T$. By definition, $\mathfrak{h}$ carries $\Int{\pmb{Y}_i}$ univalently onto $\Int{Y_i}$, and maps the common boundary of $\Int{\pmb{Y}_i},\eta\big(\Int{\pmb{Y}_{i'}}\big)$ onto $\partial Y_i\cap\partial\tilde T$. On the other hand, $\mathfrak{h}$ acts as $g\circ\mathfrak{h}\vert_{\cV^0}\circ\eta$ on $\eta\big(\Int{(\pmb{Y}_{i'}})\big)$. Therefore, $\mathfrak{h}\big(\eta(\Int{\pmb{Y}_{i'}})\big)=g(\Int{Y_{i'}})$. As $g(\partial Y_i\cap\partial\tilde T)=\partial Y_{i'}\cap\partial\tilde T$ and $g \colon \tilde U\,\sm\tilde T \to \tilde U$ is conformally conjugate to the factor Bowen--Series map $F_G \colon\D\,\sm T_G\to\D$, it follows from the mapping properties of $F_G$ (cf. \cite[Proposition~2.7]{MM}) that $g\vert_{\Int{Y_{i'}}}$ is a degree $n$ branched covering over $\tilde U\,\sm Y_i$ and a degree $n-1$ branched covering over $\Big(\Int{Y_i}\bigcup\big(\partial Y_{i}\cap\partial\tilde T\big)\Big)$.  This proves that $\mathfrak{h}:\pmb{\cW}_i\to\tilde U$ is a degree $n$ branched covering. The unicriticality of this branched covering also follows from the above discussion and \cite[Proposition~2.7]{MM}. The last statement is a consequence of the fact that all critical points of $F_G$ are mapped to the same critical value.
\end{proof}

\begin{rem}\label{marking_rem}
If $n > 1$, for each $i$ we denote the unique critical point of $\mathfrak{h}\vert_{\pmb{\cW}_i}$ from the lemma above as $\pmb{p}_i \in \pmb{\mathcal W}_i$.

If $n=1$, then $\mathfrak{h}:\pmb{\cW}_i\to\tilde U$ is a conformal isomorphism; in 
particular, it has no critical points. In this case, we choose any point in $\tilde U$ (for definiteness, the point that corresponds to the origin under the conformal conjugacy between $g:\tilde U\,\sm\tilde T\to\tilde U$ and $F_G:\D\,\sm T\to \D$), and refer to its preimage under $\mathfrak{h}\vert_{\pmb{\cW}_i}$ as the marked point of $\pmb{\cW}_i$ and denote it by $\pmb{p}_i$.
\end{rem}

This is a good moment to point out that, unlike in the dynamical plane of the conformal combination $g$, no point is expelled in the iteration of the correspondence $\mathfrak{C}$. This is particularly relevant for the set $\pmb{\cW}$, where the dynamics of $\mathfrak{C}$ will mimic the full structure of the group $G$. 
Specifically, the open set $\pmb{\cW}$ is $\eta-$invariant.
Further, by Lemma~\ref{h_branched_cover_lem}, the map $\mathfrak{h} \colon \pmb{\cW}\to\tilde U$ is a degree $np$ branched covering. Since the branches of the correspondence $\mathfrak{C}$ are given by compositions of $\eta$ and the local deck transformations of $\mathfrak{h}$, we will now proceed to study the deck transformations of the branched covering $\mathfrak{h} \colon\pmb{\cW}\to\tilde U$ (these deck transformations preserve $\pmb{\cW}$ by definition) allowing us to completely describe the branches of $\mathfrak{C}$ preserving $\pmb{\cW}$.

\begin{lemma}\label{deck_tau_lem}
    There exists a conformal automorphism $\tau\colon\pmb{\cW}\to\pmb{\cW}$ such that
    $$
    \tau^{np}\equiv \text{id}, \hspace{2mm} and \hspace{2mm}\Big(\mathfrak{h}\big|_{\pmb{\cW}}\Big)^{-1}\bigg(\Big(\mathfrak{h}\big|_{\pmb{\cW}}\Big)(z)\bigg) = \{z, \tau(z), \cdots, \tau^{np-1}(z)\} \hspace{2mm}\forall z\in \pmb{\cW}. 
    $$
Hence, the conformal automorphisms $\tau\circ\eta,\cdots,\tau^{np-1}\circ\eta \colon \pmb{\cW}\to\,\pmb{\cW}$ define the forward branches of $\mathfrak{C}$ preserving $\pmb{\cW}$.
\end{lemma}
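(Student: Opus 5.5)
The plan is to build $\tau$ by lifting a rotation through an explicit uniformization of the branched covering $\mathfrak{h}\colon\pmb{\cW}\to\tilde U$. That covering should be conjugate to $q_\omega$ composed with the Bowen--Series unfolding. Concretely, let $\Phi\colon \tilde U\to\D$ be the conformal conjugacy from Theorem~\ref{Thm:mating}\eqref{It:Group} between $g|_{\clo\tilde U\sm\tilde T}$ and $F_G$. We expect $\pmb{\cW}$, which is two copies of $\tilde U\sm\tilde T$ welded along $\partial\tilde T$ by $g$, to be conformally modelled on the following set:
\[
\Omega:=\Big(q_\omega^{-1}(\D\sm\overline T)\cup A^{\mathrm{BS}}_\Gamma\big(q_\omega^{-1}(\D\sm \overline T)\big)\Big)^{\circ},
\]
taken together with the copy of $\Pi$ in between. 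Equivalently, it is the union of $\Pi$ and its images under the side pairings. The model map $\Omega\to\D$ is given by $q_\omega$ on the first copy and by $q_\omega\circ (A^{\mathrm{BS}}_\Gamma)^{-1}$ on the second.

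A cleaner route is to avoid explicit models and argue via covering theory. First, I would use Lemma~\ref{lifted_tiling_top_lem} and Lemma~\ref{h_branched_cover_lem}. These show that $\mathfrak{h}\colon\pmb{\cW}\to\tilde U$ is a degree $np$ branched covering, whose only branch points are the $p$ points $\pmb{p}_i$, each of local degree $n$, all lying over the single critical value $v:=\Phi^{-1}(0)$ (when $n=1$ there are none). Second, I would pass to the closure and work with $\clo\pmb{\cW}$. By Lemma~\ref{lifted_tiling_top_lem}, $\clo\pmb{\cW}$ is a cyclic chain of $p$ closed disks $\overline{\pmb{\cW}_i}$ that touch consecutively at single points. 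These touching points are the $x_j$, and they lie over the cusps $\Ss$, i.e. over $\partial\tilde U$. I would then adjoin these points to $\pmb{\cW}$ and view the result as a branched cover of $\tilde U$ together with the cusp points. Around a small loop encircling one cusp point, the $p$ sheets are permuted cyclically, because the disks are chained cyclically. Around $v$, each sheet $\pmb{\cW}_i$ is permuted cyclically with order $n$. Third, I would show that the monodromy group of the unbranched part of the covering is cyclic of order $np$, i.e. that the covering is Galois with cyclic deck group. The cleanest way is to compare with the model $z\mapsto z^{np}$: the covering is determined up to isomorphism by its monodromy, and that monodromy can be read off from the factor Bowen--Series structure. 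This is exactly the semiconjugacy $F_G\circ q_\omega=q_\omega\circ A^{\mathrm{BS}}_\Gamma$, where $q_\omega(z)=z^n$ and the $p$ sides in a sector are cyclically arranged. Once the monodromy is abelian and transitive, the deck group $\mathrm{Deck}\cong\Z/np\Z$ acts simply transitively on generic fibers. Its generator $\tau$ extends holomorphically over the branch points by the removable singularity theorem.

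The fiber identity $\mathfrak{h}^{-1}(\mathfrak{h}(z))\cap\pmb{\cW}=\{\tau^k(z)\}$ and the relation $\tau^{np}=\mathrm{id}$ then follow. The final claim is formal. By \eqref{forward_corr_eqn}, the forward images of $z\in\pmb{\cW}$ lying in $\pmb{\cW}$ are the points of $\mathfrak{h}^{-1}(\mathfrak{h}(\eta z))\cap\pmb{\cW}$ other than $\eta(z)$. Since $\eta(\pmb{\cW})=\pmb{\cW}$, these are exactly $\tau^k\eta(z)$ for $k=1,\dots,np-1$.

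The main obstacle is the third step: proving that the covering is regular and cyclic rather than merely of degree $np$. The monodromies at $v$ and at the cusps must combine into a single $np$-cycle. Here I would first try the explicit model. The Bowen--Series unfolding of $\Pi$ across its $np$ sides, quotiented by the unfolding map, is uniformized by $\zeta\mapsto\zeta^{np}$ after a conformal change of coordinates. The rotation $M_\omega$ provides the order-$n$ part, and the cyclic shift of the $p$ sides within a sector composes with it to give a rotation of order $np$.
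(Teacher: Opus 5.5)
Your final step, reading off the forward branches from \eqref{forward_corr_eqn} using $\eta(\pmb{\cW})=\pmb{\cW}$, is fine. For $p=1$ your covering argument also works. For $p>1$, however, your construction of $\tau$ rests on a claim that is false.

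Since $\tilde U$ is a disk and the only branch value is $v$, the monodromy of $\mathfrak{h}\colon\pmb{\cW}\to\tilde U$ is generated by a single loop around $v$. That loop acts as a product of $p$ disjoint $n$-cycles. So the monodromy group is $\Z/n\Z$ with $p$ orbits, not a transitive cyclic group of order $np$.

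The cusp points cannot supply extra monodromy. They lie on $\partial\tilde U$, so there is no loop around them in the base. In any case, only two of the closed disks meet at each touching point, so nothing there could produce a $p$-cycle.

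The comparison with $\zeta\mapsto\zeta^{np}$ fails too. A connected degree-$np$ cover of a disk branched over one point has a single ramification point of local degree $np$. This contradicts your own (correct) count of $p$ ramification points of local degree $n$.

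The cover $\pmb{\cW}\to\tilde U$ is disconnected, so ``Galois with cyclic deck group'' is the wrong framework. Its full deck group is $(\Z/n\Z)\wr S_p$. What the lemma needs is a cyclic subgroup of order $np$ acting simply transitively on each fibre. Such a subgroup has to be chosen; it cannot be derived from monodromy.

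The missing idea is to use the disconnectedness rather than fight it: uniformize each component separately, then define $\tau$ by hand.

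\begin{itemize}
\item Each map $\mathfrak{h}\colon\pmb{\cW}_j\to\tilde U$ is proper of degree $n$ with a unique critical point $\pmb{p}_j$ over the common critical value $\mathfrak{X}(0)$. When $n=1$ it is a conformal isomorphism with marked point $\pmb{p}_j$.
\item Take Riemann maps $\pmb{\Phi}_j\colon(\D,0)\to(\pmb{\cW}_j,\pmb{p}_j)$. Then $\mathfrak{X}^{-1}\circ\mathfrak{h}\circ\pmb{\Phi}_j$ is a degree-$n$ Blaschke product with an $n$-fold zero at $0$. After adjusting $\pmb{\Phi}_j$ by a rotation, it equals $w\mapsto w^n$ for every $j$.
\item In these coordinates set $\tau(w,j)=(w,j+1)$ for $j\le p-2$ and $\tau(w,p-1)=(e^{2\pi i/n}w,0)$.
\item Then $\tau^{np}=\mathrm{id}$, and the $\tau$-orbits are exactly the fibres of $\mathfrak{h}|_{\pmb{\cW}}$.
\end{itemize}

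This is the paper's proof. Your closing heuristic (cyclically shift $p$ pieces, then rotate by $M_\omega$ on return) is this map. The difference is that the shift moves between different components $\pmb{\cW}_j$; it is not a rotation of a single disk uniformized by $\zeta^{np}$.
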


\begin{proof}
    The proof is similar to that of \cite[Proposition~5.4]{MM}. However, as the current setting is different from the one considered in \cite{MM}, we supply a proof. 
    
    Let $\displaystyle\pmb{\Phi}: \mathbb{D}\times\Z/p\Z\rightarrow \bigsqcup_{j\in\Z/p\Z}\pmb{\cW}_i$ be a conformal isomorphism such that for each $j\in \Z/p\Z$, we have $\pmb{\Phi}(\D\times\{j\})=\pmb{\cW}_j$, and $\pmb{\Phi}(0, j)$ is the marked point $\pmb{p}_j\in\pmb{\cW}_j$ (see Remark~\ref{marking_rem}). Further, let $\mathfrak{X}:\D\to\tilde U$ be the conformal isomorphism that conjugates $F_G\colon\D\,\sm T_G\to \D$ to $g\colon\tilde U\,\sm\tilde T\to\tilde U$.
     Then, after possibly pre-composing $\pmb{\Phi}$ with a rotation on each $\mathbb{D}\times\{j\}$, the map $\widetilde{\mathfrak{h}}:= \mathfrak{X}^{-1}\circ \mathfrak{h}\circ \pmb{\Phi}:\mathbb{D}\times\Z/p\Z\rightarrow \D$ takes the form
\begin{equation*}
    (w, j) \mapsto w^{n}, \hspace{2mm}w\in \mathbb{D}, \hspace{1mm}j\in \Z/p\Z.
\end{equation*}
It is readily seen that the map 
\begin{equation*}
\begin{split}
\quad \tilde \tau \colon \mathbb{D} \times \Z/p\Z &\rightarrow \mathbb{D} \times \Z/p\Z \\
\tilde \tau(w, j) &:= 
\begin{cases*}
    (w, j+1), & for $j \in \{0, \dots, p-2\}$, \\
    (e^{\frac{2i\pi}{n}}w, 0), & for $j = p-1$,
\end{cases*}
\end{split}
\end{equation*}
is an order $np$ conformal automorphism satisfying the condition
 $$
\widetilde{\mathfrak{h}}^{-1}(\widetilde{\mathfrak{h}}(w, j)) = \{(w, j), \widetilde{\tau}(w, j), \cdots, \widetilde{\tau}^{np-1}(w, j)\},\quad \forall\ (w, j) \in \mathbb{D}\times\Z/p\Z.
$$
The required automorphism $\tau$ is given by $\pmb{\Phi}\circ\tilde \tau\circ\pmb{\Phi}^{-1}$. The last statement follows from the above discussion and~\eqref{forward_corr_eqn}.
\end{proof}

\begin{cor}\label{corr_group_action_cor}
The branches of $\mathfrak{C}$ preserving $\pmb{\cW}$ generate the group $\langle\eta, \tau\rangle$ of conformal automorphisms of $\pmb{\cW}$.    
\end{cor}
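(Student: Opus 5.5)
The plan is to identify exactly which branches of $\mathfrak{C}$ map $\pmb{\cW}$ into itself, and then show that the group they generate equals $\langle\eta,\tau\rangle$. By \eqref{forward_corr_eqn}, a forward branch of $\mathfrak{C}$ at a point $z$ is a local map of the form $z\mapsto \sigma(\eta(z))$, where $\sigma$ is a local deck transformation of $\mathfrak{h}$ near $\eta(z)$ different from the identity. If $z\in\pmb{\cW}$, then $\eta(z)\in\pmb{\cW}$, since $\pmb{\cW}$ is $\eta$-invariant by \eqref{tiling_comp_eq}. A branch preserving $\pmb{\cW}$ sends $\eta(z)$ to a point $w\in\pmb{\cW}$ with $\mathfrak{h}(w)=\mathfrak{h}(\eta(z))$. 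By Lemma~\ref{deck_tau_lem}, the fibre of $\mathfrak{h}|_{\pmb{\cW}}$ through $\eta(z)$ is exactly the $\tau$-orbit $\{\eta(z),\tau(\eta(z)),\dots,\tau^{np-1}(\eta(z))\}$. Hence $w=\tau^k(\eta(z))$ for some $1\le k\le np-1$.

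The next step is to pass from this pointwise statement to a statement about branches. For this I would use that $\tau$ is a global conformal automorphism of $\pmb{\cW}$ and that local branches are determined by analytic continuation. Any forward branch preserving $\pmb{\cW}$ is holomorphic on a connected open subset of $\pmb{\cW}$ and agrees pointwise with one of the finitely many maps $\tau^k\circ\eta$. By the identity theorem, it then coincides with a single $\tau^k\circ\eta$ on that subset. Backward branches are inverses of forward branches, so they are the maps $(\tau^k\circ\eta)^{-1}=\eta\circ\tau^{-k}$. Consequently, the group generated by all branches preserving $\pmb{\cW}$ is generated by $\{\tau^k\circ\eta : 1\le k\le np-1\}$, and it is contained in $\langle\eta,\tau\rangle$.

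For the reverse inclusion, I would write $\tau\circ\eta$ and $\tau^2\circ\eta$ in terms of the generated group. When $np\ge 3$, both are branches, and $(\tau^2\circ\eta)\circ(\tau\circ\eta)^{-1}=\tau^2\circ\eta\circ\eta\circ\tau^{-1}=\tau$. From $\tau$ we then recover $\eta=\tau^{-1}\circ(\tau\circ\eta)$. The hypothesis $np\ge3$ holds because $d=np-1\ge2$. Therefore $\langle\eta,\tau\rangle$ is exactly the group generated by the branches, and the corollary follows.

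The main obstacle I expect is the identity-theorem step. In principle, a local branch could switch between different $\tau^k\circ\eta$ near the critical points $\pmb{p}_i$, where fibres collide, or near points where $\mathfrak{h}$ has other preimages outside $\pmb{\cW}$. I would handle this by working on $\pmb{\cW}$ minus the discrete set $\eta^{-1}(\{\pmb{p}_i\})$. On that set each $\tau^k\circ\eta$ gives pairwise distinct values, so the index $k$ is locally constant. Branches that send points into $\mathfrak{h}^{-1}(\tilde U)\setminus\pmb{\cW}$ do not preserve $\pmb{\cW}$ and are excluded by definition. The discrete exceptional set is then recovered by continuity.
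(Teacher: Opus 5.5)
Your proposal is correct and follows the paper's route. Lemma~\ref{deck_tau_lem} identifies the branches preserving $\pmb{\cW}$ with the maps $\tau^k\circ\eta$, $1\le k\le np-1$, and the relation $\tau=(\tau^2\circ\eta)\circ(\tau\circ\eta)^{-1}$ then recovers $\tau$ and hence $\eta$; your identity-theorem argument and the check that $np=d+1\ge 3$ (needed for $\tau^2\circ\eta$ to be a genuine branch) make explicit what the paper leaves implicit.
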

\begin{proof}
    According to Lemma~\ref{deck_tau_lem}, the forward branches of $\mathfrak{C}$ preserving $\pmb{\cW}$ extend as the conformal automorphisms $\tau\circ\eta, \cdots, \tau^{np-1}\circ\eta:\pmb{\cW}\to\pmb{\cW}$. Now, the relation $\tau = (\tau^2\circ\eta)\circ(\tau\circ\eta)^{-1}$ implies that both $\tau,\eta$ can be written as compositions of $\left(\tau^j\circ\eta\right)^{\pm 1}$, $j\in\{1,\cdots,np-1\}$. Hence, the subgroup of $\Aut_{\mathrm{conf}}(\pmb{\cW})$ generated by $\tau\circ\eta, \cdots, \tau^{np-1}\circ\eta$ equals the one generated by $\eta, \tau$.
    The corollary follows.
\end{proof}

\begin{prop}\label{orbifold_isom_prop}
The following properties hold.
\noindent\begin{itemize}[leftmargin=8mm]
    \item The subgroup $\pmb{G}_0\leq\pmb{G}_{\pmb{\cW}}$ preserving the component $\pmb{\cW}_0$ of $\pmb{\cW}$ is conformally conjugate to the action of $G\vert_{\D}$.

    \item The group $\langle\eta, \tau\rangle$ acts properly discontinuously on $\pmb{\cW}$.

    \item The quotient orbifold $\pmb{\cW}/\langle\eta, \tau\rangle$ is biholomorphic to $\mathbb{D}/G$.
\end{itemize}
\end{prop}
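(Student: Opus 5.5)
The plan is to reduce everything to the known structure of the factor Bowen--Series map via the uniformization $\pmb{\Phi}$ and the normal form $\widetilde{\mathfrak h}(w,j)=w^n$ from the proof of Lemma~\ref{deck_tau_lem}, following the strategy of \cite[Proposition~5.4, Theorem~5.5]{MM}. On the model space $\D\times\Z/p\Z$, $\tilde\tau$ is explicit, so the only remaining task is to identify $\tilde\eta:=\pmb{\Phi}^{-1}\circ\eta\circ\pmb{\Phi}$. For this I will use the commutative diagram~\eqref{anal_char_cd}: on $\pmb{Y}_i=(\mathfrak h|_{\overline{\cV^0}})^{-1}(Y_i)$ we have $\mathfrak h\circ\eta=g\circ\mathfrak h$, and $g|_{\tilde U\sm\tilde T}$ is conformally conjugate by $\mathfrak X$ to $F_G$. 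Lifting this relation through $w\mapsto w^n$, $\tilde\eta$ restricted to the lift of $Y_i$ becomes a lift of a piece of $A^{\mathrm{BS}}_\Gamma$, that is, one of the side pairings $M_\omega^j h_\ell M_\omega^{-j}$, composed with an appropriate rotation. This uses the semiconjugacy $F_G\circ q_\omega=q_\omega\circ A^{\mathrm{BS}}_\Gamma$ of Theorem~\ref{Thm:FBSProperties}(5).

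\emph{First bullet.} I will build a conformal map $\Theta\colon\D\to\pmb{\cW}_0$ by assembling the pieces as follows. For $p=1$, lifting $\mathfrak X^{-1}\circ\mathfrak h$ on $\pmb{\cW}_0$ by $q_\omega$ identifies $\pmb{\cW}_0$ with $\D$ itself. Under this identification, $\pmb Y_0$ corresponds to $\D\sm\Pi$ modulo rotation, $\eta(\pmb Y_0)$ corresponds to a neighbouring copy, and $\Int\Pi$ corresponds to the preimage of $\tilde T$. The stabilizer $\pmb G_0$ is generated by $\tau$ and $\tau^{j}\circ\eta$. In the model, $\tilde\tau$ is the rotation $M_\omega$ and $\tilde\eta$ is the side pairing of $\Pi$. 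These generate $G=\langle\Gamma,M_\omega\rangle$, because the side pairings together with their $M_\omega$-conjugates generate $\Gamma$. For $p>1$, the elements of $\pmb G_0$ are the words in $\tau,\eta$ that return to $\pmb{\cW}_0$, such as $\tau^{p}$ and $\tau^{-k}\eta\tau^{k'}$. I will check that these correspond to $M_\omega$ and to the $p$ side pairings of one sector. The conjugacy is then $\Theta$, and injectivity of the induced homomorphism $\pmb G_0\to G$ is clear because both groups act faithfully.

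\emph{Second and third bullets.} Since $\langle\eta,\tau\rangle$ permutes the components $\pmb{\cW}_i$ transitively (via $\tau$), any element is a composition of a component permutation with an element conjugate into $\pmb G_0$. Proper discontinuity on $\pmb{\cW}$ therefore follows from proper discontinuity of $G$ on $\D$ together with the finiteness of the number of components. For the quotient, $\pmb{\cW}/\langle\eta,\tau\rangle\cong\pmb{\cW}_0/\pmb G_0\cong\D/G$, using the fact that the stabilizer of a component in a group acting transitively on components gives the same quotient.

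\emph{Main obstacle.} The hard step will be the precise identification of $\tilde\eta$ with the side-pairing elements, including the matching of the rotation indices $i\mapsto i'$ from Lemma~\ref{lifted_tiling_top_lem}. It also requires showing that the analytic continuation of $\eta$ across the welded seams $\partial\cV^0$ agrees globally with a single M\"obius element on each sector. I will handle this first on $\Int\pmb Y_i$, then extend by the identity theorem, using that $\eta$ and the lifted Bowen--Series branch are both conformal on connected sets where they agree on an arc.
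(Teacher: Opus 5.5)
Your proposal follows essentially the same route as the paper: conjugate by $\pmb{\Phi}_0$, match $\tau^p$ with $M_\omega$, match the $\tau$-conjugates of $\eta$ that preserve $\pmb{\cW}_0$ with the side pairings of one sector of $\Pi$, and then pass from $\pmb{G}_0$ to $\langle\eta,\tau\rangle$ using that $\tau$ permutes the components transitively (the paper phrases the matching step as side pairings of $\widehat{\pmb{T}}_0=\pmb{\Phi}_0(\Pi_G)$, after normalizing $\pmb{\Phi}_j$ so that $\partial\cV^0\cap\pmb{\cW}_j$ is a side). One correction to your $p=1$ picture: $\pmb{\Phi}_0^{-1}(\eta(\Int\pmb{Y}_0))$ is not a neighbouring half-plane but all of $\D$ minus the closure of the half-plane $\pmb{\Phi}_0^{-1}(\Int\pmb{Y}_0)$, containing $\Int\Pi$ and the other $n-1$ half-planes, because $\mathfrak{h}$ maps it onto $\tilde U$ with degree $n$ over $\tilde T$; this is exactly why $\pmb{\Phi}_0^{-1}\circ\eta\circ\pmb{\Phi}_0$ is the side pairing of that side.
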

\begin{proof}
The proof is analogous to that of \cite[Proposition~5.7]{MM} or \cite[Proposition~6.8]{MV25}. For completeness, we work out the details using the notation of the present paper.

Let $\pmb{\Phi}$ be as in the proof of Lemma~\ref{deck_tau_lem}. For brevity, we will use the notation $\pmb{\Phi}_j\equiv\pmb{\Phi}\vert_{\D\times\{j\}}:\D\to\D$, $j\in\Z/p\Z$. Then, $\pmb{\cW}_j=\pmb{\Phi}_j(\mathbb{D})$, $j\in\Z/p\Z$. 
Lemma~\ref{deck_tau_lem} also yields the following commutative diagram:
\[
 \begin{tikzcd}
\left(\D,0\right)    \arrow{d}[swap]{w\mapsto w^n} \arrow{r}{\pmb{\Phi}_j} &  \left(\pmb{\cW}_j,\pmb{p}_j\right)
\arrow{d}{\mathfrak{h}} \\
\left(\D,0\right)   \arrow{r}{\mathfrak{X}}  & \left(\tilde U,\mathfrak{h}(\pmb{p}_j)\right)
\end{tikzcd}
\]
where $\pmb{p}_j$ is the marked point in $\pmb{\cW}_j$ (see Remark~\ref{marking_rem} for the naming convention, and see Figure~\ref{corr_group_action_fig} that illustrates the proof). Note also that by the proof of Lemma~\ref{deck_tau_lem}, we have that
\begin{equation}
\pmb{\Phi}_{j+1}=\tau\circ\pmb{\Phi}_j,\quad j\in\{0,\cdots,p-2\}.
\label{phi_relation_eqn}
\end{equation}

Let us consider 
$$
\pmb{G}_0:=\{\kappa\in\langle\eta,\tau\rangle: \kappa(\pmb{\cW}_0)=\pmb{\cW}_0\};
$$
the stabilizer of $\pmb{\cW}_0$ in $\langle\eta,\tau\rangle$. Note that the cyclic group $\langle\tau\rangle$ acts transitively on the components of $\pmb{\cW}$. Thus, to establish the proposition, it is enough to prove that $\pmb{G}_0$ acts properly discontinuously on $\pmb{\cW}_0$ and that $\pmb{\cW}_0/\pmb{G}_0$ is biholomorphic to $\D/G$.

To this end, first observe that $\tau^{p}$ restricts to an order $n$ element of $\pmb{G}_0$ that fixes $\pmb{p}_0$, and has derivative $\omega =e^{2i\pi/n}$ at $\pmb{p}_0$. Hence, $\pmb{\Phi}_0$ conjugates $M_\omega(z)=\omega z$ to $\tau^p$.
In order to extend $\tau^p$ to a generating set of $\pmb{G}_0$, we will construct a fundamental domain and associated side-pairing transformations for the $\pmb{G}_0-$action on $\pmb{\cW}_0$. 

For this, let us
set $\pmb{T}_j:=\mathfrak{h}^{-1}(\tilde T)\cap\pmb{\cW}_j$, $j\in\Z/p\Z$. For $n=1$, the maps in the vertical arrows in the above commutative diagram are conformal. On the other hand, if $n\geq 3$, then the maps in the vertical arrows are degree $n$ branched coverings with unique critical points of multiplicity $n-1$ at $0$, $\pmb{p}_j$, respectively. It now follows from the definition of (factor) Bowen--Series maps that $\pmb{\Phi}_j^{-1}\big(\pmb{T}_j\big)=\Int{\Pi}$ (see Section~\ref{SSec:FactorBowenSeriesMaps}).

Recall that $\Gamma$ is the index $n$ subgroup of $G$ such that $G=\Gamma\rtimes \langle M_\omega\rangle$ (cf.\ Section~\ref{SSec:FactorBowenSeriesMaps}). The polygon $\Pi$ is a closed fundamental domain for $\Gamma$, and
$$
\Pi_G:=\{z\in\Pi:0\leq \arg{z}\leq 2\pi/n\}
$$
is a closed fundamental domain for $G$. Define $\widehat{\pmb{T}}_j:=\pmb{\Phi}_j(\Pi_G)\subset\pmb{T}_j$. By the commutative diagram, $\mathfrak{h}$ is univalent on $\Int{\widehat{\pmb{T}}_j}$, and maps the two geodesics on $\partial \widehat{\pmb{T}}_j$ emanating from $\pmb{p}_0$ to the arc $\mathfrak{X}([0,1))$. Further, after possibly normalizing $\pmb{\Phi}_j$ (specifically, pre-composing it with some iterate of $M_\omega$), we can assume that the hyperbolic geodesic $\partial\cV^0\cap\pmb{\cW}_j$ (of $\pmb{\cW}_j$) is one of the sides of $\partial\widehat{\pmb{T}}_j$.

Note that each component $\pmb{\cW}_j$ of $\pmb{\cW}$ is preserved by some $\tau^r\circ\eta$. The collection of these maps, conjugated by suitable powers of $\tau$, produce elements of $\pmb{G}_0$ that act as side-pairing transformations of the boundary of the $np-$gon $\pmb{T}_0$. Combined with the map $\tau^p$, (a subset of) these maps pair the sides of $\widehat{\pmb{T}}_0$. Finally, Relation~\eqref{phi_relation_eqn} and the above normalization of the conformal maps $\pmb{\Phi}_j$ can be used to see that $\pmb{\Phi}_0^{-1}$ conjugates these side-pairing transformations for $\widehat{\pmb{T}}_0$ to the side-pairing transformations for the fundamental domain $\Pi_G$ of $G$ (i.e., to the marked generators of $G$). In particular, $\widehat{\pmb{T}}_0$ is a fundamental domain for the action of the group $\pmb{G}_0$ on $\pmb{\cW}_0$, and $\pmb{\Phi}_0^{-1}$ conjugates $\pmb{\pmb{G}}_0\vert_{\pmb{\cW}_0}$ to $G\vert_{\D}$. 
It follows that the group $\langle\eta, \tau\rangle$ acts properly discontinuously on $\pmb{\cW}$, and that the conformal map $\pmb{\Phi}_0:\D\to\pmb{\cW}_0$ induces a biholomorphism between the orbifolds $\D/G$ and $\pmb{\cW}_0/\pmb{G}_0$.    
\end{proof}

\subsubsection{An illustration of the construction of Proposition~\ref{orbifold_isom_prop}}

Let $G$ be a Fuchsian group such that $\Sigma\cong\D/G$ is a genus zero (finite area) hyperbolic orbifold with two punctures, an order $2$ orbifold point, and an order $4$ orbifold point. Then, in the notation of Section~\ref{SSec:FactorBowenSeriesMaps}, the cyclic cover $\tilde\Sigma$ of $\Sigma$ is a genus zero (finite area) hyperbolic orbifold with five punctures, and four order $2$ orbifold points. The surface $\tilde\Sigma$ is a fourfold cover of $\Sigma$. Let $\Gamma$ be the index four subgroup of $G$ such that $G=\Gamma\rtimes\langle M_i\rangle$, where $M_i(z)=iz$. In this case, $n=4$ and $p=3$.  Figure~\ref{corr_group_action_fig} (top left) shows the ideal $12-$gon $\Pi$ that is a closed fundamental domain for $\Gamma$. Its subset
$$
\Pi_G:=\{z\in\Pi:0\leq \arg z\leq \pi/2\}
$$
is a closed fundamental domain for $G$. The region $\Pi_G$ is shaded in gray. Note that $\partial\Pi_G$ consists of three bi-infinite geodesics and a pair of geodesic rays emanating from the origin. The map $w\mapsto w^4$ semi-conjugates the Bowen--Series map $A_\Gamma^{\mathrm{BS}}:\overline{\D}\,\sm\Int(\Pi)\to\overline{\D}$ (associated with the fundamental domain $\Pi$ of the group $\Gamma$) to the factor Bowen--Series map $F_G:\overline{\D}\,\sm T_G\to\overline{\D}$, where $T_G$ is the image of $\Int{\Pi}$ under $w\mapsto w^4$ (the set $T$ is shaded in gray in the bottom left picture of Figure~\ref{corr_group_action_fig}).
\begin{figure}[h!]
\captionsetup{width=0.98\linewidth}
\begin{tikzpicture}
\node[anchor=south west,inner sep=0] at (0,0) {\includegraphics[width=1\textwidth]{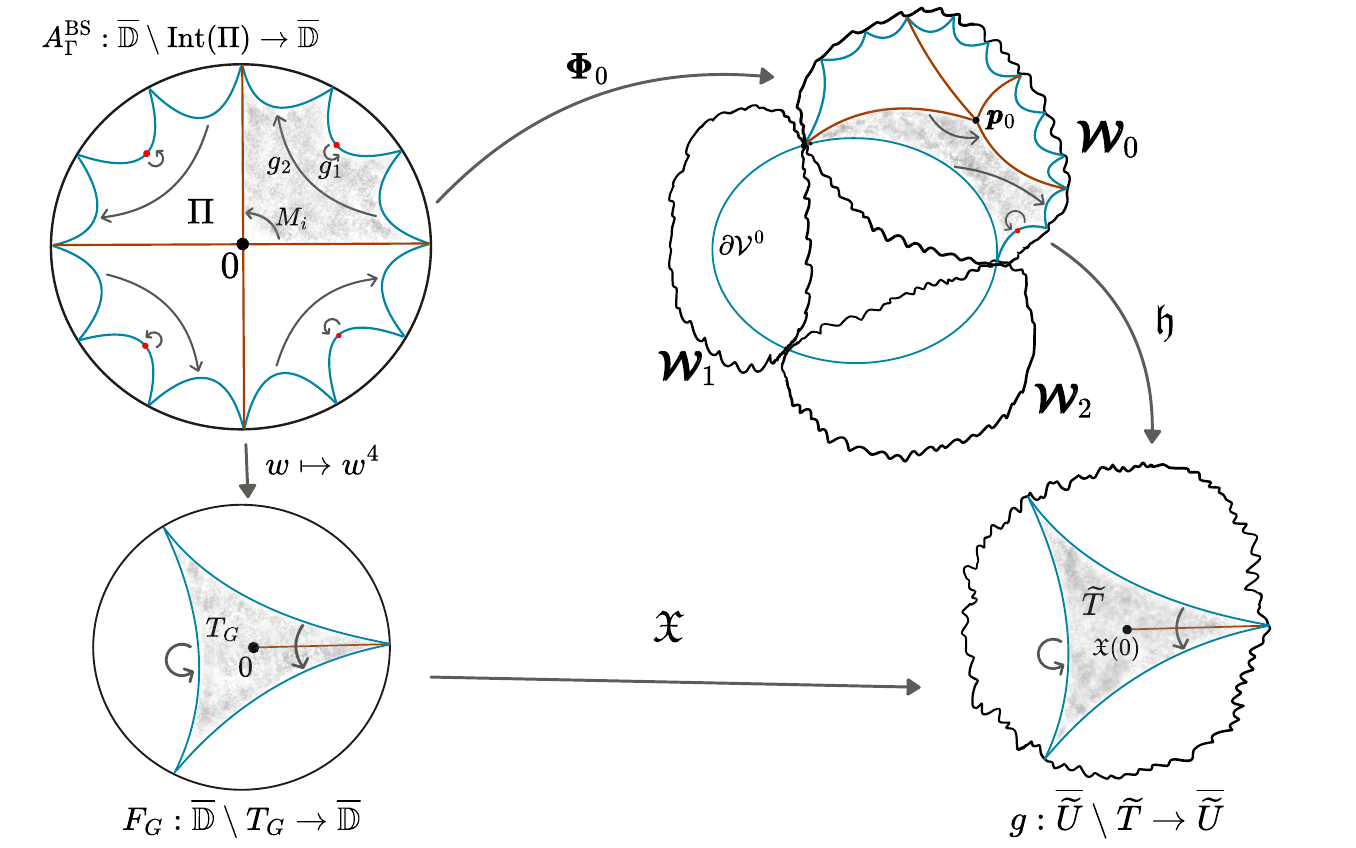}};
\end{tikzpicture}
\caption{Illustrated is the proof of Proposition~\ref{orbifold_isom_prop}.}
\label{corr_group_action_fig}
\end{figure}
In bottom right of Figure~\ref{corr_group_action_fig}, we depict the restriction $g:\clo\tilde U\,\sm\tilde T\to\clo\tilde U$ of the conformal combination $g$. The conformal map $\mathfrak{X}:\overline{\D}\to\clo\tilde U$ conjugates $F_G$ to $g$. Here, $\tilde T=\mathfrak{X}(T)$ is shaded in gray. In top right of Figure~\ref{corr_group_action_fig}, we show a part of the dynamical plane of the correspondence $\mathfrak{C}$. Since $p=3$, we have that $\pmb{\cW}=\pmb{\cW}_0\sqcup\pmb{\cW}_1\sqcup\pmb{\cW}_2$, where each $\pmb{\cW}_j$ maps as a degree $4$ branched cover onto $\clo\tilde U$ under $\mathfrak{h}$. Observe that $\pmb{\cW}$ is obtained by gluing two copies of $\tilde U\,\sm\tilde T$ along $\partial\tilde T$ via the identification map $g\colon\partial\tilde T\to\partial\tilde T$. The cyan oval curve in this picture is the Jordan curve $\partial\cV^0=\partial\cV^\infty$, its bounded (respectively, unbounded) complementary component in $\widehat{\C}$ is $\cV^0$ (respectively, $\cV^\infty$). The ideal $12-$gon shown in $\pmb{\cW}_0$ is $\pmb{T}_0=\mathfrak{h}^{-1}(\tilde T)\cap\pmb{\cW}_0$. The map $\pmb{\Phi}_0\colon(\D,0)\to(\pmb{\cW}_0,\pmb{p}_0)$ carries $\Int{\Pi}$ conformally onto $\pmb{T}_0$, where $\pmb{p}_0$ is the unique critical point (of multiplicity $3$) of $\mathfrak{h}\colon\pmb{\cW}_0\to\tilde U$. Further, $\pmb{\Phi}_0$ maps gray subset $\Pi_G$ of $\Pi$ conformally onto the gray subset $\widehat{\pmb{T}}_0$ of $\pmb{T}_0$.

We now construct a generating set for $\pmb{G}_0$.
\begin{enumerate}[leftmargin=8mm]
\item The map $\tau^3 \colon \pmb{\cW}_0\to\pmb{\cW}_0$, that sends one of the brown geodesic edges of $\partial\widehat{\pmb{T}}_0$ to the other, is conjugated to $M_i:\D\to\D$ under $\pmb{\Phi}_0^{-1}$.

\item The map $\eta$ preserves $\pmb{\cW}_1$, and induces an involution of the geodesic $\partial\cV^0\cap\pmb{\cW}_1$. By the equivariance properties of the maps $w\mapsto w^4$, $\mathfrak{X}$, $\mathfrak{h}$, and by the normalization that $\partial\cV^0\cap\pmb{\cW}_j$ is one of the sides of $\partial\widehat{\pmb{T}}_j$, we can deduce that $\pmb{\Phi}_1$ conjugates $g_1\in G$ (shown in top left of the figure) to $\eta\vert_{\pmb{\cW}_1}$. The relation $\pmb{\Phi}_1\equiv \tau\circ\pmb{\Phi}_0$ now implies that $\pmb{\Phi}_0$ conjugates $g_1\in G$ to $\tau^{-1}\circ\eta\circ\tau\in\pmb{G}_0$. As $g_1$ preserves one of the sides of $\Pi_G$, and since $\pmb{\Phi}_0(\Pi_G)=\widehat{\pmb{T}}_0$, it follows that this element of $\pmb{G}_0$ pairs one of the (cyan) geodesics on $\partial\widehat{\pmb{T}}_0$ with itself.

\item Finally, $\eta$ maps $\pmb{\cW}_0$ onto $\pmb{\cW}_2$, and carries $\partial\cV^0\cap\pmb{\cW}_0$ onto $\partial\cV^0\cap\pmb{\cW}_2$.
Hence $\tau^{-2}\circ\eta$ preserves $\pmb{\cW}_0$. Once again, the equivariance properties and normalization mentioned in the previous paragraph imply that $\pmb{\Phi}_2\circ g_2=\eta\circ\pmb{\Phi}_0$ (where the action of $g_2$ is shown in top left of Figure~\ref{corr_group_action_fig}). By the relation $\pmb{\Phi}_2\equiv \tau^2\circ\pmb{\Phi}_0$, we conclude that $\pmb{\Phi}_0$ conjugates $g_2\in G$ to $\tau^{-2}\circ\eta\in\pmb{G}_0$. As $g_2$ maps one of the sides of $\Pi_G$ to another, the element $\tau^{-2}\circ\eta$ of $\pmb{G}_0$ sends the (cyan) geodesic $\partial\cV^0\cap\pmb{\cW}_0$ to another (cyan) geodesic side of $\partial\widehat{\pmb{T}}_0$.
\end{enumerate}

This shows that the generators $\tau^3$, $\tau^{-1}\circ\eta\circ\tau$, and $\tau^{-2}\circ\eta$ of $\pmb{G}_0$ induce side-pairing transformations for the fundamental domain $\widehat{\pmb{T}}_0$, and $\pmb{\Phi}^{-1}$ conjugates these elements to the generators $M_i$, $g_1$, $g_2$ (respectively) of $G$ that are side-pairing transformations for the fundamental domain $\Pi_G$. Hence, $\pmb{\Phi}$ descends to a conformal isomorphism between the orbifolds $\D/G$ and $\pmb{\cW}_0/\pmb{G}_0$.

\begin{question}\label{pre_compactness_qstn}
Let $G_0\in\mathscr G_{\attr}$, $f\in\mathscr K_{\attr}$, and $U$ be the immediate basin of an attracting fixed point of $f$. Consider the collection of holomorphic correspondences 
$$
(z,w) \in \mathfrak{C}_G \ \iff\ \mathfrak{h}_G(w)=\mathfrak{h}_G(\eta(z))
$$ 
obtained by replacing $f\vert_{U}$ with $F_G$, for $G\in\mathrm{Teich}(G_0)$.
Is the family $\{\mathfrak{C}_G\}$ of correspondences, or equivalently the family $\{\mathfrak{h}_G\}$ of meromorphic functions, pre-compact in some natural topology in such a way that the limiting correspondences combine $f$ with Kleinian groups lying on the boundary of the Bers slice of $G_0$ ?
\end{question}
\noindent (See \cite{LMM26} for analogous pre-compactness results for algebraic correspondences.)

\end{document}